\theoremstyle{plain}
\newtheorem{theo}{Theorem}[section]
\newtheorem*{theo*}{Theorem}
\newtheorem{coro}[theo]{Corollary}
\newtheorem{prop}[theo]{Proposition}
\newtheorem{lemm}[theo]{Lemma}
\newtheorem{theomain}{Theorem}
\newtheorem{coromain}[theomain]{Corollary}
\theoremstyle{definition}
\newtheorem{defi}[theo]{Definition}
\newcommand*{\dd}%
  {\relax\ifnum\lastnodetype>0\mskip\medmuskip\fi\mathrm{d}}
\newcommand{\Id}{\operatorname{Id}}
\newcommand{\Hol}{\operatorname{Hol}}
\newcommand{\Ck}{\operatorname{\mathscr{C}}}
\newcommand{\mdim}{\operatorname{mdim}_M}
\newcommand{\mdKS}{\operatorname{mdim}_{KS}}
\newcommand{\udimM}{\operatorname{dim}^+_M}
\newcommand{\ptt}[1]{\mathcal{P}_{#1}}
\newcommand{\prob}{\operatorname{\mathscr{P}}}
\newcommand{\card}{\operatorname{Card}}
\newcommand{\one}{\mathds{1}}
\def\R{\mathbb{R}}
\def\Om{\Omega}
\def\al{\alpha}
\def\se{\subseteq}
\newcommand\lr[1]{\left( #1 \right)}
\newcommand\lrs[1]{\left\{ #1 \right\}}
\title{An invitation to rough dynamics: zipper maps}
\author{Beno\^{\i}t R. Kloeckner \textsuperscript{$*$} \and Nicolae Mihalache \thanks{Univ Paris Est Creteil, Univ Gustave Eiffel, CNRS, LAMA UMR8050, F-94010 Creteil, France} 
}
\begin{document}

\maketitle

\begin{abstract}
In the field of dynamical systems, it is not rare to meet irregular functions, which are typically Hölder but not Lipschitz (e.g. the Weierstrass functions). Our goal is to scratch the surface of the following question: what happens if we consider irregular maps and \emph{iterate} them?

We introduce the family of \emph{zipper maps}, which are irregular in the above sense, and study some of their dynamical properties. For a large set of parameters, the corresponding zipper map admits horseshoe of all orders; as an immediate consequence, every order on $k\ell$ points can be realized by $k$ orbits of length $\ell$ of the map.

These maps have infinite topological entropy, and we refine this statement by showing that they have positive metric mean dimension with respect to the Euclidean metric, as well as by introducing other notions of higher complexity.

Finally, we prove that every interval map (thus including zipper maps) have vanishing absolute metric mean dimension, proving a small case of the conjecture that the absolute metric mean dimension coincides with the topological mean dimension.
\end{abstract}

\setcounter{tocdepth}{1}
\tableofcontents

\section{Introduction}

Dynamical systems form by now an incredibly broad and deep field, with many different kinds of systems having been studied  thoroughly. Even restricting to continuous maps acting on a compact space, one could spend a lifetime learning about them. However, it seems that there is still a blind spot in the literature: maps of low regularity, by which we mean less than locally lipschitz, typically not more than Hölder-continuous. The goal of this article is to showcase  the kind of dynamical properties that one can observe in one specific family of such irregular maps.

\subsection{Zipper maps}

Let us first introduce the family of maps we shall be interested in. Their graph will be a \emph{zipper curve} (hence the name);
as functions, they where introduced by Bruneau \cite{Bruneau1974variation} as extremal points in certain functional spaces;  until now they seem not to have been considered as maps that one can iterate. They have the advantage of being relatively explicit while exhibiting quite wild dynamical properties.

By $\Ck^\alpha([0,1])$ we mean the Banach space of $\alpha$-H\"older functions
defined on $[0,1]$ with values in $\mathbb{R}$ endowed with its usual norm
\[\lVert f\rVert_\alpha = \sup_{x\neq y} \frac{\lvert f(x) - f(y)\rvert}{\lvert x-y\rvert^\alpha} + \sup_x \lvert f(x)\rvert\]
and by $\Ck^\alpha_0$ the convex, closed subset of the functions $f$ with range $[0,1]$ and such that $f(0)=0$ and $f(1)=1$ (which shall then be seen as point-valued maps rather than scalar-valued functions). When $\alpha=0$ we mean the space of continuous functions or maps, endowed with the supremum norm.

Let $p=\big((x_1,y_1),(x_2,y_2)\big)\in(0,1)^2$ be a pair of points in the unit
square such that $x_2>x_1$ and $y_2<y_1$, and let $\Phi_p:\Ck^0_0 \to \Ck^0_0$ be the map defined by
\[\Phi_p f(x) = \begin{dcases*} y_1 f\big(\frac{x}{x_1}\big) & if $x\in[0,x_1]$ \\               
      y_1-(y_1-y_2) f\big(\frac{x-x_1}{x_2-x_1}\big) & if $x\in[x_1,x_2]$ \\
          y_2+(1-y_2) f\big(\frac{x-x_2}{1-x_2}\big) & if $x\in[x_2,1]$
              \end{dcases*} \] 
Then $\Phi_p$ is a contraction in the uniform norm, of ratio \[v_{\max}=\max(y_1,y_1-y_2,1-y_2)<1,\]
and thus has a unique fixed point $Z_p\in \Ck^0_0$, which we shall call the \emph{zipper map} of parameter $p$; the case $p=\big((.3,.7),(.8,.1) \big)$ is shown in Figure \ref{f:zigzag}.

\begin{figure}[htp]
\centering
\includegraphics[width=.5\linewidth]{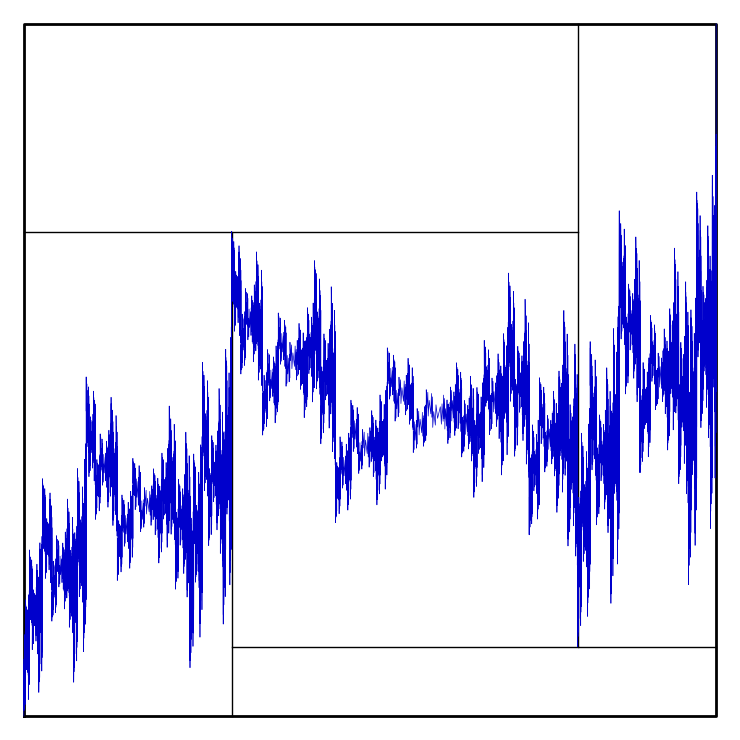}
\caption{The graph of a zipper map. Note that $1$ is mapped to $1$, which is barely visible given the extremely high speed of variation at that point.}\label{f:zigzag}
\end{figure}

We will further restrict to the \emph{hypersensitive} case where the (piecewise affine) image of the identity map by $\Phi_p$ has all its slopes greater than $1$ in absolute value:
\begin{equation}1 < \lambda_{\min} := \min\Big(\frac{y_1}{x_1}, \frac{y_1-y_2}{x_2-x_1}, \frac{1-y_2}{1-x_2}\Big)
\label{eq:lambdamin}
\end{equation}

\subsection{Main results}

Topological entropy is one of the paradigmatic measurement of chaos, and shifts on finite alphabets are among the most basic models for chaotic maps; as we will recall below, for continuous interval maps they are strongly related through ``horseshoes''. Our first result shows that many zipper maps exhibit a strong form of chaos by having horseshoes of arbitrarily order.

\begin{theomain}\label{thm:horseshoes}
Let $T=Z_p$ be an hypersensitive zipper map and additionally assume either one of these conditions:
\begin{enumerate}
\item\label{enumi:horse1} $p$ is symmetric with respect to the center of the square, i.e. $x_1+x_2=y_1+y_2=1$,
\end{enumerate}
or
\begin{enumerate}[resume]
\item\label{enumi:horse2} the pair of second coordinates $(y_1,y_2)$ of $p$ lies in the open set
\[B = \{(y_1,y_2)\in(0,1)^2 \mid y_1^2>y_2 \text{ and } y_1>(2-y_2)y_2 \}.\]
\end{enumerate}
Then $T$ admits horseshoes of all orders.
\end{theomain}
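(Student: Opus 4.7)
The plan is to exploit the self-similar structure of $Z_p$ encoded by its fixed-point equation $Z_p=\Phi_p Z_p$. Writing $\sigma_1,\sigma_2,\sigma_3$ for the affine parametrizations $[0,1]\to[0,x_1],[x_1,x_2],[x_2,1]$ and $\tau_1,\tau_2,\tau_3$ for those $[0,1]\to[0,y_1],[y_2,y_1],[y_2,1]$ (with $\tau_2$ orientation-reversing), this equation becomes the conjugation $Z_p\circ\sigma_i=\tau_i\circ Z_p$ for $i=1,2,3$. A horseshoe of order $n$ consists of $n$ pairwise disjoint closed subintervals $I_1,\dots,I_n\subseteq[0,1]$ together with an iterate $m$ such that $Z_p^m(I_j)\supseteq I_1\cup\cdots\cup I_n$ for each $j$. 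Because $Z_p$ is nowhere monotone (each branch being itself a fractal copy of $Z_p$), one cannot use a classical partition into monotone branches; fortunately, the surjectivity-style definition of a horseshoe does not require monotonicity, which is the key flexibility.

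The heart of the proof is a base case: constructing a horseshoe of some order $\geq 2$. In case (i), the involution $x\mapsto 1-x$ commutes with $Z_p$ and fixes $1/2$, so it suffices to exhibit a single interval $I_1\subseteq[0,1/2]$ and an iterate $m$ with $Z_p^m(I_1)\supseteq I_1\cup(1-I_1)$, after which $\{I_1,1-I_1\}$ is the desired horseshoe by symmetry. Hypersensitivity in the symmetric setting forces $y_1>x_2$ (from the middle-branch slope inequality), which one uses to show that iteration of $Z_p$ on an interval near $0$ eventually crosses $1/2$. In case (ii), the two inequalities translate directly into
\[\tau_1^2([0,1])=[0,y_1^2]\ni y_2\qquad\text{and}\qquad\tau_3^2([0,1])=[y_2(2-y_2),1]\ni y_1,\]
meaning that after two iterations in image space the outer branches reach into the central image $\tau_2([0,1])=[y_2,y_1]$. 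Pulling these containments back through the iterated conjugation $Z_p\circ\sigma_i^2=\tau_i^2\circ Z_p$ shows that $Z_p$ maps the small subintervals $\sigma_1^2([0,1])$ and $\sigma_3^2([0,1])$ (near $0$ and near $1$) onto intervals reaching deep into the middle image region, from which one extracts a base horseshoe of order $2$.

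Once a base horseshoe $\{I_1,I_2\}$ with $Z_p^m(I_i)\supseteq I_1\cup I_2$ is in hand, the passage to any order $n$ is a standard bootstrap: by the intermediate value theorem one selects, inside each $I_i$, disjoint compact subintervals $K_{i,j}$ with $Z_p^m(K_{i,j})=I_j$; then $\{K_{i,j}\}_{i,j\in\{1,2\}}$ is a horseshoe of order $4$ for $Z_p^{2m}$, because $Z_p^{2m}(K_{i,j})=Z_p^m(I_j)\supseteq I_1\cup I_2\supseteq\bigcup_{i',j'}K_{i',j'}$. Iterating the refinement $k$ times yields horseshoes of order $2^{2^k}$, and any desired order $n$ is recovered by discarding excess intervals.

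The main technical obstacle I foresee is verifying the key image inclusions under $Z_p$ in the absence of monotonicity: computing $Z_p^m$ on a chosen interval amounts to tracking its fractal oscillations. The saving grace is that we need only topological containments, not full geometric descriptions of images, and conditions (i) and (ii) are tailored precisely to provide the critical inclusions that seed the base horseshoe, while hypersensitivity $\lambda_{\min}>1$ ensures that the relevant intervals remain macroscopic rather than degenerating.
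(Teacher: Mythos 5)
Your proposal misstates the theorem and consequently proves only a strictly weaker statement. You define a horseshoe of order $n$ to be a family of $n$ disjoint intervals together with \emph{some iterate} $m$ such that $Z_p^m(I_j)\supseteq\bigcup_i I_i$. The paper's conclusion is much stronger: the intervals must satisfy $T(I_i)\supset I_j$ for $T=Z_p$ \emph{itself}, with no iteration allowed. The paper is explicit that this distinction is the point of the theorem: by a result of Misiurewicz, having horseshoes of arbitrary order in iterates $T^{o(\log k)}$ is \emph{equivalent} to infinite topological entropy, which the authors remark is ``easy to establish directly for all hypersensitive zipper maps.'' Your bootstrap, even granted a sound base case, produces horseshoes of order $n^2$ for $T^{2m}$ from a horseshoe of order $n$ for $T^m$, and hence only ever yields horseshoes for ever-higher iterates; it can never concentrate all $k$ intervals under a single application of $T$. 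The paper instead builds, for each $k$, a single family of words $\omega$ (obtained by zooming in along the diagonal with $P_{\omega_0}^{-1}$ until it becomes nearly horizontal and $\varepsilon$-transverse to the unit square, then appending $\{0,2\}$-words using the covering Proposition on the vertical IFS) such that all $I_\omega$ are pairwise non-overlapping and every $J_\omega=T(I_\omega)$ contains all of them simultaneously.

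A secondary problem is that your base case in case (ii) is not actually established. You correctly translate the inequalities defining $B$ as $\tau_1^2([0,1])=[0,y_1^2]\ni y_2$ and $\tau_3^2([0,1])=[y_2(2-y_2),1]\ni y_1$, i.e.\ the two outer double-images overlap the middle image. But the candidate intervals $\sigma_1^2([0,1])=[0,x_1^2]$ and $\sigma_3^2([0,1])=[x_2(2-x_2),1]$ sit at the two extreme ends of $[0,1]$, whereas their images under $Z_p$ only ``reach into the middle''; neither image contains the interval at the \emph{opposite} end, so $\{\sigma_1^2([0,1]),\sigma_3^2([0,1])\}$ is not a horseshoe of order $2$ in your own sense. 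Extracting a genuine horseshoe from these overlap properties is exactly where the real work lies, and the paper's mechanism for this (the transversality Lemma and the covering Proposition for the vertical IFS generated by $V_0$ and $V_2$) is not reproduced or replaced by anything in your sketch. The symmetric case has a similar gap: you assert that iterating on a small interval near $0$ ``eventually crosses $1/2$'' but do not produce the specific interval and iterate, whereas the paper's symmetric argument zooms into the fixed point $(\tfrac12,\tfrac12)$ via the central letter $1$ and chooses $n,\eta$ to make the range of the renormalised diagonal sit inside all relevant $J_{1^\ell i}$.
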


\begin{figure}[htp]
\centering
\includegraphics[width=.5\linewidth]{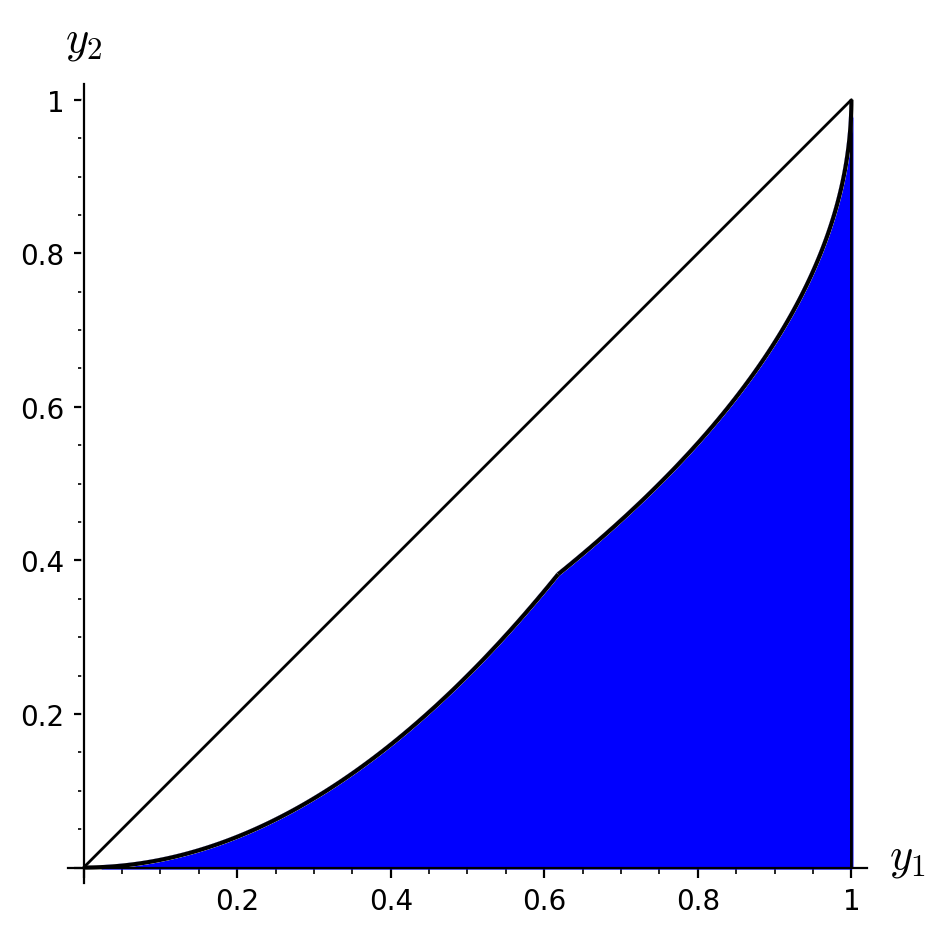}
\caption{The domain $B$ for assumption \ref{enumi:horse2} in Theorem \ref{thm:horseshoes}.}
\end{figure}

The conclusion means that for all $k\in\mathbb{N}$ there exist compact sub-intervals $I_1,\dots,I_k$ of $[0,1]$ with pairwise disjoint interiors, such that $T(I_i)\supset I_j$  for all $i,j\in\{1,\dots, k\}$. We can also get pairwise disjoint intervals, simply by taking $2k$ instead of $k$ and keep every other interval in the family, in the linear order of $[0,1]$, see Section \ref{s:orders}.

It is known \cite{Misiurewicz1979horseshoes} (see also \cite{Ruette2017chaos} and references therein) that having infinite topological entropy is equivalent to having  horseshoes of arbitrary size $k$ in iterates $T^{o(\log k)}$; Theorem \ref{thm:horseshoes} show that some zipper map possess a much stronger property, in that we do not need to iterate them to obtain horseshoes of all order.

The presence of these horseshoes easily imply some ``universality'' properties. 

\begin{coromain}\label{cm:orders}
For all hypersensitive zipper map $T=Z_p$ satisfying either assumptions of Theorem \ref{thm:horseshoes} and all $k,\ell\in\mathbb{N}$, every total order on $k\times\ell$ symbols is realised by $k$ orbits of length $\ell$ of $T$.
\end{coromain}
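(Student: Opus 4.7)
The plan is to reduce directly to Theorem \ref{thm:horseshoes} by taking a horseshoe of order $n := k\ell$. Let $I_1,\dots,I_n$ denote such a horseshoe, relabeled so that the intervals appear from left to right in $[0,1]$ in the order of their indices. A total order on $k\ell$ symbols, which I identify with the set of pairs $(i,j)\in\{1,\dots,k\}\times\{1,\dots,\ell\}$, is encoded by a bijection $\sigma:\{1,\dots,k\}\times\{1,\dots,\ell\}\to\{1,\dots,n\}$ that sends $(i,j)$ to its position in the prescribed order.

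For each $i\in\{1,\dots,k\}$ separately, the itinerary $a_0^i,a_1^i,\dots,a_{\ell-1}^i$ defined by $a_{j-1}^i:=\sigma(i,j)$ can be realised by the orbit of some point $x_i$: since $T(I_a)\supset I_b$ for every pair of indices, one builds by the classical backward induction a nested sequence of compact sub-intervals $I_{a_0^i}\supset J_1\supset J_2\supset\dots\supset J_{\ell-1}$ such that $T^j(J_j)=I_{a_j^i}$ for each $j$, and any $x_i\in J_{\ell-1}$ then has the desired itinerary. A minor preliminary adjustment---shrinking each $I_m$ to a slightly smaller sub-interval contained in its interior, which still forms a horseshoe thanks to the strictness of the inclusions $T(I_a)\supset I_b$---lets me arrange that each $T^{j-1}(x_i)$ lies in the \emph{interior} of $I_{\sigma(i,j)}$, avoiding any boundary ambiguity.

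Since $\sigma$ is a bijection, the $k\ell$ points $\{T^{j-1}(x_i)\mid i,j\}$ lie in $k\ell$ distinct intervals with pairwise disjoint interiors; they are therefore all distinct (so we obtain $k$ genuinely different orbits of length $\ell$) and their linear order on $[0,1]$ coincides with the order prescribed by $\sigma$. There is no substantial obstacle in this argument: the entire content sits in the existence of horseshoes of arbitrary order, and this corollary is little more than a repackaging of that strength of Theorem \ref{thm:horseshoes}.
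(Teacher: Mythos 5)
Your overall strategy is cleaner than the paper's and essentially correct: assign to each symbol its rank under $\prec$, use a horseshoe with one interval per symbol, and realize each orbit's itinerary by the standard nested-intersection argument; since the $k\ell$ intervals are all distinct and ordered left to right, the order of the $T^j(x_i)$'s automatically matches $\prec$. The paper achieves the same thing via an inductive "spacing" construction of maps $\alpha_j:\{1,\dots,k\}\to\{1,\dots,N\}$, which is more elaborate but proves nothing more; your rank-based assignment is a genuine simplification of that part of the argument.

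However, your proposed fix for the boundary issue is flawed. You claim that shrinking each $I_m$ to a compact sub-interval $I'_m\subset\operatorname{int}(I_m)$ "still forms a horseshoe thanks to the strictness of the inclusions $T(I_a)\supset I_b$." This is not true in general. The inclusion $T(I_a)\supset I_b$ can be achieved only at the very endpoints of $I_a$, and with a local expansion rate bigger than $1$; then \emph{any} strict shrinking destroys the covering. Concretely, if $I_1=[0,0.1]$ and $T$ is the tent map $T(x)=20x$ on $[0,0.05]$, $T(x)=2-20x$ on $[0.05,0.1]$, then $T(I_1)=[0,1]$, but for any $\delta>0$ the shrunk interval $I'_1=[\delta,0.1-\delta]$ has $T(I'_1)=[20\delta,1]$, which fails to contain $I'_1$ since $20\delta>\delta$. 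So the shrunk family need not be a horseshoe, no matter how small the shrinking.

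The paper sidesteps this with a different, robust trick: take a horseshoe of order $2N$, relabel the intervals in increasing order, and keep only the even-indexed ones. The resulting $N$ intervals are pairwise \emph{disjoint} (not merely with disjoint interiors) and still form a horseshoe, so distinct symbols genuinely yield distinct iterate values and the boundary ambiguity never arises. Replace your shrinking step by this "drop every other interval" step and your argument goes through.
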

This means that for all total strict order $\prec$ on the symbols $(s_i^j)_{\substack{0\le j\le \ell \\ 1\le i\le k}}$, there exist $x_1,\dots , x_k\in [0,1]$ such that for all $i,j,i',j'$:
\[ s_i^j \prec s_{i'}^{j'} \Leftrightarrow T^jx_i < T^{j'}x_{i'}.\]

Let $\Omega \se \R$, $S:\Om\to\Om$ and $T:[0,1]\to[0,1]$. We say that $S$ is \emph{embedded} in $T$ (as a dynamical system) if there exists $\pi:\Om\to[0,1]$, injective, such that
\[
\pi\circ S=T\circ \pi.
\]
\begin{coromain}\label{cm:universality}
Let  $T=Z_p$  be a  hypersensitive zipper map satisfying either assumption of Theorem \ref{thm:horseshoes} and $\Om$ a finite set. Then any map $S:\Omega\to\Omega$ is embedded in $T$.
\end{coromain}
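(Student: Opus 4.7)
The plan is to use Theorem~\ref{thm:horseshoes} to extract a horseshoe of order $\card(\Om)$ whose intervals are pairwise disjoint and can therefore be indexed by $\Om$, to realize each $S$-cycle as a $T$-periodic orbit visiting the corresponding intervals in the right order, and to extend $\pi$ to the remaining elements of $\Om$ by iterated preimages along the functional graph of $S$.

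More precisely, set $n=\card(\Om)$ and apply Theorem~\ref{thm:horseshoes} with $2n$ in place of $k$; keeping every other interval, as explained in the paragraph following the theorem, yields pairwise disjoint compact subintervals $(I_a)_{a\in\Om}$ indexed by $\Om$, still satisfying $T(I_a)\supset I_b$ for every $a,b\in\Om$. The map $\pi$ I construct will satisfy $\pi(a)\in I_a$ for every $a$, and injectivity will then be automatic from the pairwise disjointness.

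Next, the functional graph of $S$ decomposes into connected components, each containing a unique cycle $C=(c_0,\dots,c_{p-1})$ with $S(c_j)=c_{j+1\bmod p}$. I realize such a cycle as a genuine $T$-periodic orbit with the prescribed itinerary through the classical nested intervals argument: set $J_p=I_{c_0}$ and, recursively, use the continuity of $T$ on $I_{c_k}$ together with the covering $T(I_{c_k})\supset I_{c_{k+1}}\supset J_{k+1}$ to extract a closed subinterval $J_k\subset I_{c_k}$ with $T(J_k)=J_{k+1}$. Then $T^p(J_0)=I_{c_0}\supset J_0$ provides a fixed point $x_0$ of $T^p$ in $J_0$ by the intermediate value theorem, and setting $\pi(c_j)\eqdef T^j(x_0)$ places each $\pi(c_j)$ in $I_{c_j}$ (the pairwise disjointness of the intervals automatically forces the minimal period of $x_0$ under $T$ to be exactly $p$).

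Every remaining $a\in\Om$ eventually enters a cycle under iteration of $S$, so I extend $\pi$ by induction on the distance from $a$ to its cycle: once $\pi(S(a))\in I_{S(a)}$ has been defined, the inclusion $T(I_a)\supset I_{S(a)}\ni\pi(S(a))$ combined with the continuity of $T$ on $I_a$ provides at least one preimage of $\pi(S(a))$ in $I_a$, which I take as $\pi(a)$. The identity $\pi\circ S=T\circ\pi$ then holds by construction, and injectivity follows from $\pi(a)\in I_a$. The only substantive step is the Markov-like shadowing used to realize each cycle itinerary; it depends solely on the covering relations $T(I_a)\supset I_b$ already furnished by Theorem~\ref{thm:horseshoes}, so no further property of $T$ enters the argument.
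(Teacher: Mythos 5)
Your proof is correct and follows essentially the same route as the paper: decompose the functional graph of $S$ into cycles and pre-periodic trees, realize each cycle as a genuine $T$-periodic orbit via the standard nested-interval covering argument, and then extend $\pi$ to pre-periodic points by pulling back preimages inside the prescribed intervals. The one place where you are more careful than the paper's write-up is the explicit use of a horseshoe of order $2\lvert\Omega\rvert$ (keeping every other interval) so that the $I_a$ are \emph{pairwise disjoint} rather than merely of pairwise disjoint interiors; this is exactly what makes injectivity automatic from $\pi(a)\in I_a$, and the paper only alludes to this trick in the remark following Theorem~\ref{thm:horseshoes} rather than invoking it in the proof itself. The remaining details (the Markov shadowing of the itinerary, the observation that pairwise disjointness forces minimal period $p$) are standard and match the paper's intent.
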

The existence of horseshoe of arbitrary order implies that zipper maps to which Theorem \ref{thm:horseshoes} applies have infinite topological entropy, a fact easy to establish directly for all hypersensitive zipper maps. But we can say more, by using a variation of entropy suitable for highly chaotic systems: metric mean dimension, whose definition is recalled in Section \ref{s:meanDim}.
\begin{theomain}\label{t:mdim}
Every hypersensitive zipper map $T=Z_p$ has positive metric mean dimension relative to the Euclidean metric:
\[\mdim(T,\lvert\cdot\rvert) \ge \frac{\log \lambda_{\min}}{\lvert\log h_{\min}\rvert}>0,\]
where $h_{\min} = \min(x_1,x_2-x_1,1-x_2)<1$ and $\lambda_{\min}$ is defined in \eqref{eq:lambdamin}.
\end{theomain}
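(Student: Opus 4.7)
The plan is to exploit the self-similarity of $T$ to produce, at each scale $\varepsilon_n \sim h_{\min}^n$, an exponentially (in $m$) growing family of $(m,\varepsilon_n)$-separated orbits, then divide $h(T,\varepsilon_n)$ by $\lvert\log \varepsilon_n\rvert$ to conclude. The key structural input is the renormalization identity $T\circ\phi_k = \psi_k\circ T$ for $k\in\{0,1,2\}$ (a direct consequence of $T = \Phi_p T$), where $\phi_k, \psi_k$ are the natural affine maps onto $I_k$ (of length $h_k$) and $J_k$ (of length $v_k$); iterating it shows that for each word $\omega = (k_1,\ldots,k_n) \in \{0,1,2\}^n$, the interval $I_\omega := \phi_{k_1}\circ\cdots\circ\phi_{k_n}([0,1])$ (of length $\prod_i h_{k_i}$) is mapped by $T$ onto $J_\omega := \psi_{k_1}\circ\cdots\circ\psi_{k_n}([0,1])$ (of length $\prod_i v_{k_i}$), and $T|_{I_\omega}$ is affinely conjugate to $T:[0,1]\to[0,1]$.

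Fix any branch $k^\star \in \{0,1,2\}$ and consider the pure-branch interval $U := I_{(k^\star)^n}$ of length $\varepsilon_n := h_{k^\star}^n$; its image $J := T(U)$ has length $v_{k^\star}^n = \lambda_{k^\star}^n \varepsilon_n$. Divide $J$ into $N$ sub-intervals $J_1,\dots,J_N$ that are pairwise $\varepsilon_n$-separated, with $N$ of order $\lambda_{k^\star}^n$. For each itinerary $(\alpha_1,\ldots,\alpha_m)\in\{1,\ldots,N\}^m$, one seeks $x\in U$ with $T^j(x) \in J_{\alpha_j}$ for every $j$. Producing such a point for each itinerary would yield $N^m$ many $(m,\varepsilon_n)$-separated orbits, so
\[
\mdim(T,\lvert\cdot\rvert) \;\geq\; \frac{\log\lambda_{k^\star}}{\lvert\log h_{k^\star}\rvert} \;\geq\; \frac{\log\lambda_{\min}}{\lvert\log h_{\min}\rvert},
\]
where the last inequality holds for every branch since $\lambda_{k^\star} \geq \lambda_{\min}$ and $\lvert\log h_{k^\star}\rvert \leq \lvert\log h_{\min}\rvert$ (both $h_{k^\star},h_{\min}\in(0,1)$).

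The main obstacle is constructing the orbits realizing the prescribed itineraries: after the first iterate the orbit leaves $U$ and moves freely over $[0,1]$, so the next target $J_{\alpha_2}$ is not automatically reachable from an arbitrary point of $J_{\alpha_1}$. One approach is to exhibit a genuine Markov horseshoe of $N$ branches at scale $\varepsilon_n$, which is substantially simpler than the horseshoe of Theorem~\ref{thm:horseshoes} because only a fixed number of branches is required at a single scale and no uniformity across scales is needed. Alternatively, one can directly bound from below the number of monotone pieces of $T^m$ of variation at least $\varepsilon_n$ by iterating the self-similar decomposition and using the surjectivity $T([0,1]) = [0,1]$ (which follows from $T(0)=0$, $T(1)=1$, and continuity).
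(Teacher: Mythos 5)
Your plan stalls exactly at the point you flag yourself: you need orbits that realize arbitrary itineraries through the sub-intervals $J_1,\dots,J_N$ of $J = T(U)$, but once the orbit leaves $U$ it is somewhere in $J\subset[0,1]$, and nothing constrains $T$ to carry $J_{\alpha_1}$ over $J_{\alpha_2}$. Neither of your two suggested repairs closes this. Claiming a Markov horseshoe of $N\sim\lambda_{k^\star}^n$ branches at scale $\varepsilon_n$ is not obviously simpler than Theorem~\ref{thm:horseshoes}: that theorem only produces horseshoes of all orders under extra hypotheses on $p$ (symmetry or $(y_1,y_2)\in B$), whereas the statement to be proved is for \emph{every} hypersensitive zipper map, and the theorem gives no quantitative link between the order of the horseshoe and the diameter of its intervals, which is exactly what the mean-dimension estimate needs. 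The alternative suggestion, counting monotone pieces of $T^m$ of variation $\ge\varepsilon_n$, is also left entirely unexecuted and the surjectivity of $T$ alone will not produce a lower bound growing like $N^m$.

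The paper takes a structurally different and simpler route (Theorem~\ref{t:mdim-general}): instead of zooming into a single small piece $U=I_{(k^\star)^n}$, it partitions \emph{all} of $[0,1]$ into $k\approx 1/(2\varepsilon)$ intervals $I_1,\dots,I_k$ of length $\approx 2\varepsilon$, with centered sub-intervals $I'_j$ of length $\varepsilon$. Hypersensitivity gives $|T(I'_j)|\ge C\varepsilon^\beta$, so $T(I'_j)$ contains $\gtrsim\varepsilon^{\beta-1}$ of the tiles $I_\ell$, i.e.\ the covering graph ($j\to\ell$ iff $T(I'_j)\supset I_\ell$) has large out-degree at every vertex. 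Because the $I_\ell$ tile $[0,1]$, the intermediate value theorem realizes every directed path by an actual orbit passing through the prescribed $I'_{j_i}$, and distinct paths give $(n,\varepsilon)$-separated points. Counting paths gives $\mdim(T,|\cdot|)\ge 1-\beta$, and plugging in $\beta = 1+\log\lambda_{\min}/\log h_{\min}$ from Proposition~\ref{p:hypersensitivity} yields the stated bound. The crucial difference from your approach is that working with a global tiling makes the itinerary-realization step automatic, whereas working inside one renormalization cell does not. Note also that the paper's argument applies to any $\beta$-hypersensitive interval map, with no zipper-specific self-similarity needed.
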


\begin{coromain}\label{c:KS}
Every hypersensitive zipper map $T=Z_p$ admits an invariant probability measure $\mu$ with positive Kolmogorov-Sinai mean dimension.
\end{coromain}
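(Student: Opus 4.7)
The strategy is to promote the topological mean-dimension lower bound of Theorem \ref{t:mdim} to a measure-theoretic one by combining a variational principle at each scale with the self-similar structure of $Z_p$.

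I would begin with the scale-by-scale variational principle: for every continuous map of a compact metric space and every $\eps>0$, the topological $\eps$-entropy $h_{\text{top}}(T,\eps)$ equals the supremum over invariant measures $\mu$ and measurable partitions $\mathcal{P}$ of diameter at most $\eps$ of $h_\mu(T,\mathcal{P})$, a direct byproduct of the Misiurewicz / Goodman--Goodwyn proof of the variational principle. Combined with Theorem \ref{t:mdim}, this yields, along some sequence $\eps_n\to 0$, invariant measures $\mu_n$ and partitions $\mathcal{P}_n$ of diameter at most $\eps_n$ with
\[\frac{h_{\mu_n}(T,\mathcal{P}_n)}{\lvert\log \eps_n\rvert} \;\geq\; \frac{\log \lambda_{\min}}{\lvert\log h_{\min}\rvert} - o(1).\]

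To replace this scale-dependent family by a single invariant $\mu$, I would exploit the functional equation $\Phi_p Z_p = Z_p$: each branch interval $J_i$ ($i=1,2,3$) carries an affinely rescaled copy of $T$, and iterating gives a Markov-like family $\{J_w\}_{w\in\{1,2,3\}^k}$ with $\diam(J_w)\leq h_{\min}^k$. Starting with any invariant measure $\mu_0$ of positive metric entropy for $T$ (which exists since $h_{\text{top}}(T) = +\infty$, using for instance any Misiurewicz horseshoe), I would spread affinely rescaled copies of $\mu_0$ onto each $J_w$ and average with weights tuned to the piece widths, so that the result is $T$-invariant by the functional equation. The partition entropies of $\mu$ at scale $h_{\min}^k$ then grow at least linearly in $k$ with rate $\log\lambda_{\min}$, yielding
\[\mdKS(T,\mu) \;\geq\; \frac{\log\lambda_{\min}}{\lvert\log h_{\min}\rvert} \;>\; 0.\]

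The main obstacle is the gluing step: the images $T(J_i)$ overlap rather than tile $[0,1]$, so the renormalisation is not a clean Bernoulli self-similarity. One resolution is to base the construction on the horseshoes of Theorem \ref{thm:horseshoes} when those apply; another, valid for all hypersensitive parameters, is to set up the measure directly on the symbolic codings underlying the proof of Theorem \ref{t:mdim} and to push it forward by the natural coding map into $[0,1]$. In either case, the self-similar functional equation is what ensures compatibility of scales and the $T$-invariance of the limit measure.
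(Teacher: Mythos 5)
The paper's proof is substantially shorter and avoids the construction you are attempting. Once Theorem~\ref{t:variational} (the half variational principle) is in hand, Corollary~\ref{c:KS} follows in one line: apply it to $d=\lvert\cdot\rvert$, for which $\udimM([0,1],\lvert\cdot\rvert)=1$, and combine with the lower bound $\mdim(T,\lvert\cdot\rvert)\ge\frac{\log\lambda_{\min}}{\lvert\log h_{\min}\rvert}>0$ of Theorem~\ref{t:mdim}. Your first step (scale-by-scale variational principle, measures $\mu_n$ with high entropy relative to partitions of diameter $\eps_n$) is essentially the Misiurewicz--Katok construction the paper uses inside the proof of Theorem~\ref{t:variational}, so that part is on track. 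But note that what makes the quantity behave like a \emph{dimension} (rather than just $\infty$-entropy) is the normalization by the Minkowski dimension of the metric: a partition of $([0,1],\lvert\cdot\rvert)$ of mesh $\eps$ can be taken to have roughly $\eps^{-1}$ atoms, so the count $k$ of atoms and the scale $\eps$ are tied together. That coupling is the whole content of Theorem~\ref{t:variational}, and it replaces everything you try to do in your second step.

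Your second step has a genuine flaw. The functional equation $\Phi_p Z_p=Z_p$ expresses a self-similarity of the \emph{graph} of $Z_p$, not a dynamical renormalisation. Concretely, $Z_p\vert_{I_i}=V_i\circ Z_p\circ H_i^{-1}$ is an affine copy of $Z_p$ sending $I_i$ to $J_i$, but $I_i\ne J_i$, neither is $T$-invariant, and the orbit of a point in $I_i$ leaves $I_i$ after one step. So ``spreading rescaled copies of $\mu_0$ onto each $J_w$ and averaging'' does not produce a $T$-invariant measure --- invariance would require the dynamics to respect the nested hierarchy $\{I_\omega\}$, which it does not. The obstacle is therefore not, as you say, that the images $T(J_i)$ overlap instead of tiling $[0,1]$; it is that no such renormalisation of the dynamics exists in the first place. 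Your two fallbacks do not repair this: Theorem~\ref{thm:horseshoes} requires additional hypotheses on $p$ while Corollary~\ref{c:KS} is asserted for \emph{all} hypersensitive zipper maps, and ``the symbolic codings underlying the proof of Theorem~\ref{t:mdim}'' do not come with a shift-invariant measure whose pushforward is automatically $T$-invariant. You should discard the renormalisation idea entirely and instead isolate the variational inequality (Theorem~\ref{t:variational}) as the tool that converts a positive relative metric mean dimension over a finite-Minkowski-dimension metric into a measure with positive Kolmogorov--Sinai mean dimension.
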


The \emph{Kolmogorov-Sinai mean dimension} is an invariant of a measured dynamical system $(T,\mu)$ which we introduce in Section \ref{s:meanDim}. Its positivity means that for arbitrarily high $k$, there exist measurable partitions of $[0,1]$ into $k$ subsets for which the entropy grows as a multiple of $\log k$ as $k\to\infty$. Corollary \ref{c:KS} is deduced from Theorem \ref{t:mdim} through an inequality providing one half of a variational principle, Theorem \ref{t:variational}: the existence of a metric of finite dimension for which the relative metric mean dimension is positive enables the construction of measures of positive Kolmogorov-Sinai metric mean dimension. 
However, unlike the classical variational principle, the complexity of these measures does not bound below the metric mean dimension of arbitrary metrics, even when controlling their dimension:
\begin{theomain}\label{t:vanishing}
Every continuous map $T:[0,1]\to[0,1]$ has zero absolute metric mean dimension; more precisely, there exist a metric $d$ on $[0,1]$, inducing the usual topology, such that $\mdim(T,d) =0$ and $\udimM([0,1],d)=1$.
\end{theomain}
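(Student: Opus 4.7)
The plan is to construct explicitly a metric $d$ on $[0,1]$, compatible with the usual topology, satisfying $\udimM([0,1],d)=1$ and $\mdim(T,d)=0$, by a multi-scale modification of the Euclidean metric. The starting observation is that since $T$ is continuous on the compact interval, the covering bound $s(n,\delta)\le N(\delta/2,|\cdot|)^n$ ensures that the Bowen entropy $h(T,|\cdot|,\delta)$ is finite for every $\delta>0$. Writing $H_k:=h(T,|\cdot|,2^{-k})$ gives a non-decreasing sequence of finite numbers (which may diverge when $h_{top}(T)=\infty$).

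Choose target scales $\epsilon_k\searrow 0$ decreasing so rapidly that $H_k=o(|\log\epsilon_k|)$; for instance $\epsilon_k:=2^{-k(H_k+1)}$ yields $H_k/|\log\epsilon_k|\le 1/k\to 0$. I would then design $d$ so that at each of these special scales, any $(n,\epsilon_k)$-$d$-separated set is automatically $(n,2^{-k})$-Euclidean-separated. This implication would yield $s_d(n,\epsilon_k)\le s_{|\cdot|}(n,2^{-k})$, hence $h(T,d,\epsilon_k)\le H_k$, and therefore $\mdim(T,d)\le\liminf_k H_k/|\log\epsilon_k|=0$. A crucial point is that $d$ cannot be of the scalar form $\phi(|x-y|)$: subadditivity of $\phi$ with $\phi(0)=0$ forces $\phi(t)\ge ct$ at small $t$ for some $c>0$, ruling out the necessary local compression of small Euclidean distances. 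Instead, the construction uses a genuine two-variable distortion, typically a path-metric on $[0,1]$ in which the arclength is locally deflated inside a carefully chosen nested family of "collapse windows" adapted to the target scales.

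The verifications then conclude the argument: $d$ is shown to induce the Euclidean topology (by the summability of the total deflation); the Bowen bound above yields $\mdim(T,d)=0$; and the sparsity of the collapse windows ensures that $N(\epsilon,d)$ remains of order $1/\epsilon$ along a cofinal set of scales, giving $\udimM([0,1],d)=1$. The main obstacle I expect to be the delicate combinatorial engineering of the windows: they must be dense enough at each scale $2^{-k}$ to force the Bowen-separation implication, yet sparse and well-separated across scales so as not to cascade into a uniform collapse of the metric that would depress $\udimM$ below $1$. Preserving the triangle inequality globally while enforcing the required implication at the countably many target scales is the technical heart of the proof.
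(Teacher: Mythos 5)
Your central reduction is unachievable and, worse, contradicts the second half of the theorem. You propose to design $d$ so that every $(n,\epsilon_k)$-$d$-separated set is $(n,2^{-k})$-Euclidean-separated, with $\epsilon_k$ chosen so small that $H_k=o(\lvert\log\epsilon_k\rvert)$. Applying this with $n=1$ gives the static statement that any $\epsilon_k$-$d$-separated set is $2^{-k}$-Euclidean-separated, hence
\[
N\bigl([0,1],d,\epsilon_k\bigr)\;\le\;N\bigl([0,1],\lvert\cdot\rvert,2^{-k}\bigr)\;=\;O(2^{k}).
\]
Since $N([0,1],d,\cdot)$ is monotone, for every $\eta\in[\epsilon_{k+1},\epsilon_k]$ we then have $N([0,1],d,\eta)\le N([0,1],d,\epsilon_{k+1})=O(2^{k+1})$, while $\log\tfrac1\eta\ge\log\tfrac1{\epsilon_k}\gg k$. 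Hence
\[
\frac{\log N([0,1],d,\eta)}{\log\frac1\eta}\le \frac{(k+1)\log 2}{\log\frac1{\epsilon_k}}\xrightarrow[k\to\infty]{}0
\]
uniformly on $[\epsilon_{k+1},\epsilon_k]$, so $\udimM([0,1],d)=0$, not $1$. The ``collapse windows'' remark does not help: the quantifier in your implication runs over \emph{all} pairs, so the compression it enforces is global, not local, and the covering number constraint at the chosen scales is inescapable. (Your observation that a radial $\phi(\lvert x-y\rvert)$ cannot work is correct, but it is a side point.)

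The actual proof does not compare $d$-entropy to Euclidean entropy at a compressed scale. It takes $d(x,y)=\lvert h(x)-h(y)\rvert$ for an explicit homeomorphism $h$ built by an iterated function system alternating sharp expansions and sharp contractions, and tuned via conditions \eqref{e:s2}--\eqref{e:s3} to the modulus of continuity of $T$ (and its first $k$ iterates on intervals of size $\lesssim s_k$). The cover $\mathcal{I}_\varepsilon$ by intervals of $d$-diameter $\varepsilon$ then contains $\Theta(1/\varepsilon)$ intervals (so $\udimM=1$), but those intervals have wildly varying Euclidean lengths, stratified by ``depth''. The entropy bound comes from a combinatorial itinerary count (Lemmas~\ref{lemm:cardL}, \ref{lemm:orbitBound}, Corollary~\ref{coro:orbitBound}): once an orbit visits a very short (deep) interval, the choice of subsequent intervals is sharply constrained for a number of steps comparable to the depth. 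Summing over the possible depth/label sequences yields $\mdim(T,d)=0$ without ever shrinking the global covering number. If you wish to salvage your plan, you would need to give up the uniform separation implication and instead argue locally, at which point you are essentially forced into a depth-stratified counting argument like the paper's.
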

Here $\udimM$ denotes the upper Minkowski dimension; it bounds from above the Hausdorff dimension, which must thus also be equal to $1$. This result shows a small case of the conjecture (see e.g. \cite{lindenstrauss-Tsukamoto2019double}) that mean dimension equals the infimum over all metrics of the metric mean dimension.

\subsection{Further directions}

\subsubsection{Further properties of zipper maps}

We have left many questions open even when restricting to the case of zipper maps. One could first determine the exact range of Theorem \ref{thm:horseshoes}, by determining the set of all values of the parameter $p$ for which $T=Z_p$ admits horseshoes of all order.

Another intriguing question is that of topological conjugacy: for which $p,p'$ are the zipper maps $Z_p$, $Z_{p'}$ topologically conjugate? Is there a continuum of conjugacy classes among the $Z_p$? Note that one does not hope for any sort of topological stability here: the numerous local extrema can change their relative positions even under arbitrary small perturbations of $p$.

Zipper maps have numerous invariant measures, e.g. when Theorem \ref{thm:horseshoes} applies one can consider any invariant measure of a finite shift $\{1,\dots, k\}^\mathbb{N}$ and use a horseshoe of order $k$ to build from it an invariant measure for $Z_p$. They are however localized in a tiny part of the space and only witness a tiny amount of the dynamics at play. Is there a way to single out a ``most natural'' measure? How many physical measures does a given or typical zipper map possess? Is there something alike a measure of maximal entropy (which shall be defined in a way to be specified, more restrictively than only asking for the Kolmogorov-Sinai entropy to be infinite, for otherwise there would be no hope for uniqueness).

\subsubsection{Rough dynamics for other specific maps}

While zipper maps already provide an interesting playground, the original question that sparked this work was about the more classical Weierstrass functions such as
\[W_{a,b}(x) = \sum_{n\in\mathbb{N}} a^n \cos(2\pi b^n x)\]
with $ab>1$, $a<1$. Letting $I$ be the range of $W_{a,b}$, what are the dynamical properties of the map $T=W_{a,b}$ from $I$ to itself? While Weierstrass appear as \emph{function} in several works of dynamical flavor, they do not seem to have been considered as \emph{maps} to be iterated.

Another class of irregular maps that one could study is the graph of Brownian motion. Brownian motion $(B_t)_{t\in\mathbb{R}}$ is a continuous-time stochastic process, i.e. it is constructed as a measurable map $\omega\mapsto (B_t(\omega))_{t\in\mathbb{R}}$ from a standard probability space $(\Omega,\mathscr{F},\mathbb{P})$ to the space of continuous functions $\mathbb{R}\to\mathbb{R}$. In this definition, the domain $\mathbb{R}$ is thought as time and the range $\mathbb{R}$ as space; but for almost any event $\omega\in\Omega$, we can consider the map $T:t\mapsto B_t(\omega)$. It is well known that $\lvert B_t(\omega)-B_s(\omega)\rvert$ is very roughly of the order of $\lvert t-s\rvert^{\frac12}$, in particular much larger than $\lvert t-s\rvert$ when $t,s$ are close, but much smaller than $\lvert t-s\rvert$ when they are far away. The map $T$ is thus almost surely locally $\alpha$-H\"older for all $\alpha<\frac12$, but far from Lipschitz. What are its \emph{typical} (e.g. almost sure) dynamical properties? With the usual normalization, $T(0)=0$ and $T(t)=o(t)$, so that there is a compact interval of non-empty interior $I$ such that $T(I)= I$, and every orbit of $T$ is attracted to $I$ extremely fast. One can thus restrict to study the dynamics of $T$ on the random interval $I$.

\subsubsection{Rough dynamics for generic maps}

In \cite{Yano1980homeo}, Yano proved that a $C^0$-generic continuous map of a manifold has infinite entropy; this was refined in term of metric mean dimension in \cite{Carvalo+2022generic}. A general theory of rough dynamics should include the answer to the following problems:
\begin{itemize}
\item find other ``hyperchaotic'' dynamical properties of $C^0$-generic continuous maps of manifolds,
\item what happens for generic $\alpha$-Hölder maps, $\alpha\in(0,1)$? 
\end{itemize}
In both cases, the presence of horseshoes of arbitrary order is likely to be easy to prove.

\subsubsection{First-order logic of continuous interval maps}

We would like to suggest the study of first-order logic of continuous interval maps in a similar way to first-order logic of groups. Let us briefly recall the case of groups first. 

One considers sentences build from logical connectives, quantifiers, the group operations $*$ and $\null^{-1}$ and variable names to be interpreted as element of a particular group. For example, a sentence such as $x*y=y*x$ expresses that two elements $x,y$ commute, while the closed sentence\footnote{I.e. sentence without free variables, all variables being introduced by a quantifier.} $\forall x,y, x*y=y*x$ expresses commutativity of the group. The \emph{elementary theory} of a group $G$ is the set of closed sentences that are true in $G$, and two groups are called \emph{elementary equivalent} when they have the same elementary theory. Observe that variables are to be interpreted solely as group elements, so that it is not allowed to quantify on other type of variables such as integers.
Fueled by Tarski's problem, the first-order logic of groups has played an important role in relation to group theory and geometric group theory, see e.g. \cite{Sela2006diophantine,Perin2008plongements}.
Typical questions revolve around algebraic and geometric properties of groups that are preserved under elementary equivalence.

Corollary \ref{cm:orders} hints that a similar theory could be developed for continuous maps of the interval $[0,1]$, by including as symbols usable in sentences: $T$ to denote application of the map, and $<$ for the usual order. First-order sentences are well-formed sentences in the symbols 
$\{T,<,=, \mathrm{or},\mathrm{and}, \mathrm{not}, (,), \forall,\exists\} \cup V$ where $V=\{x,y,z,\dots\}$ is a countably infinite set of variable names to be interpreted as taking value in $[0,1]$. For example,
\[\exists x, (T(x)\neq x \text{ and }T^2(x)=x)\]
is a first-order sentence expressing the existence of an orbit of period $2$. However the sentence
\[\exists z, \forall x,y, \exists n\in\mathbb{N}, x\ge y \text{ or } x<T^n(z)<y\]
expressing topological transitivity is not a valid first-order sentence, since there is a quantified variable taking values in $\mathbb{N}$.

To each continuous map $T:[0,1]\to[0,1]$ is thus associated its elementary theory, the set of closed sentences it satisfies. The question is then: which dynamical properties of $T$ are shared by all maps elementary equivalent to $T$? 
Corollary  \ref{cm:orders} says that all maps $Z_p$ to which Theorem \ref{thm:horseshoes} applies share the part of the elementary theory where only the \emph{existential} quantifier is used. Are they all elementary equivalent? What about Baire-generic continuous maps of the interval? What about classical maps, e.g. the quadratic family? Does the elementary theory identify the topological conjugacy class inside such a family? Are properties such as topological transitivity invariant under elementary equivalence?

As an example of the dynamical relevance of first-order logic, it is left as an exercise to the reader to prove that if two continuous interval maps $T, S : [0,1]\to[0,1]$ are elementary equivalent, then they have the same topological entropy. Hints: we only need that they satisfy the same existential sentences; do not try to translate $h(T)=y$ as a single sentence; use Misiurewicz' theorem according to which entropy is arbitrary close to be realised by horseshoes in iterates of the map \cite{Misiurewicz1979horseshoes}.

\section{Basic properties of zipper maps}

\subsection{Additional notations}

There are several Iterated Function Systems hiding behind the contraction $\Phi_p$, that will be useful in various arguments. First, there are the horizontal and vertical IFS on $[0,1]$:
\begin{align*}
H_0(x) &= x_1\cdot x & V_0(y) &= y_1\cdot y \\
H_1(x) &= (x_2-x_1)x+x_1 & V_1(y) &= (y_2-y_1)y+y_1 \\
H_2(x) &= (1-x_2)x+x_2 & V_2(y) &= (1-y_2)y+y_2
\end{align*}
that will be used to define relevant sub-intervals of $[0,1]$, and we can combine them into the IFS on the square $[0,1]^2$ defined by the three contractions
$P_i(x,y) = \big(H_i(x), V_i(y)\big)$, whose attractor is the graph of $Z_p$.
A word with letters in the alphabet $\{0,1,2\}$ will be written under the form $\omega = i_1i_2\dots i_\ell$, where $\ell\in\mathbb{N}$ is called its \emph{length} and is also denoted by $\lvert \omega\rvert$. We use exponents to denote repetition of a letter, e.g. $0^k = 0\cdots 0$ ($k$ times). The set of words of length $n$ is denoted by $\{0,1,2\}^n$, and the set of finite words by $\{0,1,2\}^*$. Given $\omega = i_1i_2\dots i_\ell\in\{0,1,2\}^*$ we set $H_\omega = H_{i_1}\circ H_{i_2} \circ\dots\circ H_{i_\ell}$ and we define similarly $V_\omega$ and $P_\omega$. 

We define intervals $I_\omega = H_\omega([0,1])$ and $J_\omega = V_{\omega}([0,1])$ (in particular $I_0 = [0,x_1]$, $I_1 = [x_1,x_2]$ and $I_2 = [x_2,1]$; $J_0 = [0,y_1]$, $J_1 = [y_2,y_1]$ and $J_2 = [y_2,1]$); they satisfy the relations 
\begin{align*}
Z_p(I_\omega) &= J_\omega \\
I_\omega \subset I_\sigma &\text{ and } J_\omega \subset J_\sigma \quad\text{whenever $\sigma$ is a prefix of $\omega$}
\end{align*}
and for each $\ell$, the family $(I_\omega)_{\lvert \omega\rvert=\ell}$ is a \emph{tiling} of $[0,1]$ (the intervals have disjoint interiors and their union is  $[0,1]$) while the family $(J_\omega)_{\lvert \omega\rvert=\ell}$ is a covering of $[0,1]$. Given a word $\omega=i_1 i_2\dots i_\ell$, we will sometimes use the word \emph{parent} to mean the word $\omega'=i_1 i_2\dots i_{\ell-1}$, i.e. the largest strict prefix. Similarly, the parent of an interval $I_\omega$ or $J_\omega$ is the corresponding interval $I_{\omega'}$ or $J_{\omega'}$.
These intervals combine into rectangles $R_\omega = I_\omega \times J_\omega = P_\omega([0,1]^2)$, whose diagonals have slopes at least $\lambda_{\min}^{\lvert\omega\rvert}$. Observe that since $\lambda_{\min}>1$, the top-right vertex $(x_1,y_1)$ of $R_0$ lies above the diagonal and the bottom-left vertex $(x_2,y_2)$ of $R_2$ lies below the diagonal.

We introduce the following quantities (again implicitly depending upon $p$):
\begin{align*}
v_{\min} &=\min(y_1,y_1-y_2,1-y_2) & \\
h_{\min} &= \min(x_1,x_2-x_1,1-x_2) & h_{\max} &= \max(x_1,x_2-x_1,1-x_2)
\end{align*}
i.e. $v_{\min}$ is the smallest height of the rectangles $R_0,R_1,R_2$ while $h_{\min}$ is their smallest width, $h_{\max}$ their maximal width. For all $\alpha\in(0,1]$ we also set:
\begin{gather*}
\lambda_0(\alpha) = \frac{y_1}{x_1^\alpha} \qquad \lambda_1(\alpha) = \frac{y_1-y_2}{(x_2-x_1)^\alpha} \qquad \lambda_2(\alpha) = \frac{1-y_2}{(1-x_2)^\alpha} \\
\lambda_{\min}(\alpha) = \min\big(\lambda_0(\alpha),\lambda_1(\alpha),\lambda_2(\alpha)\big) \qquad \lambda_{\max}(\alpha) = \max\big(\lambda_0(\alpha),\lambda_1(\alpha),\lambda_2(\alpha)\big) 
\end{gather*}
In particular $\lambda_{\min}=\lambda_{\min}(1)$.

\subsection{Regularity}

To study the regularity of zipper maps, we first search for conditions on $p$ ensuring that $\Phi_p$ preserves $\Ck^\alpha_0$ and contracts the $\alpha$-H\"older semi-norm (up to a constant). 
\begin{lemm}
For all $f\in\Ck^\alpha_0$, $\Hol_\alpha(\Phi_p f) \le \max\big( \lambda_{\max}(\alpha) \Hol_\alpha(f), (x_2-x_1)^{-\alpha} \big)$.
\end{lemm}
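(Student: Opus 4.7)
The plan is to fix $f\in\Ck^\alpha_0$, set $K:=\Hol_\alpha(f)$ and $g:=\Phi_p f$, and bound $|g(x)-g(y)|/|x-y|^\alpha$ by distinguishing cases according to which of the three intervals $[0,x_1]$, $[x_1,x_2]$, $[x_2,1]$ contains each of $x,y$.

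The two easy cases I would dispatch first are: when $x,y$ lie in the same subinterval, the affine rescaling in the definition of $\Phi_p$ combined with the Hölder bound on $f$ immediately gives $|g(x)-g(y)|\le \lambda_i(\alpha) K |x-y|^\alpha\le \lambda_{\max}(\alpha) K |x-y|^\alpha$; and when $x,y$ lie in the non-adjacent intervals $[0,x_1]$ and $[x_2,1]$, the trivial bound $|g(x)-g(y)|\le 1$ (since $g$ maps into $[0,1]$) together with $|x-y|\ge x_2-x_1$ yields the ratio $\le(x_2-x_1)^{-\alpha}$.

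The main obstacle is the \emph{adjacent}-intervals case. Naively writing $|g(x)-g(y)|\le |g(x)-g(x_j)|+|g(x_j)-g(y)|$ at the junction $x_j\in\{x_1,x_2\}$ and applying the Hölder bound on each piece gives an expression of the form $K\lambda_{\max}(\alpha)(a^\alpha+b^\alpha)$ with $a+b=|x-y|$, which for $\alpha<1$ exceeds $K\lambda_{\max}(\alpha)|x-y|^\alpha$ by a factor that can approach $2^{1-\alpha}$ and thus violates the target bound. The key observation I would exploit is that, because $f$ is valued in $[0,1]$ with $f(0)=0$ and $f(1)=1$, the two quantities $g(x)-g(x_j)$ and $g(y)-g(x_j)$ always have the \emph{same sign} (both $\le 0$ at $x_j=x_1$, where $g$ attains the local maximum value $y_1$; both $\ge 0$ at $x_j=x_2$, where $g$ attains the local minimum value $y_2$). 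This forces the sharper bound $|g(x)-g(y)|\le \max\bigl(|g(x)-g(x_j)|,|g(y)-g(x_j)|\bigr)$.

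To finish, I would bound each of these two terms using $f(0)=0$ or $f(1)=1$ to obtain estimates of the form $K\lambda_i(\alpha)|x-x_j|^\alpha$ and $K\lambda_{i'}(\alpha)|y-x_j|^\alpha$, and then use monotonicity of $t\mapsto t^\alpha$ together with $\max(|x-x_j|,|y-x_j|)\le |x-y|$ to conclude that the adjacent-case ratio is also bounded by $\lambda_{\max}(\alpha)K$. Combining the three cases yields the claimed estimate.
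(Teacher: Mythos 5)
Your proposal is correct and follows essentially the same route as the paper's proof: same three-case split, the same key ``same-sign'' observation at the junction $x_j$ (yielding $\lvert g(x)-g(y)\rvert\le\max(\lvert g(x)-g(x_j)\rvert,\lvert g(y)-g(x_j)\rvert)$ rather than a lossy triangle inequality), and the same conclusion via the one-sided Hölder bounds and monotonicity of $t\mapsto t^\alpha$. Your write-up is if anything a bit more explicit than the paper about \emph{why} the maximum bound holds in the adjacent case, which is a nice touch.
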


\begin{proof}
Let $x < x'$ be two points on $[0,1]$.
If both lie in the same $I_j$ for some $j\in\{0,1,2\}$, a direct computation yields
\[\lvert \Phi_p f(x)-\Phi_p f(x') \rvert \le \lambda_j(\alpha) \Hol_\alpha(f) \lvert x-x'\rvert^\alpha \le \lambda_{\max}(\alpha) \Hol_\alpha(f) \lvert x-x'\rvert^\alpha.
\]
When $x\in I_0$ and $x'\in I_1$, using $\Phi_p f(I_j) = J_j$ it comes
\begin{align*}
\lvert \Phi_p f(x)-\Phi_p f(x') \rvert &\le \max(\lvert \Phi_p f(x) -y_1\rvert, \lvert y_1- \Phi_p f(x') \rvert) \\
  &\le \lambda_{\max}(\alpha) \Hol_\alpha(f) \max(\lvert x-x_1\rvert^\alpha,\lvert x'-x_1\rvert^\alpha) \\
  &\le \lambda_{\max}(\alpha) \Hol_\alpha(f) \lvert x-x'\rvert^\alpha.
\end{align*}
The same argument applies when $I_0,I_1$ are replaced by $I_1,I_2$.
Last, if $x\in I_0$ and $x'\in I_2$, we have $\lvert x'-x \rvert \ge x_2-x_1$ and therefore
\begin{align*}
\lvert \Phi_p f(x)-\Phi_p f(x') \rvert &\le 1 \\
  &\le \frac{\lvert x'-x \rvert^\alpha}{(x_2-x_1)^\alpha}.
\end{align*}
\end{proof}

\begin{prop}\label{prop:holder}
The zipper map $T=Z_p$ is $\alpha_{\min}$-H\"older where
\[\alpha_{\min} = \max \{\alpha \mid \lambda_{\max}(\alpha)\le 1\} = \min\Big(\frac{\log y_1}{\log x_1}, \frac{\log(y_1-y_2)}{\log(x_2-x_1)}, \frac{\log(1-y_2)}{\log(1-x_2)} \Big)>0.\]
\end{prop}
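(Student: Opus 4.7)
The plan is to iterate $\Phi_p$ starting from a suitable initial function in $\Ck^\alpha_0$, use the previous lemma to keep the Hölder seminorm under control uniformly in the iterate, and then exploit lower semicontinuity of $\Hol_\alpha$ under uniform convergence to transfer the bound to the fixed point $Z_p$.

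First I would analyze the function $\alpha\mapsto\lambda_{\max}(\alpha)$. Since each $\lambda_j(\alpha)=y_j/h_j^\alpha$ (with the obvious interpretation of $y_j$ and $h_j$) satisfies $y_j,h_j\in(0,1)$, we have $\lambda_j(\alpha)$ strictly increasing in $\alpha$ and equal to $1$ exactly when $\alpha=\log y_j/\log h_j>0$. Thus $\lambda_{\max}$ is continuous and strictly increasing, and $\{\alpha\mid \lambda_{\max}(\alpha)\le 1\}$ is the closed interval $(0,\alpha_{\min}]$ with $\alpha_{\min}=\min_j(\log y_j/\log h_j)$, which gives the stated formula (and in particular $\alpha_{\min}>0$).

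Next I would fix $\alpha=\alpha_{\min}$, so that $\lambda:=\lambda_{\max}(\alpha)\le 1$, and set $C=(x_2-x_1)^{-\alpha}$. Taking $f_0=\Id\in\Ck^\alpha_0$, which satisfies $\Hol_\alpha(f_0)\le 1$ since $\lvert x-y\rvert^{1-\alpha}\le 1$ on $[0,1]$, I define $f_n=\Phi_p^n f_0$. The lemma gives
\[\Hol_\alpha(f_{n+1})\le \max\bigl(\lambda\Hol_\alpha(f_n),C\bigr)\le \max\bigl(\Hol_\alpha(f_n),C\bigr),\]
so by induction $\Hol_\alpha(f_n)\le M:=\max(1,C)$ for all $n$. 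Because $\Phi_p$ is a uniform contraction with unique fixed point $Z_p$, $f_n\to Z_p$ uniformly on $[0,1]$. For any fixed pair $x\neq y$, $\lvert Z_p(x)-Z_p(y)\rvert=\lim_n\lvert f_n(x)-f_n(y)\rvert\le M\lvert x-y\rvert^\alpha$, which proves $\Hol_\alpha(Z_p)\le M<\infty$, i.e. $Z_p$ is $\alpha_{\min}$-Hölder.

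There is no real obstacle here beyond keeping track of the inequalities; the only subtlety is making sure the seminorm bound survives the uniform limit, which is handled by pointwise passage to the limit for each pair $(x,y)$ rather than any global semicontinuity statement. One could also note (though it is not asserted in the proposition) that $\alpha_{\min}$ is optimal under the hypersensitivity assumption, since at the $j$ realizing the minimum in the formula the three-scale self-similarity of $Z_p$ forces the Hölder exponent to be at most $\log y_j/\log h_j$; but this is not needed for the statement as given.
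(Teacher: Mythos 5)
Your proof is correct and follows essentially the same route as the paper's: fix $\alpha=\alpha_{\min}$, iterate $\Phi_p$ starting from the identity, use the lemma to get a uniform bound $\Hol_\alpha(f_n)\le(x_2-x_1)^{-\alpha}$ (your $M=\max(1,C)$ equals $C$ since $x_2-x_1<1$), and pass to the uniform limit. You are a bit more explicit than the paper on two points it leaves implicit — the monotonicity argument identifying $\alpha_{\min}$ with the given minimum of logarithmic ratios, and the pointwise passage of the Hölder bound to the limit — but the substance is the same.
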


\begin{proof}
Fix $\alpha=\alpha_{\min}$ and let $f_k = \Phi_p^k(\Id_{[0,1]})$: we have $f_k\to T$ in the uniform norm and $\lambda_{\max}(\alpha)=1$. The previous Lemma and an induction yields for all $k$ that $\Hol_{\alpha}(f_k) \le \max\big(\Hol_{\alpha}(\Id_{[0,1]}),(x_2-x_1)^{-\alpha}\big)=(x_2-x_1)^{-\alpha}$ and is thus bounded independently of $k$. It follows that $T$ is $\alpha$-H\"older with $\Hol_{\alpha}(T)\le (x_2-x_1)^{-\alpha}$.
\end{proof}

\subsection{Roughness}

We now want to express an ``irregularity''; we cannot hope for an inequality of the form $c\lvert x-x'\rvert^\beta \le \lvert T(x)-T(x')\rvert$ since $T=Z_p$ is not even locally one-to-one. We introduce the following terminology.
\begin{defi}
A continuous map $f:I\to J$ between intervals is said to be \emph{$\beta$-hypersensitive} if there exists $C>0$ such that for all interval $A\subset I$ it holds
\[\lvert f(A) \rvert \ge C\lvert A\rvert^\beta\]
(where $\lvert\cdot\rvert$ denotes the length of an interval). A map is said to be \emph{hypersensitive} if it is $\beta$-hypersensitive for some $\beta\in(0,1)$.
\end{defi}
Hypersensitivity is a kind of strengthening of the property of being expanding suitable for irregular, not locally one-to-one maps: small intervals grow super-exponentially in size when $f$ is applied repeatedly. This choice of terminology reconciles with the use of the word in the introduction:
\begin{prop}\label{p:hypersensitivity}
The zipper map $T=Z_p$ is hypersensitive if and only if $\lambda_{\min}>1$ (with exponent $\beta=1+\frac{\log \lambda_{\min}}{\log h_{\min}} \in(0,1)$).
\end{prop}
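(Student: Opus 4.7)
The ``only if'' direction is handled by testing $\beta$-hypersensitivity on the fixed-letter tiles $A = I_{j^n}$. Using $T(I_\omega) = J_\omega$, the definition gives $(v_j/h_j^\beta)^n \ge C$ for all $n$, which forces $v_j \ge h_j^\beta$ in the limit $n\to\infty$. Since $\beta \in (0,1)$ and $h_j < 1$, this yields $v_j > h_j$, so $\lambda_j > 1$ for each $j$ and in particular $\lambda_{\min}>1$.

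For the converse I set $\beta=1+\log\lambda_{\min}/\log h_{\min}$. One first checks $\beta\in(0,1)$: the upper bound is clear and $\beta>0$ reduces to $\lambda_{\min}h_{\min}<1$, which holds because $\lambda_{\min}h_{\min}\le v_{j_0}<1$ for any $j_0$ achieving $\lambda_{\min}$. The central reduction is the following: for $A\subset I_\omega$ and $A'=H_\omega^{-1}(A)$,
\[\frac{|T(A)|}{|A|^\beta}=\frac{v_\omega}{h_\omega^\beta}\cdot\frac{|T(A')|}{|A'|^\beta},\qquad \frac{v_\omega}{h_\omega^\beta}=\prod_k \lambda_{i_k}h_{i_k}^{1-\beta}\ge 1,\]
where each per-letter factor is $\ge 1$ because $\log\lambda_j/|\log h_j|\ge\log\lambda_{\min}/|\log h_{\min}|$ (the numerator only grows and the denominator only shrinks as one moves away from the minimizers). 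It therefore suffices to bound $|T(A)|/|A|^\beta$ from below when $A$ meets at least two of $I_0,I_1,I_2$; and if $A$ additionally contains some $I_j$ entirely then $|T(A)|\ge v_{\min}$ and $|A|^\beta\le 1$ finish that subcase.

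The remaining case is $A=[a,b]$ crossing exactly one internal boundary, say $x_1$, with $a\in(0,x_1)$ and $b\in(x_1,x_2)$. I select two sub-tiles of $A$ anchored at $x_1$: let $k_0\ge 1$ be the smallest integer with $h_0h_2^{k_0}\le x_1-a$, so that the tile $I_{02^{k_0}}$ (right endpoint $x_1$, width $h_0h_2^{k_0}$) lies inside $[a,x_1]$; symmetrically the tile $I_{10^{k_1}}$ (left endpoint $x_1$) lies inside $[x_1,b]$. Writing $\alpha_j:=\log v_j/\log h_j$ so that $v_j=h_j^{\alpha_j}$, the minimality of $k_0$ gives $h_2^{k_0}>h_2(x_1-a)/h_0$, whence
\[|T(A)|\ge |J_{02^{k_0}}|=v_0(h_2^{k_0})^{\alpha_2}\ge c_L\,(x_1-a)^{\alpha_2},\]
with an explicit constant $c_L>0$, and symmetrically $|T(A)|\ge c_R(b-x_1)^{\alpha_0}$.

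The main obstacle is then to compare the local exponents $\alpha_0,\alpha_2$ to the target $\beta$. Exactly the same monotonicity used in the reduction shows $\alpha_j\le\beta$ for every $j$, so $(x_1-a)^{\alpha_2}\ge(x_1-a)^\beta$ and $(b-x_1)^{\alpha_0}\ge(b-x_1)^\beta$. Combining with $\max(x_1-a,b-x_1)\ge|A|/2$ yields $|T(A)|\ge C|A|^\beta$ with $C=\min(c_L,c_R)\cdot 2^{-\beta}$. The case of $A$ crossing $x_2$ is treated identically using the tiles $I_{12^{k_0}}$ and $I_{20^{k_1}}$ (the relevant local exponents are again $\alpha_2$ and $\alpha_0$). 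The reason why I must use these explicit anchored tiles rather than a naive recursion on the pieces $A\cap I_0$, $A\cap I_1$ is precisely that such a self-consistent recursion with global exponent $\beta$ does not close; one really needs to exploit the strictly finer local expansion at the boundaries, captured by the inequality $\alpha_j<\beta$.
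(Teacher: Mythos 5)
Your proof is correct, and the ``if'' direction takes a genuinely different route from the paper's. The ``only if'' half is essentially the contrapositive of the paper's argument and uses the same test family $I_{j^n}$. For the ``if'' half, the paper takes an arbitrary interval $A$ and picks the tile $I_\omega$ of \emph{minimal depth} $k$ contained in $A$; minimality gives for free that $|I_\omega|\ge\tfrac{h_{\min}}{2}|A|$ and that $k\ge\log|A|/\log h_{\min}$, after which a single inequality $|J_\omega|\ge\lambda_{\min}^{k}|I_\omega|$ closes the argument with no case distinction. Your route instead performs the self-similar reduction $A\mapsto H_\omega^{-1}(A)$ first (choosing $\omega$ maximal with $A\subset I_\omega$, which is where the per-letter estimate $\lambda_j h_j^{1-\beta}\ge 1$ enters), and then handles the reduced boundary-spanning interval via anchored tiles $I_{02^{k_0}}$, $I_{10^{k_1}}$ together with the exponent comparison $\alpha_j\le\beta$. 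The two approaches encode the same information --- your inequality $\alpha_j\le\beta$ and the paper's depth bound $k\ge\log|A|/\log h_{\min}$ are two faces of the fact that $\log\lambda_j/|\log h_j|$ is minimized when the numerator uses $\lambda_{\min}$ and the denominator uses $h_{\min}$ --- but the paper's version avoids the reduction step and the case analysis entirely, while yours makes the self-similar geometry and the role of the boundary points more visible. One cosmetic point: your closing remark asserts the \emph{strict} inequality $\alpha_j<\beta$, but equality can hold (when a single index $j$ achieves both $\lambda_{\min}$ and $h_{\min}$); fortunately your argument only ever uses $\alpha_j\le\beta$, so nothing breaks.
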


\begin{proof}
If $\lambda_{\min} \le 1$, let $i\in \{0,1,2\}$ be such that $R_i$ has diagonals of slopes at most $1$ and consider the word $i^\ell=ii \cdots i$ of length $\ell\in\mathbb{N}$. Then $\lvert Z_p(I_{i^\ell})\rvert = \lvert J_{i^\ell} \rvert \le \lvert I_{i^\ell}\rvert$
while $\lvert I_{i^\ell}\rvert$ can be made arbitrarily small by taking $\ell$ large enough. This prevents $Z_p$ from being hypersensitive.

Assume now $\lambda_{\min} > 1$.
Let $A\subset [0,1]$ be an interval, and let $k$ be the minimal positive integer such that for some word $\omega$ of length $k$, $I_\omega\subset A$; since the length of $I_\omega$ is between $h_{\min}^k$ and $h_{\max}^k$, such a $k$ must exist and we have 
\[ \frac{\log\lvert A\rvert}{\log h_{\min}} \le k \le \frac{\log\frac{\lvert A\rvert}{2}}{\log h_{\max}} + 1\]
(if the second inequality did not hold, the tiling of depth $k-1$ would have all its elements of size less than $\frac{\lvert A\rvert}{2}$, so that at least one of them would be a sub-interval of $A$, contradicting the minimality of $k$).
The same argument yields $\lvert I_\omega \rvert \ge \frac{h_{\min}}{2} \lvert A\rvert$, otherwise either the parent of $I_\omega$ or a neighbor of its parent would be contained in  $A$.

We obtain
\[
\lvert Z_p(A) \rvert \ge \lvert Z_p(I_\omega)\rvert = \lvert J_\omega \rvert \ge
 \lambda_{\min}^k \lvert I_\omega\rvert 
\ge \lambda_{\min}^{\frac{\log\lvert A\rvert}{\log h_{\min}}}\frac{h_{\min}}{2} \lvert A\rvert
 \ge \frac{h_{\min}}{2} \lvert A\rvert^{1+\frac{\log \lambda_{\min}}{\log h_{\min}}}
\]
\end{proof}

\section{Horseshoes of arbitrary order}

In this section we prove Theorem \ref{thm:horseshoes}, then deduce Corollaries \ref{cm:orders} and \ref{cm:universality}.

\subsection{Symmetric parameters}

We start with the case when $T=Z_p$ satisfies assumption \ref{enumi:horse1} in Theorem \ref{thm:horseshoes}. This is the easiest case, while providing a good introduction to the second case.

Since the parameter $p$ is symmetric, $\frac12$ is a fixed point of $T$. The idea is to zoom in to this point and use hypersensitivity to find many thin and high rectangles $(R_\omega)_{\omega\in A}$ in the vicinity of $(\frac12,\frac12)$.

\begin{lemm}
Assume $p$ is symmetric (assumption \ref{enumi:horse1} in Theorem \ref{thm:horseshoes}). Then for all $k\in\mathbb{N}$, there exist a set $A$ of words on the alphabet $\{0,1,2\}$, with cardinal at least $k$, such that for all $\omega,\sigma\in A$:
\begin{itemize}
\item $I_\omega$ and $I_\sigma$ have disjoint interior whenever $\omega\neq\sigma$,
\item $J_\omega \supset I_\sigma$.
\end{itemize}
\end{lemm}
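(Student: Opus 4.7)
My plan is to exploit the symmetry of $p$ to place the fixed point $1/2$ of $T$ at the center of the picture and then use hypersensitivity to build many thin-and-tall rectangles $R_\omega = I_\omega \times J_\omega$ clustered around $(1/2,1/2)$. Since $x_1+x_2=y_1+y_2=1$, both $H_1$ and $V_1$ fix $1/2$, so the intervals $I_{1^n}$ and $J_{1^n}$ are centered at $1/2$ with lengths $(1-2x_1)^n$ and $(2y_1-1)^n$ respectively, and by hypersensitivity $\lambda_1 = (2y_1-1)/(1-2x_1) > 1$, so their ratio $\lambda_1^n$ diverges.

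I would then look for $A$ of the form $\{1^n\tau : \tau \in S\}$ for a suitable finite set $S$ of words of a common length, with $n$ chosen last. The intervals $I_\omega$ are then automatically sub-tiles of $I_{1^n}$, hence pairwise disjoint; and because all the $I_\sigma$ in $A$ lie in $I_{1^n}$, the horseshoe-type condition $J_\omega \supset I_\sigma$ for every pair $\omega,\sigma \in A$ reduces to the uniform condition $J_\omega \supset I_{1^n}$. Writing $J_\omega = V_{1^n}(J_\tau)$ and using that $V_{1^n}$ is an affine map fixing $1/2$ with contraction ratio $(2y_1-1)^n$, this is equivalent to $J_\tau \supset K_n$, where $K_n := V_{1^n}^{-1}(I_{1^n})$ is the interval centered at $1/2$ of length $\lambda_1^{-n}$. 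Since $K_n$ shrinks to $\{1/2\}$ as $n\to\infty$, the problem reduces to exhibiting, for any $k$, at least $k$ words $\tau$ of a common length with $1/2 \in \operatorname{int}(J_\tau)$: one can then pick $n$ large enough that $K_n \subset J_\tau$ for each such $\tau$ at once.

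To produce these words I plan to analyze backward orbits of $1/2$ under the inverse IFS $\{V_0^{-1}, V_1^{-1}, V_2^{-1}\}$, since $1/2 \in J_\tau$ is equivalent to $V_\tau^{-1}(1/2)$ being well-defined and lying in $[0,1]$. The key structural facts are that $V_1^{-1}$ fixes $1/2$ and that $1/2$ lies in the interior of each of $J_0, J_1, J_2$ (because $y_1 > 1/2$ in the symmetric hypersensitive case), so at $1/2$ there are three valid backward branches available. Combining the ``stay-at-$1/2$'' branch $V_1^{-1}$ with controlled excursions through $V_0^{-1}$ or $V_2^{-1}$ should yield arbitrarily many distinct valid finite orbits. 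The main obstacle I expect is precisely this last combinatorial step: $V_0^{-1}$ and $V_2^{-1}$ have expansion rate $1/y_1 > 1$, and $V_1^{-1}$ expands around $1/2$ by $1/(2y_1-1) > 1$, so one has to arrange the words carefully enough that the orbits remain inside $[0,1]$ while producing at least $k$ distinct ones; this is where the explicit construction has to be made.
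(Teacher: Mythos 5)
Your reduction is correct and tracks the paper's strategy closely: zoom in with $P_{1^n}^{-1}$ around the fixed point $(\tfrac12,\tfrac12)$, write $J_{1^n\tau}=V_{1^n}(J_\tau)$, and observe that $J_{1^n\tau}\supset I_{1^n\sigma}$ follows once $J_\tau\supset K_n:=V_{1^n}^{-1}(I_{1^n})$, an interval centered at $\tfrac12$ of length $\lambda_1^{-n}\to 0$. (This is exactly the paper's pair of conditions \eqref{e:delta1}--\eqref{e:delta2}, with your $K_n$ playing the role of the $\eta$-neighborhood of $\tfrac12$.) The genuine gap is the one you flag yourself: by insisting that all $\tau$ have a \emph{common} length you import a nontrivial counting problem --- exhibit, for every $k$, at least $k$ words $\tau$ of a common length with $\tfrac12\in\interior(J_\tau)$ --- and you do not prove it. Your sketch in terms of backward orbits and expansion rates does not close this; in particular, it is not true that \emph{any} choice of excursion through $V_0^{-1},V_2^{-1}$ stays in $[0,1]$, so some argument is needed.

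The paper sidesteps this entirely by \emph{not} requiring common length: it takes $A=\{1^{n+\ell}i:\,0\le\ell\le k,\,\ell\text{ even},\,i\in\{0,2\}\}$. For $\tau=1^\ell i$ the condition $\tfrac12\in\interior(J_\tau)$ is immediate (since $V_1$ fixes $\tfrac12$ and $\tfrac12\in\interior(J_i)$ for every $i$), and disjointness of interiors is obtained by observing that no word in $A$ is a prefix of another (each ends in a non-$1$ letter). So the price paid for variable lengths is a one-line prefix argument, much cheaper than your combinatorial step. Your plan can nonetheless be completed: for each $0\le m<\ell$ and $i\in\{0,2\}$, the point $V_i^{-1}(\tfrac12)$ lies in $(0,1)$, and since the interiors of the $J_\sigma$ ($|\sigma|=\ell-m-1$) cover $(0,1)$ there is a $\sigma$ with $V_i^{-1}(\tfrac12)\in\interior(J_\sigma)$; the words $1^m i\sigma$ are $2\ell$ distinct words of length $\ell$ with $\tfrac12\in\interior(J_\tau)$, which (with $1^\ell$) gives $2\ell+1\ge k$ for $\ell$ large. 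As submitted, though, the proposal is incomplete precisely at the step you identified.
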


Case \ref{enumi:horse1} of Theorem \ref{thm:horseshoes} follows right away since
$T(I_\omega)=J_\omega\supset I_\sigma$ for all $\omega,\sigma\in A$. 

An interpretation of the proof below which will prove useful in the sequel is that we shall use $P_{\omega_0}^{-1}$ as a zoom-in map, which renormalises a small (rather tall and very thin) rectangle $R_{\omega_0}$ into the unit square, sending the diagonal of $[0,1]^2$ to an almost horizontal line. That line is the graph of the identity map in ``local coordinates'' provided by $P_{\omega_0}$; in those coordinates, the identity map has a certain ``range'' (contained in $[\frac12-\eta,\frac12+\eta]$ below). We then only have to choose $\omega_0$ in a way ensuring that many rectangles $R_{\omega_+}$ have their vertical sides cover this ``range'' of the identity map; the words of the form $\omega_0\omega_+$  will constitute $A$.

\begin{proof}
For each $n\in\mathbb{N}$ we consider the word $1^n$ (the word $11\dots1$ of length $n$) and consider $P_{1^n}^{-1} : R_{1^n}\to R=[0,1]^2$. The image $D_{1^n} = P_{1^n}^{-1}(\{(x,y)\in R_{1^n} \mid x=y\})$ of the diagonal is a line containing the point $(\frac12,\frac12)$ and of slope $\lambda_1^{-n}$ where $\lambda_1=\frac{y_1-y_2}{x_2-x_1}>1$. We will choose $n$ and a $\eta>0$ ensuring two conditions, the first one being that $D_{1^n}$ is contained in $[0,1]\times (\frac12-\eta,\frac12+\eta)$ (which is equivalent to $H_{1^n}([0,1])\subset V_{1^n}((\frac12-\eta,\frac12+\eta))$); for this it is sufficient to have
\begin{equation}
\eta > \frac12\Big(\frac{x_2-x_1}{y_1-y_2}\Big)^n \, .
\label{e:delta1}
\end{equation}

The second condition we want to ensure is that $J_{1^{\ell} i}\supset (\frac12-\eta,\frac12+\eta)$ for all $\ell\le k$ and $i \in \lrs{0,1,2}$. By writing $J_{1^{\ell} i}=V_1^{\ell}(J_i)$ and using $V_1(\frac12)=\frac12$ and $(y_2,y_1) \subset J_i$, we see that for this condition to hold it is sufficient to have
\begin{equation}
\eta < \big(\frac12-y_2\big)(y_1-y_2)^{k}
\label{e:delta2}
\end{equation}
By choosing $\eta$ small enough and then $n$ large enough, we can ensure that both \eqref{e:delta1} and \eqref{e:delta2} are satisfied. We then set $A=\big\{1^{n+\ell} i \mid 0 \le \ell\le k, \ell \text{ even},  i\in\{0,2\}\big\}$.

Given $\omega\neq\sigma\in A$, the intervals $I_\omega$ and $I_\sigma$ have disjoint interior since $\omega$ and $\sigma$ are not prefixes one of the other.
Last, for all $\omega=1^{n+\ell} i$ and $\sigma=1^{n+m} j$ in $A$:
\[ I_\sigma = H_{1^n}(I_{1^m j})\subset H_{1^n}([0,1])\subset V_{1^n}\lr{\lr{\frac12-\eta,\frac12+\eta}} \subset V_{1^n}(J_{1^\ell i}) = J_\omega. \]
\end{proof}

To treat the second case of Theorem \ref{thm:horseshoes}, the difficulty will be that we do not have the fixed point $(\frac12,\frac12)$ as reference. We will have to ensure that the almost horizontal line $D_{\omega_0}$ stays far from the upper and lower sides of the unit square, as otherwise the ``range'' of the identity in our local coordinates will be impossible to cover with intervals $J_{\omega_+}$.

\subsection{An open region of the parameter space}

We turn to the case when $T=Z_p$ satisfies assumption \ref{enumi:horse2} in Theorem \ref{thm:horseshoes}.

We consider for each $(y_1,y_2)$ the affine Iterated Function System $(V_\omega)_{\omega\in\{0,2\}}$ generated by $V_0$ and $V_2$ as defined above (dependence on $(y_1,y_2)$ kept implicit), which is related to (asymmetric) Bernoulli convolutions.

Recall that $B = \{(y_1,y_2)\in(0,1)^2 \mid y_1^2>y_2 \text{ and } y_1>(2-y_2)y_2 \}$, i.e. $B$ is the set of parameters such that $V_{00}(1)>V_2(0)$ and $V_{22}(0)<V_0(1)$. 


\subsubsection{Properties of the vertical IFS}

We start with some basic properties of $(V_\omega)_{\omega\in\{0,2\}^*}$, assuming $(y_1,y_2)\in B$.
We consider the functional $S$ acting on the space of bounded functions $\mathbb{R}\to\mathbb{R}$ by
\[Sf(x) := f\circ V_0^{-1}(x) + f\circ V_2^{-1}(x) = f\Big(\frac{x}{y_1}\Big) + f\Big(\frac{x-y_2}{1-y_2}\Big)\]
and we denote by $\one$ the characteristic function of the interval $[0,1]$, and by $\one_I$ the characteristic function of an interval $I$. Observe that $S\one_I = \one_{V_0(I)}+\one_{V_2(I)}$ so that for all $n\in\mathbb{N}$ and $x\in[0,1]$, $S^n\one(x)$ is the number of words $\omega\in\{0,2\}^*$ of length $n$ such that $x\in V_\omega([0,1])$.

\begin{lemm}
For all $\varepsilon>0$, there exist $\delta>0$ and $n\in\mathbb{N}$ such that $S^n\one_{[\delta,1-\delta]}\ge 2\one_{[\varepsilon,1-\varepsilon]}$.
\end{lemm}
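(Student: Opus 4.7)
The plan is to show that for $\delta>0$ small enough, the sequence $S^n f_\delta$ with $f_\delta := \one_{(\delta,1-\delta)}$ eventually dominates $2\one_{[\varepsilon,1-\varepsilon]}$ pointwise; since $\one_{[\delta,1-\delta]}\geq f_\delta$, this yields the lemma. A direct computation using $y_1,1-y_2<1$ and $y_1>y_2$ shows that for small $\delta$ one has $V_0((\delta,1-\delta)) \cup V_2((\delta,1-\delta)) \supseteq (\delta,1-\delta)$, hence $Sf_\delta \geq f_\delta$ pointwise and $n\mapsto S^n f_\delta$ is non-decreasing. Moreover, $S^n f_\delta = \sum_{|\omega|=n}\one_{V_\omega((\delta,1-\delta))}$ is a sum of indicators of open intervals, hence lower semi-continuous, so the level sets $A_n := \{S^n f_\delta \geq 2\}$ are open and nested.

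The first building block is a \emph{base interval}: for $\delta$ small, $Sf_\delta$ equals $2$ on the open interval $(a_0,b_0)$ of those $x$ for which both $V_0^{-1}(x)$ and $V_2^{-1}(x)$ lie in $(\delta,1-\delta)$. This interval converges to $(y_2,y_1)$ as $\delta\to 0$, and by decreasing $\delta$ further I impose the two ratio inequalities $b_0/a_0>1/y_1$ and $(1-a_0)/(1-b_0)>1/(1-y_2)$. Their limiting forms as $\delta\to 0$ read $y_1^2>y_2$ and $y_1>(2-y_2)y_2$, i.e., precisely the defining inequalities of the domain $B$; this is where the hypothesis $(y_1,y_2)\in B$ enters.

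The main step, and the one I expect to be the main obstacle, is the covering property $\bigcup_{\omega \in \{0,2\}^*} V_\omega((a_0,b_0)) \supseteq (0,1)$. For $x\in(0,a_0]$ I iterate $T_0 := V_0^{-1}$: the orbit $T_0^k(x)=x/y_1^k$ is strictly increasing with constant multiplicative step $1/y_1 < b_0/a_0$, so it cannot leap across $(a_0,b_0)$ and must land inside at some step $k$, providing the witness word $0^k$. Symmetrically, for $x\in[b_0,1)$ iterating $T_2 := V_2^{-1}$ yields a witness word $2^k$, now via the dual inequality $(1-a_0)/(1-b_0)>1/(1-y_2)$. For $x\in(a_0,b_0)$ the empty word works. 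The reason I expect this step to be the trickiest is that both inequalities in $B$ are needed, one to control the escape toward $0$ under $T_0$-orbits and the other the escape toward $1$ under $T_2$-orbits.

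To conclude, let $x\in(0,1)$ and let $\omega_0$ be a witness word given by the covering step. Since $V_{\omega_0}^{-1}(x)\in(a_0,b_0)$, the base interval property supplies two distinct one-letter extensions $\tau_1,\tau_2\in\{0,2\}$ with $V_{\omega_0\tau_i}^{-1}(x)\in(\delta,1-\delta)$, so $S^{|\omega_0|+1} f_\delta(x)\geq 2$. By monotonicity $x\in A_n$ for all large $n$, so $\{A_n\}_n$ is an open cover of $(0,1)$ by nested sets; compactness of $[\varepsilon,1-\varepsilon]$ produces a single $N$ with $A_N \supseteq [\varepsilon,1-\varepsilon]$, which completes the proof.
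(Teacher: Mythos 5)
Your argument is correct, and it reaches the conclusion by a genuinely different route than the paper. The paper works directly with $S^n\one_{[a,b]}$ and proves by induction an explicit two-interval lower bound $S^{n+1}\one_{[a,b]}\ge \one_{[V_{0^{n+1}}(a),V_{2^{n+1}}(b)]}+\one_{[V_{0^n2}(a),V_{2^n0}(b)]}$, using the $B$-inequalities in the form $V_{00}(1)>V_{20}(0)$ and $V_{22}(0)<V_{02}(1)$ to get the induction started; the conclusion then follows from $V_{0^n}(\delta),V_{0^{n}2}(\delta)\to 0$ and $V_{2^n}(1-\delta),V_{2^n0}(1-\delta)\to 1$. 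You instead isolate a monotonicity structure ($Sf_\delta\ge f_\delta$, needing only $y_1>y_2$) and a renewal structure (the base interval $(a_0,b_0)$ and the covering of $(0,1)$ by $\bigcup_\omega V_\omega((a_0,b_0))$), and close via lower semicontinuity and compactness of $[\varepsilon,1-\varepsilon]$. Both proofs use exactly the two $B$-inequalities; you reinterpret them as multiplicative-step bounds $1/y_1<b_0/a_0$ and $1/(1-y_2)<(1-a_0)/(1-b_0)$, which is a pleasant and transparent reformulation that explains \emph{why} those two inequalities are needed (one controls the $T_0$-orbit toward $0$, the other the $T_2$-orbit toward $1$). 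The paper's approach is more quantitative — it tracks explicit words $0^{n+1}$, $0^n2$, $2^{n+1}$, $2^n0$ and hence explicit $n$ — which it needs anyway in the subsequent Lemma \ref{l:cover} when it iterates the present lemma; your compactness argument is cleaner but less constructive. One small point worth making explicit in a write-up: for the witness words $0^k$ (resp.\ $2^k$) you should note that the iterate $T_0^k(x)$ (resp.\ $T_2^k(x)$) lands in $(a_0,b_0)\subset(0,1)$, which guarantees $x\in V_{0^k}((0,1))$ so the inverse branch is genuinely defined; you implicitly use this but do not say so.
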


\begin{proof}
Let $a,b\in(0,1)$ such that $a<b$. Whenever $a$ is small enough and $b$ large enough, more precisely whenever $V_2(a)<V_0(b)$, we have:
\[S\one_{[a,b]} = \one_{[V_0(a),V_0(b)]} + \one_{[V_2(a),V_2(b)]} = \one_{[V_0(a),V_2(b)]} + \one_{[V_2(a),V_0(b)]}.\]
Observe that $V_0(a)<a$ and $V_2(b)>b$, so we can apply this to $\one_{[V_0(a),V_2(b)]}$ to compute
\[S^2\one_{[a,b]} = \one_{[V_{00}(a),V_{22}(b)]} + \one_{[V_{20}(a),V_{02}(b)]} + \one_{[V_{02}(a),V_{00}(b)]} + \one_{[V_{22}(a),V_{20}(b)]}.\]
Since $(y_1,y_2)\in B$, $V_{00}(1)>V_2(0) = V_{20}(0)$ and $V_{22}(0)< V_0(1) = V_{02}(1)$. For $a$ small enough and $b$ large enough we thus have
$V_{00}(b)>V_{20}(a)$ and $V_{22}(a)< V_{02}(b)$, from which it follows
\[S^2\one_{[a,b]} \ge \one_{[V_{00}(a),V_{22}(b)]} + \one_{[V_{02}(a),V_{20}(b)]}.\]
Fix $a$ and $b$ satisfying all above hypotheses: $V_2(a)<V_0(b)$, $V_{00}(b)>V_{20}(a)$ and $V_{22}(a)< V_{02}(b)$. Using that for all $x\in(0,1)$, $V_2(x)>x$ and $V_0(x)<x$, for all $n$ we have
\[V_{02^n0}(b) > V_{00}(b) >V_{20}(a) >V_{20^{n+1}}(a)\]
and similarly $V_{20^n2}(a)<V_{02^{n+1}}(b)$.
This is exactly what is needed to prove by induction that for all $n\in\mathbb{N}$,
\[S^{n+1}\one_{[a,b]} \ge \one_{[V_{0^{n+1}}(a),V_{2^{n+1}}(b)]} + \one_{[V_{0^n2}(a),V_{2^n0}(b)]}.\]

Applying this to $a=\delta$ and $b=1-\delta$ for $\delta>0$ small enough and using $\lim_n V_{0^{n+1}}(\delta)=\lim_n V_{0^n2}(\delta)=0$ and $\lim_n V_{2^{n+1}}(1-\delta) = \lim_n V_{2^n0}(1-\delta)=1$, we obtain the desired conclusion.
\end{proof}

\begin{lemm}\label{l:cover}
For all $\varepsilon>0$ and all $k>0$, there exist $\delta>0$ and $n\in\mathbb{N}$ such that $S^n\one_{[\delta,1-\delta]}\ge k\one_{[\varepsilon,1-\varepsilon]}$.
\end{lemm}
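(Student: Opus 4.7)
The plan is to deduce Lemma \ref{l:cover} from the previous lemma by iterating it, exploiting the fact that the operator $S$ is positive and linear on the cone of nonnegative bounded functions. In particular, $S$ is order-preserving ($f\le g$ implies $Sf\le Sg$), commutes with multiplication by nonnegative scalars, and these properties pass to iterates $S^n$.

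First I would fix $\varepsilon>0$ and $k>0$, and choose $m\in\mathbb{N}$ with $2^m\ge k$. Then I would build recursively a decreasing sequence $\varepsilon=\varepsilon_0>\varepsilon_1>\cdots>\varepsilon_m>0$ and integers $n_1,\dots,n_m$ by applying the previous lemma $m$ times: given $\varepsilon_j$, the previous lemma (with $\varepsilon$ replaced by $\varepsilon_j$) provides $\varepsilon_{j+1}>0$ and $n_{j+1}\in\mathbb{N}$ such that
\[ S^{n_{j+1}}\one_{[\varepsilon_{j+1},1-\varepsilon_{j+1}]}\ge 2\,\one_{[\varepsilon_j,1-\varepsilon_j]}. \]

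Applying $S^{n_1}$ to the inequality at level $j=1$ and using positivity and linearity, one gets
\[ S^{n_1+n_2}\one_{[\varepsilon_2,1-\varepsilon_2]}\ge 2\,S^{n_1}\one_{[\varepsilon_1,1-\varepsilon_1]}\ge 4\,\one_{[\varepsilon,1-\varepsilon]}, \]
and an immediate induction on $j$ yields
\[ S^{n_1+\cdots+n_m}\one_{[\varepsilon_m,1-\varepsilon_m]}\ge 2^m\,\one_{[\varepsilon,1-\varepsilon]}\ge k\,\one_{[\varepsilon,1-\varepsilon]}. \]
Setting $\delta=\varepsilon_m$ and $n=n_1+\cdots+n_m$ gives the conclusion.

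There is essentially no obstacle here beyond checking that the two structural properties of $S$ (monotonicity and positive homogeneity) are preserved under iteration; the main lemma has already done the geometric work of showing that coverings can be doubled, and any desired multiplicity is then obtained by compounding this doubling finitely many times.
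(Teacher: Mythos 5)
Your proof is correct and follows the same approach as the paper: iterate the previous lemma $m$ times with $2^m\ge k$, constructing a decreasing chain of $\varepsilon$-values, then compose the resulting inequalities using the positivity and order-preservation of $S$. The only difference is that you spell out the composition step (applying $S^{n_1}$, $S^{n_1+n_2}$, etc.) that the paper leaves implicit.
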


\begin{proof}
Fix $\varepsilon>0$ and $k>0$. The previous lemma permits for any $\ell\in\mathbb{N}$ to find $\delta_1,\dots,\delta_\ell>0$ and $n_1,\dots, n_\ell\in\mathbb{N}$ such that 
$S^{n_1}\one_{[\delta_1,1-\delta_1]} \ge 2\one_{[\varepsilon,1-\varepsilon]}$ and for all $j\in\{2, \dots,\ell\}$,
\[S^{n_j}\one_{[\delta_{j},1-\delta_{j}]} \ge 2\one_{[\delta_{j-1},1-\delta_{j-1}]}.\]
It is then sufficient to choose $\ell$ such that $2^\ell\ge k$ and set $n=n_1+\dots+n_\ell$ and $\delta=\delta_\ell$.
\end{proof}

Before we deduce the result we shall need in the proof of Theorem \ref{thm:horseshoes}, let us note that an experimental exploration indicates that the set of parameters $(y_1,y_2)$ for which the conclusion of Lemma \ref{l:cover} holds is larger than $B$, but smaller than the triangle $\{y_1>y_2\in(0,1)\}$. Our proof does not use $(y_1,y_2)\in B$ behind this point, so we could replace condition \ref{enumi:horse2} in Theorem \ref{thm:horseshoes} by the conclusion of Lemma \ref{l:cover}; this would be more general, but less explicit.

\begin{prop}\label{p:cover}
For all $\varepsilon>0$ and all $k>0$, there exist $\eta>0$ and $n\in\mathbb{N}$ such that for all $y\in[\varepsilon,1-\varepsilon]$, 
\[\#\big\{\omega\in\{0,2\}^n \,\big|\, [y-\eta,y+\eta]\subset V_\omega([0,1])\big\}\ge k.\]
\end{prop}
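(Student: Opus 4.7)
The plan is to deduce Proposition \ref{p:cover} directly from Lemma \ref{l:cover} by thickening the interval $[\delta,1-\delta]$ into $[0,1]$ at the cost of a uniform safety margin.

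First I would unpack the functional statement of Lemma \ref{l:cover}. Since $S\one_I = \one_{V_0(I)}+\one_{V_2(I)}$, a straightforward induction shows that
\[ S^n\one_I = \sum_{\omega\in\{0,2\}^n} \one_{V_\omega(I)}, \]
so $S^n\one_{[\delta,1-\delta]}(y)$ is exactly the number of words $\omega\in\{0,2\}^n$ with $y\in V_\omega([\delta,1-\delta])$. Applying Lemma \ref{l:cover} to the given $\varepsilon$ and $k$ yields $\delta>0$ and $n\in\mathbb{N}$ such that, for every $y\in[\varepsilon,1-\varepsilon]$, there are at least $k$ words $\omega\in\{0,2\}^n$ with $y\in V_\omega([\delta,1-\delta])$.

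Next I would convert membership in $V_\omega([\delta,1-\delta])$ into a two-sided safety margin inside $V_\omega([0,1])$. Each $V_\omega$ is an orientation-preserving affine bijection of $[0,1]$ onto $V_\omega([0,1])$ whose slope is $L_\omega = y_1^{a}(1-y_2)^{n-a}$, where $a$ is the number of $0$'s in $\omega$. Hence if $y=V_\omega(z)$ with $z\in[\delta,1-\delta]$, the distance from $y$ to either endpoint of $V_\omega([0,1])$ is at least $\delta L_\omega \ge \delta\,m^n$ where $m:=\min(y_1,1-y_2)>0$. Setting
\[ \eta := \delta\, m^n \]
gives a margin that does not depend on $\omega$, and so $[y-\eta,y+\eta]\subset V_\omega([0,1])$ for each of the (at least $k$) words produced by Lemma \ref{l:cover}. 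This yields the conclusion.

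I expect no real obstacle: the only substantial content is Lemma \ref{l:cover} itself, and the step above is just the observation that an affine map of positive slope $L_\omega\ge m^n$ pushes the inner margin $\delta$ forward to $\delta m^n$, with the uniformity in $\omega$ following from the uniform lower bound on $L_\omega$. The proof is essentially a one-paragraph reformulation of the lemma.
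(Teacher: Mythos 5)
Your proof is correct and follows the paper's argument essentially verbatim: invoke Lemma \ref{l:cover} to get $\delta$ and $n$, then set $\eta=\delta m^n$ with $m=\min(y_1,1-y_2)$ (the paper calls this $v$), using the uniform lower bound $m^n$ on the slope of $V_\omega$ to propagate the margin $\delta$ forward. Nothing to add.
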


\begin{proof}
Let $\varepsilon>0$ and use Lemma \ref{l:cover} to find $n\in\mathbb{N}$ and $\delta>0$ such that $S^n\one_{[\delta,1-\delta]}\ge k\one_{[\varepsilon,1-\varepsilon]}$. Let $v=\min(y_1,1-y_2)$ and $\eta=\delta v^n$. Let $y\in[\varepsilon,1-\varepsilon]$; then there are at least $k$ words $\omega\in \{0,2\}^n$ such that $y\in V_\omega([\delta,1-\delta])$. For each such $\omega$, we have $V_\omega(0)\le V_\omega(\delta)-\delta v^n\le y-\eta$ and similarly $y+\eta\le V_\omega(1)$, so that $[y-\eta,y+\eta]\subset V_\omega([0,1])$.
\end{proof}

%
%
%

\subsubsection{Finding horseshoes}

Given $\varepsilon>0$, a line $L$  is said to be $\varepsilon$-\emph{transverse} to $R=[0,1]^2$ whenever $\varnothing\neq L\cap R \subset [0,1]\times [\varepsilon,1-\varepsilon]$. $L$ is said to be $\varepsilon$-\emph{transverse} to $R_\omega$ if $P_\omega^{-1}(L)$ is $\varepsilon$-transverse to $R$. Writing $R_\omega=[a,b]\times [c,d]$, this condition is equivalent to $\varnothing\neq L\cap R_\omega\subset [a,b]\times [c+\varepsilon(d-c),d-\varepsilon(d-c)]$. This definition is illustrated in Figure \ref{f:transverse}.
\begin{figure}[htp]
\centering
\includegraphics[width=.4\linewidth]{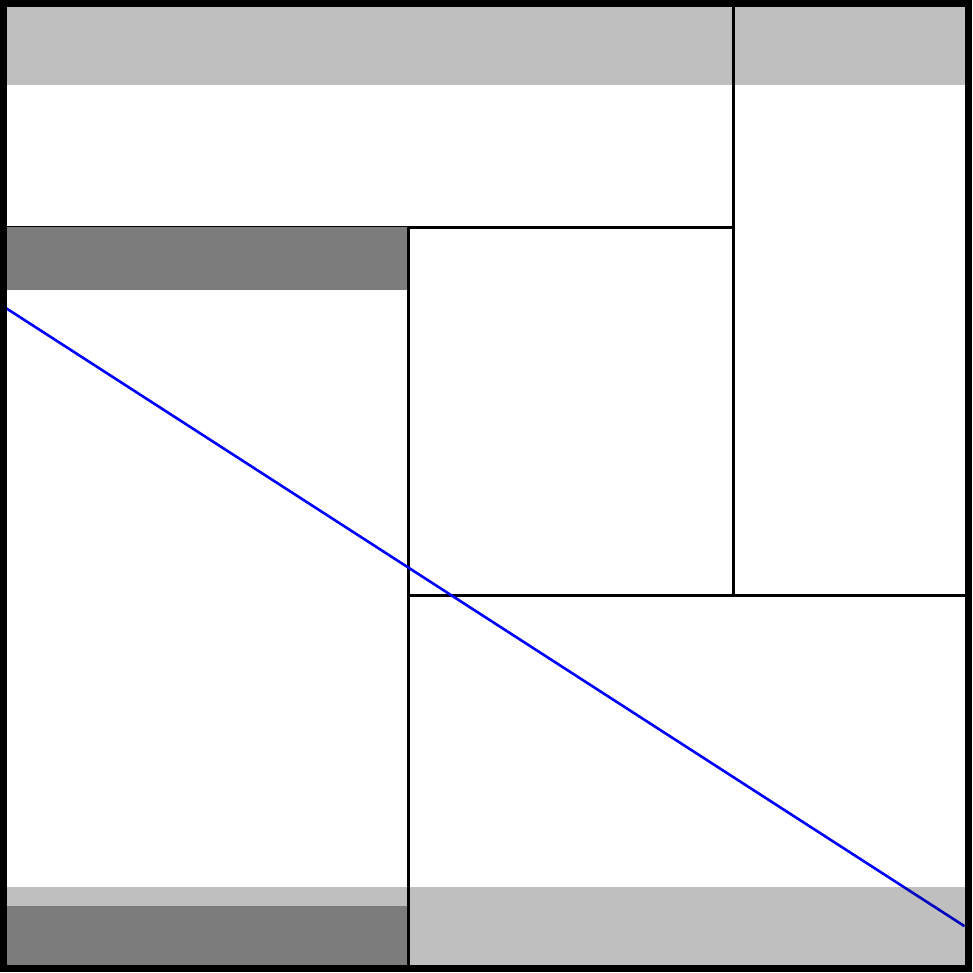}
\caption{If $\varepsilon$ is the height of the light grey bands, then the blue line is not $\varepsilon$-transverse to $R$, but is $\varepsilon$-transverse to $R_0$. }\label{f:transverse}
\end{figure}

We prove Theorem \ref{thm:horseshoes} under condition \ref{enumi:horse2} in two steps, the first stated as a lemma. The general strategy is to zoom in using $P_\omega^{-1}$ to replace the diagonal by an almost horizontal, somewhat vertically centered line, as illustrated by Figure \ref{f:zoomins}. 
\begin{figure}[htp]
\centering
\includegraphics[width=.8\linewidth]{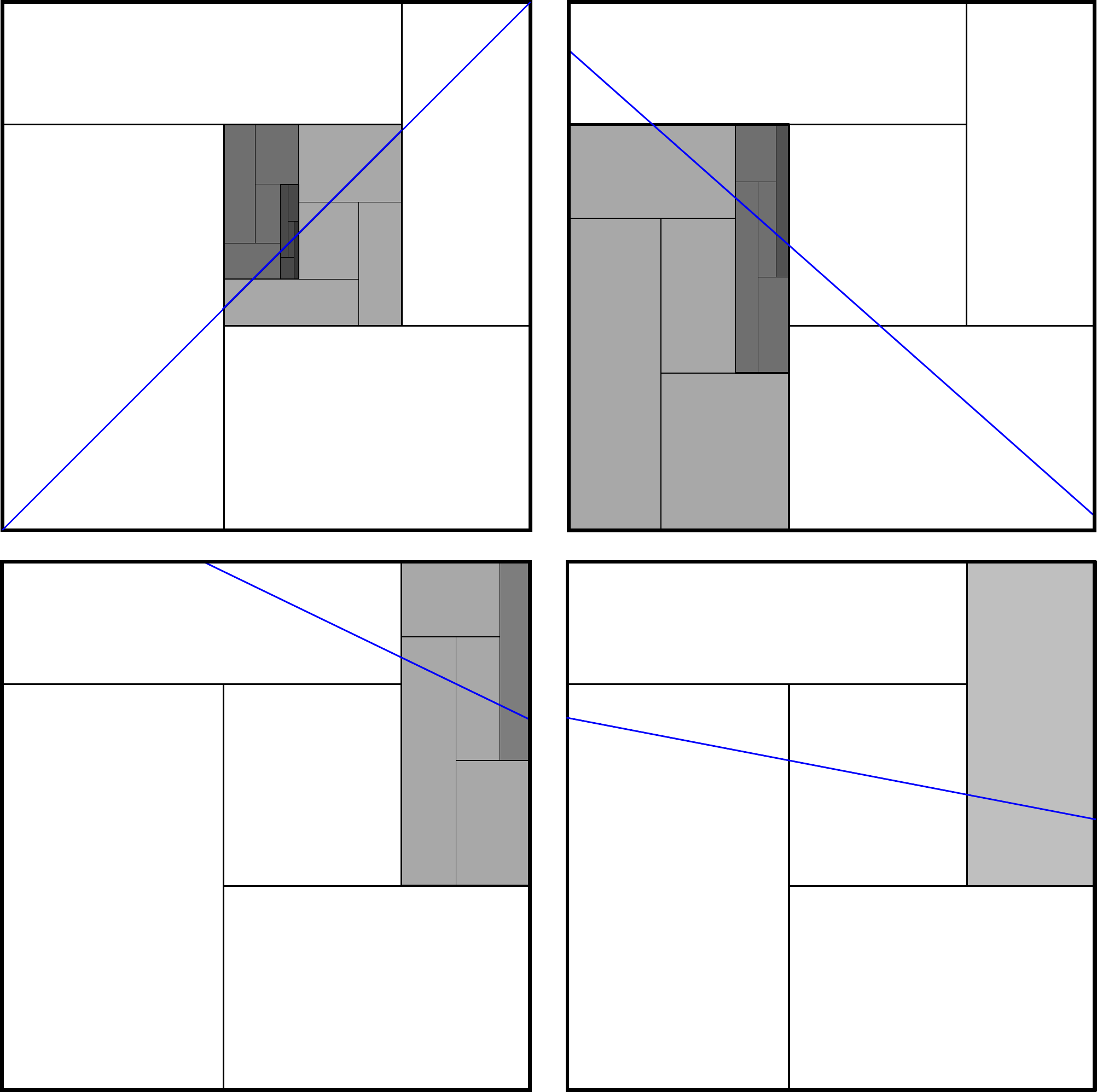}
\caption{Upper-left: the square $R$ and the diagonal $D$ (blue), with successive $R_\omega$ we shall zoom in to in shades of grey. Each successive picture is an affine zoom-in of the light grey area in the previous one. The choice of the letter is done to ensure good transversality. }\label{f:zoomins}
\end{figure}

\begin{lemm}\label{l:transverse}
There exist $\varepsilon=\varepsilon(p)>0$ and $\ell_0=\ell_0(p)\in\mathbb{N}$ such that for all $\ell\ge \ell_0$, one can find a word $\omega$ of length $\ell$ such that the diagonal $D=\{(x,x):x\in\mathbb{R}\}$ is $\varepsilon$-transverse to $R_\omega$.
\end{lemm}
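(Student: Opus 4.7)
The plan is an iterative zoom-in construction: I build $\omega$ letter by letter, each time appending a letter that preserves a uniform $\varepsilon$-transversality. The key reformulation is that $D$ is $\varepsilon$-transverse to $R_\omega$ if and only if $L_\omega := P_\omega^{-1}(D)$ is $\varepsilon$-transverse to $R$. Since $P_\omega = (H_\omega, V_\omega)$ is a product of affine maps, $L_\omega$ is an affine line in $R$ with slope of absolute value at most $\lambda_{\min}^{-|\omega|}$. For large $|\omega|$, $L_\omega$ is thus almost horizontal, at some height $y^*(\omega) \in \mathbb{R}$, and $\varepsilon$-transversality reduces essentially to $y^*(\omega) \in [\varepsilon, 1-\varepsilon]$.

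\textbf{Inductive step.} Suppose $L_\omega$ is $\varepsilon$-transverse to $R$ with sufficiently small slope. To find $i \in \{0, 2\}$ such that $L_{\omega i} = P_i^{-1}(L_\omega)$ is again $\varepsilon$-transverse to $R$, I need $y^*(\omega)$ to lie in the $\varepsilon$-interior of $J_i$. A direct computation shows that the $\varepsilon$-interiors of $J_0 = [0, y_1]$ and $J_2 = [y_2, 1]$ together cover $[\varepsilon, 1-\varepsilon]$ whenever $\varepsilon \le \frac{y_1 - y_2}{1 + y_1 - y_2}$, which fixes $\varepsilon = \varepsilon(p) > 0$. Moreover, after rescaling by $P_i^{-1}$, the slope is divided by $\lambda_i \ge \lambda_{\min} > 1$, so it decreases geometrically and the approximation to a horizontal line keeps improving. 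Starting from $\omega_0$ (produced below) and iterating this step $\ell - |\omega_0|$ times delivers a word of every length $\ell \ge \ell_0 := |\omega_0|$, as required.

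\textbf{Base case and main obstacle.} I must produce an initial word $\omega_0$ of bounded length $n_0 = n_0(p)$ with $L_{\omega_0}$ already $\varepsilon$-transverse to $R$. This is where the hypothesis $(y_1, y_2) \in B$ enters, via Proposition \ref{p:cover}. The height satisfies the approximate recursion $y^*(\omega 0) = V_0^{-1}(y^*(\omega))$ and $y^*(\omega 2) = V_2^{-1}(y^*(\omega)) + O(|I_\omega|/|J_\omega|)$, starting from $y^*(\emptyset) = 0$. Since $V_0^{-1}$ and $V_2^{-1}$ are expanding and, under $(y_1, y_2) \in B$, the IFS $\{V_0, V_2\}$ has the rich covering property provided by Proposition \ref{p:cover}, I can steer $y^*$ into $[\varepsilon, 1-\varepsilon]$ by an appropriate choice of letters; the error term is controlled since $|I_\omega|/|J_\omega| \le \lambda_{\min}^{-|\omega|}$ decays geometrically, so for $|\omega|$ large enough the exact and approximate dynamics agree to arbitrary precision. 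The main obstacle is precisely this base case: while the dynamics of $y^*$ is well-defined and expanding, extracting a concrete $\omega_0$ that simultaneously aligns the horizontal data (position of $I_\omega$) and the vertical data (interval $J_\omega$), both indexed by the same word, is exactly the quantitative content encoded by Proposition \ref{p:cover} under the assumption $(y_1, y_2) \in B$.
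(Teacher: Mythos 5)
Your inductive step (once the zoomed-in line is almost horizontal and $\varepsilon$-transverse, one of $P_0^{-1}$ or $P_2^{-1}$ preserves $\varepsilon$-transversality and further flattens the slope, since $V_0([\varepsilon,1-\varepsilon])\cup V_2([\varepsilon,1-\varepsilon])\supset[\varepsilon,1-\varepsilon]$ for $\varepsilon$ small enough) matches the paper's mechanism. But there is a genuine gap in the base case, and your proposed fix is the wrong tool.

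First, a scope error: Lemma~\ref{l:transverse} does \emph{not} assume $(y_1,y_2)\in B$ — it is stated and proved for every hypersensitive zipper map. The set $B$ and Proposition~\ref{p:cover} only enter later, in the proof of Theorem~\ref{thm:horseshoes}(\ref{enumi:horse2}), where they are used \emph{in addition to} Lemma~\ref{l:transverse} to produce many words $\sigma$ with $J_{\omega\sigma}\supset I_\omega$. Importing $B$ into the proof of the lemma would weaken its conclusion and break the structure of the paper's argument. Second, Proposition~\ref{p:cover} is about covering: it supplies many $\sigma\in\{0,2\}^n$ with $[y-\eta,y+\eta]\subset V_\sigma([0,1])$, i.e.\ $V_\sigma^{-1}$ maps a small neighbourhood of $y$ into $[0,1]$. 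It does \emph{not} let you steer a height sitting near $0$ or $1$ back into $[\varepsilon,1-\varepsilon]$, which is what your ``recursion starting from $y^*(\varnothing)=0$'' needs — and $y^*(\varnothing)$ isn't even well-defined, since the original diagonal is far from horizontal. You have correctly identified the base case as the main obstacle, but you have not resolved it.

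The paper closes the base case by an elementary topological argument with no parameter condition at all: one applies $P_1^{-1}$ once to obtain a line of slope in $(-1,0)$ that meets the interior of a vertical side of $R$; a simple intermediate-value observation shows that any such line meets the interior of a vertical side of $R_0$ or $R_2$, so one can iterate $P_0^{-1}$ or $P_2^{-1}$ while \emph{staying in this class}, driving the slope to $0$; once the slope is below $\varepsilon$, if the line is not yet $\varepsilon$-transverse it is because it exits near the top or near the bottom, and then one repeatedly applies $P_2^{-1}$ or $P_0^{-1}$ (respectively), each step moving the height multiplicatively toward $[2\varepsilon,1-\varepsilon]$, with a minimality argument controlling the overshoot. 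This "stay in the class $\mathcal{L}$, then steer monotonically" device is exactly the missing idea in your proposal; without it (or an equivalent), the base case does not go through.
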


\begin{proof}
The strategy of proof
is to ``zoom in'' using some $P_{\omega_0}^{-1}$ to get an almost horizontal line transversal to $R$. Then, using almost horizontality, it will be easy to expand $\omega_0$ while preserving transversality.

Given any word $\omega$, we set $D_\omega = P_\omega^{-1}(D)$ (note that $P_\omega$ extends to an affine bijection of $\mathbb{R}^2$, and whenever needed $P_\omega^{-1}$ denotes the inverse of this extension). For some $\omega$, $D_\omega\cap R=\varnothing$ but we will choose our words so that these lines are not considered. Observe that the slope of $D_\omega$ is at most $\lambda_{\min}^{-\lvert\omega\rvert}$ in absolute value.

Let $\mathcal{L}$ be the set of lines meeting the interior of at least one of the vertical sides of $R$, and of slope in $(-1,0)$. Since $\lambda_{\min}>1$, the diagonal must meet the interior of at least one of the vertical sides of the central rectangle $R_1$, thus $D_1\in\mathcal{L}$.

Consider any $L\in\mathcal{L}$. It must meet at least one of the interiors of the vertical sides of at least one of $R_0$ or $R_2$, so that we can find $i\in\{0,2\}$ such that $P_i^{-1}(L)\in\mathcal{L}$. Starting from $D_1$, we can thus construct inductively a sequence $(D_{\omega_n})_{n>0}$ (with $\lvert\omega_n\rvert = n$) of elements of $\mathcal{L}$ whose slopes converge exponentially fast to $0$ from below.

Fix $\varepsilon=\varepsilon(p)>0$ small enough to ensure that all lines of slope in $[-\varepsilon,0]$ and $\varepsilon$-transversal to $R$ are $\varepsilon$-transversal to at least one of $R_0$ or $R_2$ (this is possible since $y_2<y_1$ and we consider lines of negative slope). Taking $n=n_0$ large enough, we can ensure the slope of $D_{\omega_{n_0}}$ is in $[-\varepsilon,0)$. If $D_{\omega_{n_0}}$ is $\varepsilon$-transverse to $R$, then we take $\omega_0=\omega_{n_0}$, $\ell_0=\lvert\omega_0\rvert$ and we are done: by assumption, we can inductively extend the word $\omega_0$ into words $\omega$ of arbitrary length such that $D_\omega$ is $\varepsilon$-transverse to $R$, i.e. $D$ is $\varepsilon$-transverse to $R_\omega$.

Otherwise, we continue applying $P_0^{-1}$ or $P_2^{-1}$; what is left to prove is that it is possible to ultimately produce a line $D_{\omega_{n_1}}$ that is $\varepsilon$-transverse to $R$. Up to reducing $\varepsilon$ further, we can assume that $P_0^{-1}([0,x_1]\times[0,2\varepsilon])\subset [0,1]\times[0,1-2\varepsilon]$ and $P_2^{-1}([x_2,1]\times[1-2\varepsilon,1])\subset [0,1]\times[2\varepsilon,1]$. Since its slope is between $-\varepsilon$ and $0$, for $D_{\omega_{n_0}}$ not to be $\varepsilon$-transverse to $R$ one of the two following conditions must hold: either $D_{\omega_{n_0}}$ meets the interior of the left side of $R$ at height less than $2\varepsilon$, or it meets the interior of the right side of $R$ at height more than $1-2\varepsilon$. We treat the first case, the second one being symmetrical.
We set $\omega_{n_0+k}=0^k\omega_{n_0}$, i.e. $D_{\omega_{n_0+k}}=P_0^{-k}(D_{\omega_{n_0}})$, where $k$ is the least integer such that $D_{\omega_{n_0+k}}$ meets the left side of $R$ at height at least $2\varepsilon$. For $\varepsilon$ small enough (precisely, $\varepsilon<y_1(2+2y_1)$), by minimality this height is between $2\varepsilon$ and $1-\varepsilon$, and $D_{\omega_{n_0+k}}$ is $\varepsilon$-transverse to $R$.
\end{proof}

\begin{proof}[Proof of Theorem \ref{thm:horseshoes} under assumption \ref{enumi:horse2}]
We fix $k\in\mathbb{N}$.  We shall find a word $\omega$ and a set $A$ of at least $k$ words, no one a  prefix of another, such that $J_{\omega\sigma}\supset I_{\omega}$ for all $\sigma\in A$. Then the $(I_{\omega\sigma})_{\sigma\in A}$ will have pairwise disjoint interiors, and $T(I_{\omega\sigma})=J_{\omega\sigma}\supset I_{\omega} \supset I_{\omega\tau}$ for all $\sigma,\tau\in A$.

Let $\ell_0\in\mathbb{N}, \varepsilon>0$ be as given by Lemma \ref{l:transverse}, then let $\eta>0$ and $n\in\mathbb{N}$ be as given by Proposition \ref{p:cover}. Let $\ell\ge \ell_0$ and $\omega$ be as given by Lemma \ref{l:transverse} with $\ell$ large enough that the slope of $D_\omega$ is less than $2\eta$ in absolute value. We consider the ``range'' of $D_\omega$, i.e. the interval $[a,b]$ where $a,b$ are the heights at which $D_\omega$ meets the right and left sides of $R$; by construction $b-a<2\eta$ and $\varepsilon \leq a \leq b<1-\varepsilon$. Let $y=\frac{a+b}{2}$, so that $y\in[\varepsilon,1-\varepsilon]$ and $[a,b]\subset[y-\eta,y+\eta]$. Finally, let
\[A= \#\big\{\sigma\in\{0,2\}^n \,\big|\, [y-\eta,y+\eta]\subset V_\sigma([0,1])\big\}.\]
Elements of $A$ are not prefix one to another, since they all have the same length. By Proposition \ref{p:cover}, there are at least $k$ elements in $A$ and $V_\omega^{-1}(I_\omega) \subset V_\sigma([0,1])$ for all $\sigma\in A$. By applying $V_{\omega}$, this means $J_{\omega\sigma}\supset I_\omega$, as desired.
\end{proof}

%

\subsection{Realisation of orders by orbits}\label{s:orders}

We now prove Corollaries \ref{cm:orders} and \ref{cm:universality}. We dismiss the notation of the beginning of the section, and prove the corollaries for any continuous map $T$ satisfying the conclusion of Theorem \ref{thm:horseshoes} (which is easily seen to be actually equivalent to the conclusion of Corollary \ref{cm:orders}).

\begin{proof}[Proof of Corollary \ref{cm:universality}]
Assume $\lvert\Omega\rvert=\{1,\dots,k\}$ and let $(I_1, \dots, I_k)$ be a horseshoe of $T$. For each periodic orbit $(i_1,\dots,i_p)$ of $S:\Omega\to\Omega$, we can find a $p$-periodic point $x$ of $T$ in $I_{i_1}$, whose orbits goes through $(I_{i_1},\dots,I_{i_p})$ in order ; we then define $\pi(i_j) = T^{j-1}(x)$ for all $j\in\{1,\dots, p\}$. All orbits of a finite dynamical system are ultimately periodic, so we only have left to define $\pi$ inductively on pre-periodic points, an $S$-antecedent $j$ of a point $i$ on which $\pi$ is already defined being sent to any $T$-antecedent of $T\circ \pi(i)$ in $I_j$.
\end{proof}

\begin{proof}[Proof of Corollary \ref{cm:orders}]
Assume $T:[0,1]\to[0,1]$ is a continuous map that admits horseshoes of arbitrary order. Let $k,\ell\in\mathbb{N}$ and consider 
any total strict order $\prec$ on the symbols $(s_i^j)_{1\le i\le k,0\le j\le \ell}$. We seek for points $x_1,\dots x_k\in [0,1]$ such that for all $i,j,i',j'$:
\[ s_i^j \prec s_{i'}^{j'} \Leftrightarrow T^jx_i < T^{j'}x_{i'}.\]

Given two closed subsets $I,J\subset [0,1]$, we write $I\le J$ whenever $\max I\le \min J$ and $I<J$ whenever $\max I<\min J$. Let $N$ be a positive integer that we shall made precise later on.
Let $I_1^0,I_2^0,\dots,I_{2N}^0$ be a horseshoe of $T$ numbered in increasing order, i.e. $I_i^0\le I_{i+1}^0$ for all $i\in\{1,\dots, 2N\}$. Then the family $(I_i=I_{2i}^0)_{1\le i\le N}$ is a horseshoe for $T$ with disjoint intervals (not only intervals of disjoint interiors). In particular $<$ induces a total strict order on $(I_i)_{1\le i\le N}$.

For all word $\omega=\alpha_0\alpha_1\dots \alpha_n\in \{1,\dots,N\}^*$, we let $I_\omega = I_{\alpha_0} \cap T^{-1}(I_{\alpha_1}) \cap \dots\cap T^{-n}(I_{\alpha_n})$, i.e. $I_\omega$ is the set of points whose orbit under $T$ runs over the $I_i$ as specified by $\omega$. We know each $I_\omega$ is a non-empty compact set, we can thus choose a point $y_\omega\in I_\omega$ for each word $\omega$ of length $\ell$, and by construction $T^j(y_{\alpha_0\alpha_1\dots \alpha_\ell}) < T^{j'}(y_{\beta_0\beta_1,\dots \beta_\ell}) \Leftrightarrow I_{\alpha_j} < I_{\beta_{j'}}$.

Assuming $N\ge 3k+2$, we can find a map $\alpha_0:\{1,\dots,k\} \to \{1,\dots,N\}$ such that
\begin{itemize}
\item $I_{\alpha_0(i)}<I_{\alpha_0(i')} \Leftrightarrow s_i^0 \prec s_{i'}^0$, and
\item there is a distance at least $N_0 = \frac{N-k}{k+1}-1 \ge 1$ between any two numbers in the family $(0,\alpha_0(1),\dots,\alpha_0(k),N)$
\end{itemize}
i.e. $\alpha_0$ selects intervals in the horseshoe that reflect the order on the $s_i^0$ and are almost evenly spread inside $\{1,\dots, N\}$.

Assuming $N$ is large enough, we can in the same way construct inductively on $j$ from $0$ to $\ell$ a sequence of positive integers $N_j$ and maps $\alpha_j:\{1,\dots,k\} \to \{1,\dots,N\}$ such that
\begin{itemize}
\item $I_{\alpha_j(i)}<I_{\alpha_{j'}(i')} \Leftrightarrow s_i^j \prec s_{i'}^j$, and
\item there is a distance at least $N_j = \frac{N_{j-1}-k}{(j+1)k+1}-1 \ge 1$ between any two numbers in the family containing $0$, $N$ and all the $\alpha_{j'}(i)$ for $0\le j'\le j$ and $1\le i\le k$.
\end{itemize}

Then the points $x_i=y_{\alpha_0(i)\alpha_1(i)\dots\alpha_\ell(i)}$ have the desired property and the proof of Corollary \ref{cm:orders} is complete.
\end{proof}


\section{Relative metric mean dimension and other high-complexity measurements}\label{s:meanDim}

Let us recall the metric mean dimension introduced by \cite{Lindenstrauss-Weiss2000mean}. If $T:\Omega\to\Omega$ is a dynamical system on a compact space $\Omega$ endowed with a metric $d$, one defines the Bowen metrics by
\[d_n(x,y) = \max\{d(T^k(x),T^k(y)) \mid 0\le k < n \} \qquad \forall n\in\mathbb{N}, \forall x,y\in\Omega.\]

A set $S\subset \Omega$ is said to be $(n,\varepsilon)$-separated when $d_n(x,y)\ge\varepsilon$ for all $x\neq y\in S$. Denoting by 
$N(d,T,\varepsilon,n)$ the cardinal of a largest $(n,\varepsilon)$-separated set, one defines the \emph{metric mean dimension relative to $d$} by
\[\mdim(T,d) := \liminf_{\varepsilon\to 0} \limsup_{n\to\infty} \frac{\log N(d,T,\varepsilon,n)}{n \log\frac{1}{\varepsilon}}.\]
In other words, a map with metric mean dimension (relative to $d$) $\delta$ has $N(d,T,\varepsilon,n)$ roughly of the order $(1/\varepsilon^\delta)^n$ for small fixed $\varepsilon$ as $n\to\infty$, or ``entropy of the order of $1/\varepsilon^\delta$ at scale $\varepsilon$.''

This is not a topological invariant, it depends on the choice of metric. For example, for every $\alpha\in(0,1)$ the function $d^\alpha$ is still a metric, and the mean dimension relative to $d^\alpha$ is $1/\alpha$ times the mean dimension relative to $d$: as soon as $T$ has positive mean dimension relative to some metric, one can find metrics for which it has arbitrarily high metric mean dimension. Lindenstrauss and Weiss introduced the topological invariant \emph{(absolute) metric mean dimension}
\[\mdim(T) := \inf_{d} \mdim(T,d)\]
where $d$ runs over all metrics on $\Omega$ inducing its topology. We shall use the adjective ``absolute'' when there is a risk of confusion with the metric mean dimension relative to a particular metric, but usually $\mdim(T)$ will simply be called the metric mean dimension of $T$.

The main goal of this quantity was to bound from above another topological invariant of high-complexity maps, the (topological) mean dimension, introduced by Gromov. We shall not give the definition here, as this quantity is of little interest in our case: as soon as $\Omega$ has finite topological dimension, the mean dimension of every continuous map on $\Omega$ vanishes. Examples of positive mean dimension system include the shifts on $([0,1]^k)^{\mathbb{N}}$, which have mean dimension $k$ by a theorem of Lindenstrauss and Weiss (and are thus not topologically conjugated one to another for different $k$).

A prominent question is whether metric mean dimension is always equal to mean dimension (a positive answer is conjectured; see e.g. \cite{lindenstrauss-Tsukamoto2019double}). As mentioned above, it is known from \cite{Lindenstrauss-Weiss2000mean} that the metric mean dimension is never lower than the mean dimension, and no example is known where they differ. While during the course of this research we had hoped that $\mdim(Z_p)>0$ for some $p$, we will on the contrary prove in Section \ref{s:vanishing} that $\mdim(T)=0$ for all interval maps, proving a small case of the conjecture.

However, zipper maps and similar examples deserve an invariant that quantifies their level of chaos beyond infinite entropy. The purpose of this section is to propose and study several quantities playing this role and bound them in the case of zipper (or more general) maps.

\subsection{Metric mean dimension relative to the Euclidean metric}

We first bound from below the metric mean dimension relative to the Euclidean metric $\lvert\cdot\rvert$ for the class of hypersensitive maps, with Theorem \ref{t:mdim} as a consequence in view of Proposition \ref{p:hypersensitivity}.
\begin{theo}\label{t:mdim-general}
If $T:[0,1]\to [0,1]$ is a $\beta$-hypersensitive interval map, then 
\[\mdim(T,\lvert\cdot\rvert)\ge 1-\beta.\]
\end{theo}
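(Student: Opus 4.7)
My plan is to construct explicitly, for every small $\varepsilon>0$ and every $n\in\N$, an $(n,\varepsilon)$-separated subset $S\subset[0,1]$ with $\lvert S\rvert$ growing essentially as $\varepsilon^{-(1-\beta)n}$. The set $S$ will consist of the leaves of a tree of nested closed sub-intervals of $[0,1]$ whose branching factor at each level is controlled by the hypersensitivity exponent $\beta$.

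The key technical ingredient is a standard \emph{inverse branch lemma}: for every continuous $f:J\to\R$ on an interval $J$ and every closed sub-interval $K=[c,d]\subset f(J)$, there exists a closed sub-interval $J'\subset J$ with $f(J')=K$. One starts from any preimages $a,b\in J$ of $c,d$ with $a<b$, replaces $a$ by the largest preimage $a^*$ of $c$ in $[a,b]$, then $b$ by the smallest preimage $b^*$ of $d$ in $[a^*,b]$, and uses the intermediate value theorem to rule out values of $f$ outside $[c,d]$ on $(a^*,b^*)$.

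Equipped with this lemma, I build the tree as follows. Set the root to $A^{(0)}=[0,1]$. Inductively, given a node $A^{(k-1)}\subset[0,1]$ such that $T^{k-1}(A^{(k-1)})$ is a prescribed closed interval of length $\delta_{k-1}$ (with $\delta_0=1$ and $\delta_k=\varepsilon$ for $k\ge 1$), hypersensitivity gives
\[
\lvert T^k(A^{(k-1)})\rvert = \bigl\lvert T\bigl(T^{k-1}(A^{(k-1)})\bigr)\bigr\rvert \ge C\delta_{k-1}^\beta .
\]
Inside this image one can fit $M_k:=\lfloor C\delta_{k-1}^\beta/(2\varepsilon)\rfloor$ closed sub-intervals of length $\varepsilon$ with pairwise gaps at least $\varepsilon$; thus $M_1\gtrsim C/(2\varepsilon)$ and $M_k\gtrsim C/(2\varepsilon^{1-\beta})$ for $k\ge 2$. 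For each such target $K$, the inverse branch lemma applied to the continuous map $T^k:A^{(k-1)}\to T^k(A^{(k-1)})$ produces a sub-interval $A^{(k)}_K\subset A^{(k-1)}$ with $T^k(A^{(k)}_K)=K$; these are the children of $A^{(k-1)}$ in the tree.

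Any two distinct leaves differ first at some level $k\in\{1,\dots,n\}$, and any representatives $x,y$ in the respective leaves then satisfy $T^kx\in K$, $T^ky\in K'$ with $K,K'$ at distance $\ge\varepsilon$, so the leaves form an $(n,\varepsilon)$-separated set. Counting them,
\[
\log N(\lvert\cdot\rvert,T,\varepsilon,n)\ge\log M_1+(n-1)\log M_2\ge n\log\tfrac{C}{2}+\bigl(1+(n-1)(1-\beta)\bigr)\log\tfrac{1}{\varepsilon},
\]
and passing to $\limsup_{n\to\infty}$ then $\liminf_{\varepsilon\to 0}$ in the definition of $\mdim$ yields the bound $\mdim(T,\lvert\cdot\rvert)\ge 1-\beta$. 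The main obstacle is conceptual rather than computational: since $T$ is not assumed monotone, pre-images of intervals under $T^k$ need not themselves be intervals, so the recursion requires the inverse branch lemma to ensure a genuinely connected child at each step.
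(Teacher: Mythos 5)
Your proof is correct, and it arrives at the same bound by a closely related but differently organized argument. The paper works globally: it fixes once and for all a partition of $[0,1]$ into roughly $1/(2\varepsilon)$ intervals $I_j$ of length about $2\varepsilon$ with central $\varepsilon$-subintervals $I'_j$, builds a fixed directed graph (a subshift of finite type) with an edge $j\to\ell$ whenever $T(I'_j)\supset I_\ell$, observes that hypersensitivity gives out-degree at least of order $\varepsilon^{\beta-1}$ at every vertex, and uses the intermediate value theorem to shadow each admissible path by a genuine orbit; counting paths is then a product of out-degrees. You instead build an \emph{adaptive} tree of nested pre-images, choosing at each level $\varepsilon$-long, $\varepsilon$-spaced targets inside the current iterate image and pulling them back via the inverse-branch lemma; the leaves are directly a separated set. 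The underlying analytic input (IVT/inverse branches, hypersensitivity on $\varepsilon$-scale intervals) and the resulting count $\sim(\varepsilon^{\beta-1})^n$ are the same in both; your version is slightly more self-contained in that it never needs a fixed Markov-type structure, while the paper's version packages the counting more cleanly as paths in a fixed graph. One cosmetic point: since $d_n$ uses iterates $T^0,\dots,T^{n-1}$, your tree of depth $n$ produces an $(n+1,\varepsilon)$-separated set rather than an $(n,\varepsilon)$-separated one (separation occurs at level $k\in\{1,\dots,n\}$); re-indexing to $n-1$ branching levels fixes it without affecting the limit.
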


\begin{proof}
By assumption there is some $C>0$ such that $\lvert T(A) \rvert \ge C\lvert A\rvert^\beta$ for all interval $A\subset [0,1]$. Let $\varepsilon\in(0,\frac14)$ and consider any partition $[0,1]=I_1\cup I_2 \dots I_k$ into $k=\lfloor\frac1{2\varepsilon}\rfloor$ intervals, each of length at least $2\varepsilon$ and at most $2\varepsilon+8\varepsilon^2$. Let $I'_j$ be the centered subinterval of $I_j$ of length $\varepsilon$. For each $j\in\{1,\dots, k\}$ it holds $\lvert T(I_j)\rvert \ge C\varepsilon^{\beta}$, and thus $T(I_j)$ contains at least $C'\varepsilon^{\beta-1}$ intervals $I_\ell$ of the partition for some $C'$ not depending on $\varepsilon$.

Construct a directed graph $G$ whose set of vertices is $\{1,2,\dots,k\}$, and where there is an edge from $j$ to $\ell$ whenever $T(I'_j)\supset I_\ell$. By the intermediate value theorem, for any directed path $j_0 \to j_1 \to\dots\to j_{n-1}$ in $G$ there is some starting point $x\in\Omega$ such that $T^i(x)\in I'_{j_i}$ for all $i\in\llbracket 0,n-1\rrbracket$. By construction, any two different paths corresponds to $(n,\varepsilon)$ separated starting points. Since $G$ has minimal out-degree at least $C'\varepsilon^{\beta-1}$, for any $\beta'>\beta$ and any small enough $\varepsilon$ the graph $G$ has at least $\varepsilon^{n(\beta'-1)}$ paths of length $n$. It follows that $\mdim(T) \ge 1-\beta'$, and the desired conclusion follows from letting $\beta'$ approach $\beta$.
\end{proof}

\subsection{Mean dimension for measured dynamical systems}

In this section, we develop an analogue of the Kolmogorov-Sinai entropy for mean dimension. Endow the (metric, compact) phase space $\Omega$ with its Borel $\sigma$-algebra, let $T:\Omega\to \Omega$ be a measurable map and assume $\mu$ is a $T$-invariant probability measure on $\Omega$.

\subsubsection{Kolmogorov-Sinai mean dimension}

Let $\xi=(B_1,\dots,B_k)$ be a (measurable) partition of $\Omega$, and recall that one defines
\[H(\mu;\xi) := -\sum_{i=1}^k \mu(B_i) \log \mu(B_i).\]
The entropy of $(T,\mu)$ relative to the partition $\xi$ is 
\[h(T,\mu,\xi) = \lim_{n\to\infty} \frac{H\big(\mu; \bigwedge_{j=0}^{n-1} T^{-j}(\xi) \big)}{n}\]
and the Kolmogorov-Sinai entropy of $(T,\mu)$ is then defined as
$h(T,\mu) = \sup_{\xi} h(T,\mu,\xi)$
where $\xi$ runs over all partitions into finitely many subsets.

In a spirit similar to the metric mean dimension, let us modify this definition to make it meaningful in the case of infinite entropy, by looking how the asymptotic complexity of the dynamically refined partitions $\bigwedge_{j=0}^{n-1} T^{-j}(\xi)$ increase with the number of elements in $\xi$. Let $\ptt{k}$ be the set of measurable partitions into at most $k$ subsets. We define the \emph{Kolmogorov-Sinai mean dimension} of $(T,\mu)$ by
\[\mdKS(T,\mu) := \liminf_{k\to \infty} \sup_{\xi \in\ptt{k}} \frac{h(T,\mu,\xi)}{\log k}  \]
Observe that if $\xi\in\ptt{k}$, then $\bigwedge_{j=0}^{n-1} T^{-j}(\xi)$ has at most $k^n$ elements, so that 
\[H\big(\mu; \bigwedge_{j=0}^{n-1} T^{-j}(\xi) \big) \le n\log k.\]
The Kolmogorov-Sinai mean dimension is thus bounded above by $1$, and we shall see that the relation with the metric mean dimension will involve some dimension of $\Omega$. The Kolmogorov-Sinai mean dimension is more a proportion (of the maximal possible complexity) than a dimension, but we still chose this name in order to draw a parallel between entropies and mean dimensions.

If $\lambda$ is the Lebesgue measure on $[0,1]$, then the shift on $[0,1]^{\mathbb{N}}$ preserves $\lambda^{\otimes \mathbb{N}}$ and it is easy to see that in this case the Kolmogorov-Sinai mean dimension is $1$. Any measurable dynamical system with finite Kolmogorov-Sinai entropy has vanishing Kolmogorov-Sinai dimension. We will not compute any Kolmogorov-sinai mean dimension here, but will prove lower bounds. It is an intriguing problem to find explicit examples $(T,\mu)$ where one can compute $\mdKS(T,\mu)$ and with $0<\mdKS(T,\mu)<1$.

\subsubsection{Half a variational principle}

In this section, we shall prove that on finite-dimensional phase spaces, the metric mean dimension can be used to build invariant measure of positive Kolmogorov-sinai mean dimension (and Corollary \ref{c:KS} will follow immediately).

We shall use  the \emph{upper Minkowski dimension} for metric spaces:
\[\udimM(\Omega,d) = \limsup_{\varepsilon\to 0} \frac{\log N(\Omega,d,\varepsilon)}{\log\frac1\varepsilon}\]
where $N(\Omega,d,\varepsilon)$ is the maximal cardinal of an $\varepsilon$-separated subset of $\Omega$.

\begin{theo}\label{t:variational}
For all continuous map $T:\Omega\to\Omega$ and all metric $d$ on $\Omega$ such that $\udimM(\Omega,d)\in(0,\infty)$,
\[\sup_{\mu\in\prob_T(\Omega)}\mdKS(T,\mu) \ge \frac{\mdim(T,d)}{\udimM(\Omega,d)} \]
\end{theo}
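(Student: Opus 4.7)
My plan is to follow the strategy of Misiurewicz's proof of the classical variational principle for topological entropy, adapting it to the mean-dimension setting. Set $\delta = \mdim(T,d)$ and $D = \udimM(\Omega,d)$; to prove $\sup_\mu \mdKS(T,\mu) \ge \delta/D$, it suffices to exhibit, for every $\delta' < \delta$ and $D' > D$, an invariant measure $\mu$ with $\mdKS(T,\mu) \ge \delta'/D'$, and then let $\delta' \nearrow \delta$ and $D' \searrow D$.

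For each small $\varepsilon > 0$ I would build two pieces of data adapted to scale $\varepsilon$. Using $\udimM(\Omega,d) < D'$, a covering of $\Omega$ by at most $\varepsilon^{-D'}$ balls of radius $\varepsilon/6$ is available; refining it into a disjoint measurable partition yields $\xi_\varepsilon$ with $k_\varepsilon := \lvert \xi_\varepsilon \rvert \le \varepsilon^{-D'}$ and $\operatorname{mesh}(\xi_\varepsilon) \le \varepsilon/3$. Using $\mdim(T,d) > \delta'$, choose $n_\varepsilon \to \infty$ along which a maximal $(n_\varepsilon, \varepsilon)$-separated set $S_\varepsilon$ has cardinality at least $\varepsilon^{-\delta' n_\varepsilon}$; form the empirical measure $\tau_\varepsilon = \lvert S_\varepsilon \rvert^{-1} \sum_{x \in S_\varepsilon} \delta_x$ and its time average $\sigma_\varepsilon := n_\varepsilon^{-1} \sum_{k=0}^{n_\varepsilon - 1} T^k_* \tau_\varepsilon$. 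Extracting a weak-$*$ subsequential limit $\mu$ of $\sigma_{\varepsilon_j}$ along a sparse sequence $\varepsilon_j \downarrow 0$ chosen so that $\log k_{\varepsilon_{j+1}}/\log k_{\varepsilon_j} \to 1$, a standard argument gives $T$-invariance of $\mu$.

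The key entropy estimate is that cells of $\xi_\varepsilon^{n_\varepsilon} := \bigvee_{k=0}^{n_\varepsilon-1} T^{-k}\xi_\varepsilon$ have $d_{n_\varepsilon}$-diameter at most $\operatorname{mesh}(\xi_\varepsilon) \le \varepsilon/3 < \varepsilon$ and so contain at most one point of $S_\varepsilon$; therefore $H_{\tau_\varepsilon}(\xi_\varepsilon^{n_\varepsilon}) = \log \lvert S_\varepsilon \rvert \ge \delta' n_\varepsilon \log(1/\varepsilon)$. Misiurewicz's lemma, applied after a negligible perturbation to ensure $\mu$-null boundaries of $\xi_{\varepsilon_j}$, would then yield $h(T, \mu, \xi_{\varepsilon_j}) \ge \delta' \log(1/\varepsilon_j)$ at each scale along the sequence. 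Since $\log k_{\varepsilon_j} \le D' \log(1/\varepsilon_j)$, this gives $h(T, \mu, \xi_{\varepsilon_j})/\log k_{\varepsilon_j} \ge \delta'/D'$, and the monotonicity of $k \mapsto \sup_{\xi \in \ptt{k}} h(T, \mu, \xi)$ combined with the slow growth of $\log k_{\varepsilon_j}$ extends this lower bound to the $\liminf$ in the definition of $\mdKS$, yielding $\mdKS(T, \mu) \ge \delta'/D'$ as required.

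The main obstacle is ensuring that a \emph{single} weak limit $\mu$ simultaneously witnesses the entropy bound at every scale $\varepsilon_j$. Misiurewicz's lemma is classically stated for a fixed partition and a single scale, and upper semicontinuity of $\nu \mapsto h(T, \nu, \xi)$ goes in the unfavorable direction for preserving lower bounds under weak limits. Making the diagonal argument work likely requires carefully coordinating the double sequence $(\varepsilon_j, n_{\varepsilon_j})$, perhaps by taking the $n \to \infty$ limit first for each fixed $\varepsilon$ to produce intermediate invariant measures $\nu_\varepsilon$ and then arranging that the limit of $\nu_{\varepsilon_j}$ as $\varepsilon_j \downarrow 0$ retains each scale-$\varepsilon_j$ estimate; inductively refining the partitions so that $\xi_{\varepsilon_{j+1}}$ refines $\xi_{\varepsilon_j}$ may help, since then $h(T, \mu, \xi_{\varepsilon_{j+1}}) \ge h(T, \mu, \xi_{\varepsilon_j})$ propagates entropy from coarse to fine scales. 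This coordination is the technical heart of the proof.
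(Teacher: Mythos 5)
Your strategy tracks the paper's quite closely: both you and the paper run Misiurewicz's pressure/entropy argument at a scale $\varepsilon(k)$ calibrated so that the Minkowski covering number at that scale is $\approx k$, build a limiting invariant measure from empirical measures on $(n,\varepsilon)$-separated sets, and compare the resulting Kolmogorov--Sinai entropy estimate to $\log k$. The one real difference in setup is minor: the paper fixes $\varepsilon=\varepsilon(k)$, lets $n\to\infty$ to extract a cluster point $\mu$, and only then considers varying $k$; you instead pick a single $n_\varepsilon$ per $\varepsilon$ and take a single weak-$*$ limit along $\varepsilon_j\downarrow 0$. That shortcut is delicate, because Misiurewicz's lemma (as in Katok--Hasselblatt, Lemmas 4.5.1--4.5.2) is formulated for a fixed partition $\xi$ and the full sequence $n\to\infty$; the way you invoke it ``at each scale $\varepsilon_j$'' for the same limit $\mu$ would need justification. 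You yourself suggest the better-behaved route of first passing to intermediate invariant measures $\nu_\varepsilon$ and then taking $\varepsilon\to 0$, which is closer to what the paper does.

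Your ``main obstacle'' paragraph identifies a genuine subtlety, and it is worth being candid: the paper's written proof does not obviously resolve it either. In the paper, the cluster point $\mu$ is constructed at a fixed $\varepsilon=\varepsilon(k)$, so it depends on $k$; the displayed chain then passes from
\[
\sup_{\xi\in\ptt{k}}\frac{h_\mu(T,\xi)}{\log k}\;\ge\;\limsup_n \frac{\log N(d,T,\varepsilon(k),n)}{n(D+\eta)\log\tfrac1{\varepsilon(k)}}
\]
directly to
\[
\mdKS(T,\mu)\;\ge\;\liminf_{k\to\infty}\limsup_n \frac{\log N(d,T,\varepsilon(k),n)}{n(D+\eta)\log\tfrac1{\varepsilon(k)}},
\]
which is not justified for a $k$-dependent $\mu$, since $\mdKS(T,\mu)$ is itself a $\liminf$ over $k$ and the entropy bound at $\ptt{k}$ degrades by a factor $\log k/\log k'$ when pushed to $\ptt{k'}$ with $k'\gg k$. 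So your concern is not an artifact of your own presentation; it is the technical heart of the statement.

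Your sketch of how to close this gap is sound in outline. The observation that it suffices to establish $h(T,\mu,\xi_{\varepsilon_j})\ge \delta'\log(1/\varepsilon_j)$ along a sequence with $\log k_{\varepsilon_{j+1}}/\log k_{\varepsilon_j}\to 1$ is exactly the right bookkeeping: monotonicity of $k\mapsto\sup_{\xi\in\ptt{k}}h(T,\mu,\xi)$ then lets you interpolate between the $k_{\varepsilon_j}$ and control the $\liminf$ over all $k$. What remains, as you say, is to produce a \emph{single} $\mu$ for which the scale-$\varepsilon_j$ entropy bound holds simultaneously for all $j$, for instance by a diagonal argument over the intermediate measures $\nu_{\varepsilon_j}$ together with nested partitions $\xi_{\varepsilon_{j+1}}\succ\xi_{\varepsilon_j}$, using that entropy is upper semicontinuous under weak-$*$ limits only in restricted situations and therefore needs replacing by a finite-time (Katok--Hasselblatt 4.5.2-style) bound that is preserved in the limit. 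This is a plan, not yet a proof; but it is the right plan, and it shows you understood both the construction and where it is fragile.
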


\begin{proof}
We follow a standard construction \cite{Misiurewicz1977short,Katok-Hasselblatt1995introduction}. Set $D=\udimM(\Omega,d)$ and fix any $\eta>0$. For each $k\in\mathbb{N}$, set $\varepsilon(k)=(\frac1k)^{\frac{1}{D+\eta}}$. By definition of the upper Minkowski dimension, whenever $k$ is large enough, $N(\Omega,d,\varepsilon(k)/2)\le k$ (the factor $1/2$ is harmless and will be used later on). Consider from now on a fixed, large enough $k\in\mathbb{N}$ and let $\varepsilon=\varepsilon(k)$.

Let $n\in\mathbb{N}$, consider $E_n$ a maximal $(n,\varepsilon)$-separated set with respect to $T$, and define
\[ \nu_n = \frac{1}{\lvert E_n\rvert} \sum_{x\in E_n} \delta_n, \qquad 
  \mu_n = \frac{1}{n} \sum_{i=0}^{n-1} T^i_* \nu_n \]
Let $\mu$ be a cluster point of the sequence $(\mu_n)$; since $T$ is continuous, $\mu$ is $T$-invariant. Let $\{x_1,\dots x_{k'}\}$ be a maximal $\varepsilon/2$-separated set; by maximality it is an $\frac\varepsilon2$ covering and
we can then construct as in Lemmas 4.5.1 and 4.5.2 of \cite{Katok-Hasselblatt1995introduction} a partition $\xi$ of $\Omega$ into $k'$ elements such that $\limsup_n \frac{\log \lvert E_n\rvert}{n} \le h_\mu(T,\xi)$. We get
\begin{align*}
\sup_{\xi\in\mathcal{P}_k}\frac{h_\mu(T,\xi)}{\log k}  &\ge \limsup_{n\to\infty} \frac{\log N(d,T,\varepsilon(k),n)}{n(D+\eta)\log\frac1{\varepsilon(k)}} \\
\mdKS(T,\mu) &\ge \liminf_{k\to\infty} \limsup_{n\to\infty} \frac{\log N(d,T,\varepsilon(k),n)}{n(D+\eta)\log\frac1{\varepsilon(k)}}\\
  &\ge \liminf_{\varepsilon\to0}\limsup_{n\to\infty} \frac{\log N(d,T,\varepsilon,n)}{n(D+\eta)\log\frac1{\varepsilon}}\\
    &\ge \frac{\mdim(T,d)}{D+\eta}\\
\sup_{\mu\in\prob_T(\Omega)}\mdKS(T,\mu) &\ge \frac{\mdim(T,d)}{D}.
\end{align*}
\end{proof}

We cannot hope for equality in Theorem \ref{t:variational} since in Section \ref{s:vanishing} we will construct for each continuous $T:[0,1]\to[0,1]$ a metric $d$ on the interval such that $\udimM([0,1],d)=1$ and $\mdim(T,d)=0$. However, the possibility of a full variational principle is briefly discussed in the next section.

\subsubsection{Other topological invariants}

Let us consider other topological invariants that could be used to distinguish highly irregular dynamical systems on finite-dimensional spaces, circumventing the vanishing of absolute mean dimension for interval maps.

First, recall that whenever $d$ is a metric on the phase space $\Omega$ and $\alpha\in(0,1)$, $d^\alpha$ also is a metric (this operation is sometimes called \emph{snowflaking}), with $\udimM(\Omega,d^\alpha) = \alpha^{-1}\udimM(\Omega,d)$ and $\mdim(T,d^\alpha)=\alpha^{-1}\mdim(T,d)$. 
A first possibility would thus be to consider
\[\sup_d \frac{\mdim(T,d)}{\udimM(\Omega,d)}\]
with the small inconvenient that, since $N(d,T,\varepsilon,n)\le N(\Omega,d,\varepsilon)^n$, we get a quantity that is bounded above by $1$ whenever defined, as in the Kolmogorov-Sinai mean dimension. The same default appears with the more natural version where $\udimM(\Omega,d)$ is replaced by the \emph{lower} Minkowski dimension.

A second possibility is to take a supremum over a restricted set of ``minimal'' distances. For example,
\[\sup \big\{\mdim(T,d) \colon \udimM(\Omega,d)=\dim \Omega\big\}\]
where $\dim$ is the topological dimension.
A third possibility is to prevent snowflaking by asking geometric condition on the metrics, for example by considering:
\[\sup \big\{\mdim(T,d) \colon (\omega,d) \text{ is a length space}\big\}.\]
The last two quantities are suitable when $\Omega$ is a manifold, but not when it is a Cantor space (then its topological dimension is $0$ and it has no length metric).

All three of these quantities are topological invariants, and positive for hypersensitive maps of the interval by Theorem \ref{t:mdim-general}; but we will not name them here, as it would take more examples to decide which ones are ultimately relevant; two clues that would help make this decision are:
\begin{itemize}
\item to use one of these invariants to prove that two explicit dynamical systems are not topologically conjugated;
\item to obtain a full variational principle, i.e. an equality between $\sup_\mu \mdKS(T,\mu)$ and one of these quantities (possibly normalized by a metric dimension).
\end{itemize}

These problems are beyond the scope of the current article, and we leave them open to further investigations.

\section{Vanishing of the absolute metric mean dimension for interval maps}\label{s:vanishing}

In this section we prove Theorem \ref{t:vanishing}. Let $T:[0,1]\to[0,1]$ be a continuous map. We will construct the metric $d$ by conjugating the Euclidean metric, i.e. 
\[d(x,y)= \lvert h(x)-h(y)\rvert, \qquad \forall x,y\in[0,1],\]
where $h:[0,1]\to[0,1]$ is a homeomorphism to be constructed. We actually design $h$ with little reference to $T$, only using its modulus of continuity; it will compress large regions of the interval and expand tiny regions, making all specific features of $T$ only appear at very small scales in $h\circ T\circ h^{-1}$.

\subsection{A family of homeomorphism of the interval}

We construct a family of homeomorphisms of $[0,1]$, among which $h$ shall be chosen, in a way similar to the construction of zipper maps. Here we only use increasing changes of coordinates, so as to send homeomorphisms to homeomorphisms, and we use $4$ pieces to intertwine two regions of high slope with two region with small slopes. For each $\alpha\in(0,\frac12)$ (to be thought of as very small), we define a map $\Psi^\alpha:\Ck^0_0 \to \Ck^0_0$ by
\[\Psi^\alpha f(x) = \begin{dcases*}\frac{1-\alpha}{2}  f\big(\frac{2}{\alpha}x\big) & if $x\in[0,\frac\alpha2]$ \\               
      \frac{1-\alpha}{2} +\frac\alpha2 f\big(\frac{2}{1-\alpha}x-\frac{\alpha}{1-\alpha}\big) & if $x\in[\frac\alpha2,\frac12]$ \\
      \frac12+ \frac{1-\alpha}{2}f\big(\frac2\alpha x -\frac1\alpha\big) & if $x\in[\frac12,\frac{1+\alpha}{2}]$ \\
      1-\frac{\alpha}{2}+\frac\alpha2 f\big(\frac{2}{1-\alpha}x-\frac{1+\alpha}{1-\alpha}\big) & if $x\in[\frac{1+\alpha}{2},1]$
              \end{dcases*} \] 
The graph of $\Psi^\alpha(\Id)$ is shown in Figure \ref{f:homeo}. As in the definition of zipper maps, it will be convenient to rephrase this definition through a two-dimensional iterated function system acting on graphs of functions: for all $\alpha\in(0,\frac12)$, all $x,y\in[0,1]^2$ we set
\begin{align*}
H^\alpha_0(x) &= \frac\alpha2 x  & V^\alpha_0(y) &= \frac{1-\alpha}{2}y & P^\alpha_i(x,y) &= (H^\alpha_i(x),V^\alpha_i(y)) \\
H^\alpha_1(x) &=\frac{1-\alpha}{2}x+\frac{\alpha}{2} & V^\alpha_1(y) &=\frac{\alpha}{2}y+\frac{1-\alpha}{2} && \forall i\in\{0,1,2,3\}\\
H^\alpha_2(x) &=\frac\alpha2 x+\frac{1}{2} & V^\alpha_2(y) &=\frac{1-\alpha}{2}y+\frac{1}{2}\\
H^\alpha_3(x) &=\frac{1-\alpha}{2}x+\frac{1+\alpha}{2} & V^\alpha_3(y) &=\frac{\alpha}{2}y+1-\frac{\alpha}{2}
\end{align*}
For any $f\in\Ck_0^0$ of graph $G_f$, $\Psi^\alpha f$ has graph $\cup_i P_i(G_f)$ and sends each interval $I_i := H_i([0,1])$ onto $J_i := V_i([0,1])$.

\begin{figure}[htp]
\centering
\includegraphics[width=.4\linewidth]{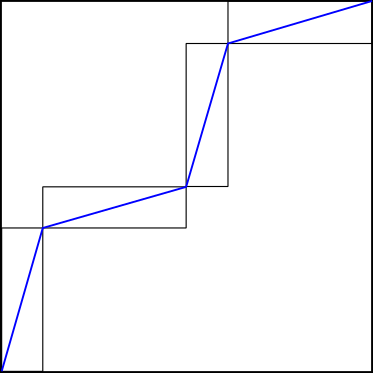}
\caption{The image of the identity map under $\Psi_\alpha$ (in blue). Each box has length $\frac{1-\alpha}{2}$ and width $\frac\alpha2$, so that the slopes of the blue segments are $\frac{1-\alpha}{\alpha}$ and $\frac{\alpha}{1-\alpha}$.}
\label{f:homeo}
\end{figure}

Consider a sequence $s=(s_n)\in (0,\frac12)^{\mathbb{N}}$. For all $n\in\mathbb{N}$, $\Psi^{s_1}\circ\dots\circ\Psi^{s_n}$ is a contraction on $\Ck^0_0$ of ratio bounded by $ \big(\frac{1-\alpha}{2}\big)^n$ and sends increasing homeomorphisms to increasing homeomorphisms. The sequence $\big(h^n=\Psi^{s_1}\circ\dots\circ\Psi^{s_n}(\Id)\big)_n$ is thus a Cauchy sequence and converges to a non-decreasing map $h=h_s\in\Ck^0_0$.

\begin{lemm} \label{lemm:h}
For all $s=(s_n)\in (0,\frac12)^{\mathbb{N}}$, the map $h=h_s$ is a homeomorphism of $[0,1]$ to itself.
\end{lemm}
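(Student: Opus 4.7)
The plan is to establish in turn that $h := h_s$ is continuous, non-decreasing, and strictly increasing; combined with $h(0)=0$ and $h(1)=1$, this will imply that $h$ is a continuous bijection of the compact interval $[0,1]$ to itself, hence a homeomorphism.

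Continuity is immediate since $h$ is a uniform limit of continuous functions. For monotonicity, a direct inspection of the definition of $\Psi^\alpha$ shows that the four slopes of the pieces of $\Psi^\alpha(\mathrm{Id})$ are strictly positive whenever $\alpha\in(0,\tfrac12)$. Since $\Psi^\alpha$ preserves continuity and sends a non-decreasing map to a non-decreasing map, an induction ensures every $h^n$ is non-decreasing, and the uniform limit $h$ inherits this property.

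The main task is strict monotonicity, which I would derive from the self-similar structure. For each finite word $\omega=i_1\cdots i_n\in\{0,1,2,3\}^n$, set
\[
I_\omega = H^{s_1}_{i_1}\circ\cdots\circ H^{s_n}_{i_n}([0,1]), \qquad J_\omega = V^{s_1}_{i_1}\circ\cdots\circ V^{s_n}_{i_n}([0,1]).
\]
Unwinding the definition of $\Psi^\alpha$, one checks by induction on $m\ge n$ that $h^m$ carries the endpoints of $I_\omega$ onto the endpoints of $J_\omega$; this uses only that $\Psi^\beta(f)(0)=0$ and $\Psi^\beta(f)(1)=1$ for every $f\in\mathscr{C}_0^0$ and every $\beta$, so that the $I_\omega$-rescaled tail $\Psi^{s_{n+1}}\circ\cdots\circ\Psi^{s_m}(\mathrm{Id})$ contributes nothing at the boundary. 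Passing to the uniform limit, $h$ sends the endpoints of $I_\omega$ to those of $J_\omega$; since $h$ is continuous and non-decreasing, we obtain $h(I_\omega)=J_\omega$. Three geometric facts then finish the argument: the families $(I_\omega)_{|\omega|=n}$ and $(J_\omega)_{|\omega|=n}$ tile $[0,1]$ for each $n$; the contraction factor of every $H^\alpha_i$ is at most $(1-\alpha)/2<1/2$, so $|I_\omega|<2^{-n}$ for every word of length $n$; and every $V^\alpha_i$ has strictly positive slope, so $|J_\omega|>0$ for every finite $\omega$.

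Given $x<y$ in $[0,1]$, choose $n$ large enough that $2^{-n}<(y-x)/3$. Since the level-$n$ cylinders tile $[0,1]$ with diameters below $(y-x)/3$, at least one $I_{\omega_0}$ lies inside $[x,y]$, and then
\[
h(y)-h(x) \;\ge\; |h(I_{\omega_0})| \;=\; |J_{\omega_0}| \;>\; 0,
\]
proving strict monotonicity. The only delicate step is the identity $h(I_\omega)=J_\omega$; this is essentially a bookkeeping exercise with the definition of $\Psi^\alpha$, relying on the fact that uniform convergence preserves the values taken at the endpoints of each cylinder.
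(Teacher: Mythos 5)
Your proof is correct and follows essentially the same route as the paper's: reduce to strict monotonicity, observe that $h$ sends each cylinder $I_\omega$ onto $J_\omega$, and combine $\lvert I_\omega\rvert \le 2^{-\lvert\omega\rvert}$ with $\lvert J_\omega\rvert > 0$. You flesh out the step $h(I_\omega)=J_\omega$ (endpoint preservation in the uniform limit) a bit more explicitly than the paper, which simply asserts it ``by construction,'' but the argument is the same.
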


We introduce some more notation that will be used here and later on. Letting the dependency on $s$ implicit, for each finite word $\omega=i_1\dots i_\ell\in\{0,1,2,3\}^*$ we consider the intervals
\[I_\omega=H^{s_1}_{i_1} \circ \dots \circ H^{s_\ell}_{i_\ell}([0,1]) \text{ and }
J_\omega=V^{s_1}_{i_1} \circ \dots \circ V^{s_\ell}_{i_\ell}([0,1]).\]

\begin{proof}
We already know that $h$ is in $\Ck_0^0$ and is non-decreasing, we have left to prove it is increasing. By construction, each $h^n$ (and thus $h$) sends $I_\omega$ to $J_\omega$ for each $\omega=i_1\dots i_\ell\in\{0,1,2,3\}^*$.
Observe that $\lvert I_\omega\rvert \le \frac{1}{2^\ell}$ (this is the motivation to subdivide the interval in no less than $4$ parts), thus goes uniformly to zero as $\ell$ goes to infinity. For any $x_1<x_2\in[0,1]$, there exist a finite word $\omega$ such that $I_\omega\subset (x_1,x_2)$; then $J_\omega\subset [h(x_1),h(x_2)]$ and since $\lvert J_\omega\rvert>0$, we obtain $h(x_2)>h(x_1)$.
\end{proof}

\def\I{\mathcal{I}}
\def\L{\mathcal{L}}
\def\ve{\varepsilon}
\def\es{\varnothing}

\subsection{Choosing the parameters $s$}

We consider $s=(s_n)_{n\ge 1}$ a \emph{decreasing} sequence and the corresponding homeomorphism $h$; we will add assumptions on $s$ at several points below, but we prefer to introduce them only when needed. We denote by $d$ the corresponding metric on $[0,1]$, given by $d(x,y)=\lvert h(x)-h(y)\rvert$.

We consider the rectangles $R_\omega=I_\omega\times J_\omega$. The \emph{depth} of $I_\omega$, $J_\omega$ and $R_\omega$ is simply the length $\lvert\omega\rvert$ of their defining word. For any positive integer $p$, the rectangles of depth $p$ cover the graph of $h$ and form a \emph{chain} $R_{0\dots 00}, R_{0\dots01}, \dots R_{3\dots 32}, R_{3\dots 33}$, with successive rectangles touching only at vertices and non-sucessive rectangles disjoint.

A rectangle of depth $p$ have area $4^{-p} \prod_{j=1}^p s_j(1-s_j)$, independent of the specifics of the word $\omega=i_1\dots i_p$. Let $j(\omega)\subset\llbracket1,p \rrbracket$ be the set of indices $j$ such that $i_j$ is even; then
\begin{align*}
\lvert I_\omega \rvert  &= 2^{-p} \prod_{j\in j(\omega)} s_j \prod_{\substack{j\notin j(\omega)\\1\le j\le p}} (1-s_j) \, , \\
\lvert J_\omega \rvert  &= 2^{-p} \prod_{j\in j(\omega)} (1-s_j) \prod_{\substack{j\notin j(\omega)\\1\le j\le p}} s_j \, .
\end{align*}
We will assume that $(s_n)$ decreases very fast, more precisely
\begin{align}
\prod_{i\ge 1} (1-s_i) &> 1-2^{-10} \label{e:s1} \, ,\\
\forall n\ge 1,\qquad\qquad s_n &< 2^{-n-10}\prod_{j=1}^{n-1} s_j =: P_n \, .
\label{e:s2}
\end{align}
In particular, the aspect ratio of any rectangle $R_\omega$ of depth $p$ is mostly determined by the last letter of $\omega$ alone; when that letter is even,
\[
\lvert I_\omega\rvert <2^{-p} s_p <2^{-2p-10}\prod_{j=1}^{p-1} s_j \le 2^{-2p-9} (1-s_p) \prod_{j=1}^{p-1} s_j \le 2^{-p-9} \lvert J_\omega\rvert \, ,
\]
and $R_\omega$ is called a \emph{vertical} rectangle; similarly when that letter is odd, 
\[\lvert I_\omega\rvert > 2^{-p} \Big(\prod_{j=1}^{p-1} s_j\Big) \frac12  > 2^9 s_p > 2^{p+9} \lvert J_\omega\rvert\]
and $R_\omega$ is called a \emph{horizontal} rectangle. For all $p$, the chain of rectangles of depth $p$ alternates between vertical and horizontal rectangles.

Let $\varepsilon\in(0,1)$; for rounding issues, we set $\varepsilon'=1/(\lfloor1/\varepsilon\rfloor+1)$ so that $\varepsilon'$ is the inverse of an integer with $\frac\varepsilon2 \le \varepsilon' < \varepsilon$. We consider the following cover of $[0,1]$ by sets of $d$-diameter bounded by $\varepsilon$:
\[\mathcal{I}_\varepsilon := \big\{ h^{-1}([k\varepsilon',(k+1)\varepsilon'])\colon k\in\llbracket 0,1/\varepsilon'-1 \rrbracket \big\} \,.\]
We group them by size, denoting for all $k\ge 1$:
\[\mathcal{I}_\varepsilon(k) := \big\{ I\in\mathcal{I}_\varepsilon \,\big|\, s_k\le \lvert I\rvert <s_{k-1} \big\} \quad\text{and}\quad \mathcal{L}_\varepsilon(k) = \bigcup_{j=1}^k \mathcal{I}_\varepsilon(k) \,,\]
where by convention $s_0=1$ and $\mathcal{L}$ stands for ``large'': $\mathcal{L}_\varepsilon(k)$ is the set of intervals in $\mathcal{I}_\varepsilon$ whose length is at least $s_k$.

Finally, $p_0=p_0(\varepsilon)$ denotes the integer such that $\varepsilon'\in [s_{p_0},s_{p_0-1})$.

\begin{lemm}\label{l:horizontal}
Let $m\in\mathbb{N}$, consider an interval $I\subset [0,1]$ and set $J=h(I)$.
\begin{enumerate}
\item If $\lvert I\rvert \ge s_m$, then there exist a horizontal rectangle $R_\omega$ of depth $m$ such that $\lvert I\cap I_\omega\rvert >0$, and also $\lvert J\cap J_\omega\rvert >0$,
\item if $\lvert I\rvert < s_m$, then there exist at most $3$ rectangles $R_\omega$ of depth $m$ such that $I\cap I_\omega\neq \varnothing$.
\item if $\lvert J\rvert \ge s_m$, then there exist a vertical rectangle $R_\omega$ of depth $m$ such that $\lvert J\cap J_\omega\rvert >0$, and also $\lvert I\cap I_\omega\rvert >0$.
\end{enumerate}
\end{lemm}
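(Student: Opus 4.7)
The three parts rest on two ingredients already at our disposal. First, the chain of depth-$m$ intervals $(I_\omega)_{|\omega|=m}$ tiles $[0,1]$ in a pattern strictly alternating between \emph{vertical} rectangles (last letter even) and \emph{horizontal} rectangles (last letter odd); the same alternation holds for the chain $(J_\omega)_{|\omega|=m}$, which also tiles $[0,1]$ since $h$ is an increasing homeomorphism. Second, the size bounds derived from condition \eqref{e:s2} give, at depth $m$: vertical $I_\omega$ satisfy $|I_\omega| < 2^{-m} s_m < s_m$ while horizontal $I_\omega$ satisfy $|I_\omega| > 2^9 s_m$, with the roles of horizontal and vertical swapped for $|J_\omega|$.

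For (1), I would argue by contradiction: assume that no horizontal depth-$m$ interval $I_\omega$ meets $I$ in positive measure. Two non-adjacent vertical intervals of the chain are separated by at least one horizontal interval, and if $I$ were to meet two distinct vertical $I_\omega$ in positive measure it would fully contain the intervening horizontal one — contradicting the assumption. Hence $I$ meets at most a single vertical $I_\omega$ in positive measure, so $|I|$ is bounded by the length of that one vertical depth-$m$ interval, giving $|I| < s_m$ and contradicting $|I| \ge s_m$. With a horizontal $I_\omega$ such that $|I \cap I_\omega|>0$ in hand, the conclusion $|J\cap J_\omega|>0$ is immediate: $I\cap I_\omega$ is an interval of positive length and the strictly increasing homeomorphism $h$ sends it to $J \cap J_\omega$, again of positive length. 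Part (3) is then strictly symmetric: apply the very same argument to the tiling $(J_\omega)_{|\omega|=m}$, the ``small'' role now being played by horizontal rectangles since $|J_\omega| < s_m$ whenever $R_\omega$ is horizontal.

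For (2), I would again argue by contradiction: suppose four consecutive depth-$m$ intervals $A_1 < A_2 < A_3 < A_4$ all meet $I$. Because $I$ is an interval containing a point in $A_1$ and a point in $A_4$, it must contain $A_2 \cup A_3$ entirely, whence $|I| \ge |A_2| + |A_3|$. The alternating pattern forces at least one of $A_2, A_3$ to be horizontal, hence of length exceeding $2^9 s_m > s_m$, which contradicts $|I| < s_m$.

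The only mildly delicate point is to separate mere non-empty intersection (what Part 2 controls) from positive-measure intersection (what Parts 1 and 3 conclude); for two intervals the latter is equivalent to their intersection having non-empty interior, so the translation is routine. Beyond that, the proof is pure bookkeeping over the alternating chain together with the size inequalities built into \eqref{e:s1}–\eqref{e:s2}, and I do not anticipate any real obstacle.
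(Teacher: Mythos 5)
Your argument is correct and relies on exactly the same two facts as the paper's proof: the strict alternation of vertical and horizontal rectangles in the depth-$m$ chain, and the size separation $|I_\omega|<s_m$ for vertical versus $|I_\omega|>2^9 s_m$ for horizontal coming from \eqref{e:s2}. The only cosmetic differences are that your part (1) is phrased as a contradiction where the paper reasons directly that $I$ cannot fit inside a single vertical $I_\omega$, and in part (2) you implicitly use (and could state) that because $I$ is an interval and the $I_\omega$ tile $[0,1]$, the rectangles meeting $I$ automatically form a consecutive block of the chain.
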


\begin{proof}
Assume $\lvert I\rvert \ge s_m$. Then there are no vertical rectangle $R_\omega$ of depth $m$ such that $I\subset I_\omega$, since they have width $\lvert I_\omega\rvert < s_m$. Since the chain of depth $m$ rectangle alternates between horizontal and vertical ones, there must exist a horizontal rectangle $R_\omega$ of depth $m$ such that $\lvert I\cap I_\omega\rvert >0$. Since $J=h(I)$ and $J_\omega=h(I_\omega)$, we also have $\lvert J\cap J_\omega\rvert >0$.

Assume $\lvert I\rvert < s_m$, then horizontal rectangles $R_\omega$ of depth $m$ have width $\lvert I_\omega\rvert > 2^9 s_m > \lvert I\rvert$, so that among rectangles of depth $m$, at most two successive horizontal rectangle and the vertical rectangle between them can satisfy $I\cap I_\omega\neq\varnothing$.

The last item is proved as the first one, inverting coordinate axes.
\end{proof}

\begin{lemm} \label{lemm:cardL}
For all $k\ge 1$ and all $\varepsilon\in(0,1)$, 
\[\card(\mathcal{L}_\varepsilon(k)) \le 4^{\max(k,p_0)}\]
\end{lemm}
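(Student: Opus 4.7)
The plan is to show that for $M = \max(k,p_0)$, no element of $\mathcal{L}_\varepsilon(k)$ can be contained in a single interval $I_\omega$ of the depth-$M$ tiling of $[0,1]$; this forces each such $I$ to straddle an interior boundary point of that tiling, and an injection to the set of $4^M - 1$ interior boundary points then yields $\card\mathcal{L}_\varepsilon(k) \le 4^M - 1 \le 4^{\max(k,p_0)}$.

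The containment-ruling-out step proceeds by contradiction. Suppose $I \in \mathcal{L}_\varepsilon(k)$ and $I \subset I_\omega$ for some $\omega$ of length $M$. This forces simultaneously $\lvert I\rvert \le \lvert I_\omega\rvert$ on the $I$-axis and $\varepsilon' = \lvert h(I)\rvert \le \lvert J_\omega\rvert$ on the $J$-axis. If the last letter of $\omega$ is even (so that $R_\omega$ is vertical), the product formula---with all $M-1$ non-terminal factors at most $1/2$ and the terminal factor equal to $s_M/2$---gives $\lvert I_\omega\rvert \le 2^{-M} s_M \le 2^{-M} s_k < s_k \le \lvert I\rvert$, using $M \ge k$ and the monotonicity of $(s_n)$; this contradicts $I \subset I_\omega$. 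If the last letter of $\omega$ is odd (horizontal case), the symmetric computation gives $\lvert J_\omega\rvert \le 2^{-M} s_M \le 2^{-M} s_{p_0} < s_{p_0} \le \varepsilon'$, using $M \ge p_0$; this again contradicts $h(I) \subset J_\omega$.

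Once containment in a single depth-$M$ interval is ruled out, the counting is elementary. Because $(I_\omega)_{\lvert\omega\rvert=M}$ tiles $[0,1]$ in a linearly ordered chain with $4^M - 1$ interior boundary points, the connectedness of each $I \in \mathcal{L}_\varepsilon(k)$ forces its interior to contain at least one such point. The assignment sending $I$ to the leftmost depth-$M$ boundary point inside $\interior I$ is then well-defined and injective, because pairwise disjointness of interiors among elements of $\mathcal{I}_\varepsilon$ ensures that any given point lies in the interior of at most one of them.

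The main observation driving the argument is that the constraints $\lvert I\rvert \ge s_k$ and $\lvert h(I)\rvert = \varepsilon'$ are in direct tension at depth $M$: the former requires the containing rectangle to be wide in the $I$-direction (ruling out vertical depth-$M$ rectangles once $M \ge k$), while the latter requires it to be tall in the $J$-direction (ruling out horizontal depth-$M$ ones once $M \ge p_0$); the choice $M = \max(k,p_0)$ is exactly what makes both obstructions apply simultaneously. I anticipate no serious difficulty in carrying out the details, since the required product estimates for $\lvert I_\omega\rvert$ and $\lvert J_\omega\rvert$ are essentially those already recorded in the preceding paragraphs on the sizes of the depth-$p$ rectangles.
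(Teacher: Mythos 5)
Your proof is correct, and it takes a route that differs from the paper's in its bookkeeping while exploiting the same core tension. The paper reuses Lemma~\ref{l:horizontal}: given $I\in\mathcal{L}_\varepsilon(k)$ with $J=h(I)$, both $\lvert I\rvert\ge s_m$ and $\lvert J\rvert\ge s_m$ hold at $m=\max(k,p_0)$, so there is a \emph{horizontal} depth-$m$ rectangle $R_\omega$ with $\lvert J\cap J_\omega\rvert>0$; since such a $J_\omega$ is shorter than every $J$, it can overlap at most two of them, and counting the $\frac12 4^m$ horizontal rectangles gives $4^m$. You instead argue on the $x$-axis: show directly from the product formulas that no $I\in\mathcal{L}_\varepsilon(k)$ can fit inside a single depth-$M$ tile $I_\omega$ (if $R_\omega$ is vertical then $\lvert I_\omega\rvert<s_k\le\lvert I\rvert$, if horizontal then $\lvert J_\omega\rvert<s_{p_0}\le\varepsilon'=\lvert h(I)\rvert$), conclude that each $I$ has a tiling boundary point in its interior, and inject into the $4^M-1$ interior boundary points using that elements of $\mathcal{I}_\varepsilon$ have pairwise disjoint interiors. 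Your version is self-contained (no appeal to Lemma~\ref{l:horizontal}) and arguably more transparent; the paper's is shorter given the machinery already in place. The mechanism is identical in both: the parity of the last letter of $\omega$ tells you which axis provides the obstruction, and $M=\max(k,p_0)$ is chosen precisely so that both $s_k$ (width) and $s_{p_0}\le\varepsilon'$ (height) dominate $s_M$. Both yield the bound $4^M$.

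One very small inaccuracy in your write-up, harmless to the argument: you describe the $M-1$ non-terminal factors as ``at most $1/2$.'' Strictly, the factors are $s_j/2$ or $(1-s_j)/2$, both indeed $<1/2$ since $s_j\in(0,\tfrac12)$, so the claim is fine once one sees you are folding the $2^{-M}$ into the product; stated as factors of $\prod s_j(1-s_j)$ alone, $(1-s_j)$ need not be $\le 1/2$. Worth phrasing carefully, but the bound $\lvert I_\omega\rvert\le 2^{-M}s_M$ (vertical case) and $\lvert J_\omega\rvert\le 2^{-M}s_M$ (horizontal case) is correct either way.
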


\begin{proof}
Write $m=\max(k,p_0)$. Let $I\in\mathcal{L}_\varepsilon(k)$ and set $J=h(I)$; then $\lvert I\rvert\ge s_k\ge s_m$ and $\lvert J\rvert=\varepsilon'\ge s_{p_0}\ge s_m$. Applying Lemma \ref{l:horizontal}, there exist a horizontal rectangle $R_\omega$ of depth $m$ such that $\lvert J_\omega\cap J\rvert >0$. Such a rectangle has height $\lvert J_\omega\rvert<s_m\le \lvert J\rvert$, and $J_\omega$ can therefore meet at most two of the possible $J$ when $I$ runs over $\mathcal{L}_\varepsilon(k)$, since these $J$ have disjoint interior. We obtain the desired conclusion by counting that there are $\frac12 4^m$ horizontal rectangle of depth $m$.
\end{proof}

Now we add the last assumption to $s$, which will depend on the modulus of continuity of the considered map $T:[0,1]\to[0,1]$.
We take $s_1$ small enough to ensure that \eqref{e:s2} implies \eqref{e:s1}, and additionally, inductively on $k\in\mathbb{N}$ we take $s_k$ small enough to ensure both \eqref{e:s2} and
\begin{equation}
\sup\big\{\lvert T^j(I)\rvert \colon 0\le j\le k, I\subset [0,1] \text{ an interval with }\lvert I\rvert \le s_k\big\} <P_k.\label{e:s3}
\end{equation}

\subsection{Relating intervals of the cover and rectangles}

Given a collection $\mathcal{I}$ of intervals and an interval $I_0$, we set
\[I_0\pitchfork \mathcal{I} = \{I\in\mathcal{I} \mid I\cap I_0\neq\varnothing\} \,.\]

\begin{lemm} \label{lemm:orbitBound}
Let $k\ge p_0$. For any interval $I_0\subset[0,1]$ such that $\lvert I_0\rvert<s_{k-2}$ and $q \geq 0$,
\[ \card\big(I_0\pitchfork \mathcal{L}_\varepsilon(k+q)\big)\le 4^{q+3} \,. \]
\end{lemm}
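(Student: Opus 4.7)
The plan is to partition $I_0\pitchfork\mathcal{L}_\varepsilon(k+q)$ into two classes: \emph{class A}, comprising intervals $I$ with $I\subset I_0$, and \emph{class B}, comprising those $I$ that meet $I_0$ but are not contained in it (and which therefore contain at least one endpoint of $I_0$). Since the elements of $\mathcal{I}_\varepsilon$ have pairwise disjoint interiors, class B has at most $2$ elements, so the bulk of the work lies in bounding class A.

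For class A, I will associate to each $I$ a horizontal depth-$(k+q)$ rectangle. Because $|I|\geq s_{k+q}$, the first item of Lemma~\ref{l:horizontal} furnishes a horizontal $R_{\omega_I}$ of depth $k+q$ with $|I\cap I_{\omega_I}|>0$; the inclusion $I\subset I_0$ then upgrades this to $I_{\omega_I}\cap I_0\neq\varnothing$, so that $R_{\omega_I}$ sits among the horizontal depth-$(k+q)$ rectangles whose $I$-projection meets $I_0$. To enumerate those, I apply the second item of Lemma~\ref{l:horizontal} at depth $m=k-2$ (valid since $|I_0|<s_{k-2}$) to obtain at most $3$ depth-$(k-2)$ rectangles meeting $I_0$; any depth-$(k+q)$ rectangle meeting $I_0$ must descend from one of them, and each depth-$(k-2)$ rectangle has $4^{q+2}$ depth-$(k+q)$ descendants, exactly half of which (those with odd last letter) are horizontal. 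Hence at most $3\cdot 2\cdot 4^{q+1}=6\cdot 4^{q+1}$ horizontal depth-$(k+q)$ rectangles meet $I_0$.

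The last ingredient is to show that at most $2$ distinct elements of $\mathcal{I}_\varepsilon$ can share the same associated $R_{\omega_I}$: horizontality together with~\eqref{e:s2} give $|J_{\omega_I}|\leq 2^{-(k+q)}s_{k+q}<s_{k+q}\leq s_{p_0}\leq\varepsilon'$ (using $k\geq p_0$), and since the $J$-images of the elements of $\mathcal{I}_\varepsilon$ partition $[0,1]$ into intervals of length $\varepsilon'>|J_{\omega_I}|$, the interval $J_{\omega_I}$ can meet at most two of them. Combining the three counts yields $|\text{class A}|\leq 2\cdot 6\cdot 4^{q+1}=12\cdot 4^{q+1}$, and therefore $|I_0\pitchfork\mathcal{L}_\varepsilon(k+q)|\leq 12\cdot 4^{q+1}+2\leq 16\cdot 4^{q+1}=4^{q+3}$.

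The single subtle point is the implication ``$I\subset I_0 \Rightarrow R_{\omega_I}$ meets $I_0$'', which is what enables the clean counting of the horizontal rectangles via descendants of depth-$(k-2)$ ancestors meeting $I_0$; this is also precisely the reason why class B cannot be treated the same way and must be bounded separately by the trivial tiling argument.
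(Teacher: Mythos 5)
Your proof is correct and follows essentially the same structure as the paper's: split off the at-most-two intervals not contained in $I_0$, associate a depth-$(k+q)$ rectangle to each remaining $I$ via Lemma~\ref{l:horizontal}, bound the number of candidate rectangles by counting descendants of the at-most-three depth-$(k-2)$ rectangles meeting $I_0$, and note that each rectangle can be shared by at most two intervals. The only (cosmetic, dual) difference is that you invoke item (i) of Lemma~\ref{l:horizontal} to get a \emph{horizontal} rectangle and argue the ``at most two'' step in the $J$-coordinate using $|J_{\omega}|<\varepsilon'$, whereas the paper invokes item (iii) to get a \emph{vertical} rectangle and argues in the $I$-coordinate using $|I_{\omega}|<s_{k+q}\le|I|$; both yield the same count $3\cdot 4^{q+2}+2\le 4^{q+3}$.
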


\begin{proof}
Since $\lvert I_0\rvert<s_{k-2}$, by Lemma \ref{l:horizontal} there are at most $3$ rectangles $R_\omega$ of depth $k-2$ such that $I_0\cap I_\omega\neq\varnothing$. Since each rectangle contains exactly $4$ rectangles of one more depth, there are at most $3\cdot4^{q+2}$ rectangles $R_\omega$ of depth $k+q$ such that $I_0\cap I_\omega\neq\varnothing$, among which at most $6 \cdot 4^{q+1}$ are vertical.

Let $I\in I_0\pitchfork \mathcal{L}_\varepsilon(k+q)$. Since $\lvert h(I)\rvert =\varepsilon'\ge s_{p_0}\ge s_{k+q}$, Lemma \ref{l:horizontal} ensures that there exists a vertical rectangle $R_\omega$ of depth $k+q$ such that $\lvert I\cap I_\omega\rvert>0$. Since $\lvert I_\omega\rvert < s_{k+q} \le \lvert I\rvert$, any such $I_\omega$ can meet at most $2$ of the elements of $\mathcal{L}_\varepsilon(k+q)$.

On one hand, since $\I_\varepsilon(k+q)$ is a tiling, at most two of the elements of $I_0\pitchfork \mathcal{L}_\varepsilon(k+q)$ are not contained in $I_0$, one containing each endpoint of $I_0$. On the other hand, given an element of $I_0\pitchfork \mathcal{L}_\varepsilon(k+q)$ contained in $I_0$ and a vertical rectangle $R_\omega$ of depth $k+q$ such that $\lvert I\cap I_\omega\rvert>0$, we have $I_0\cap I_\omega\neq\varnothing$. We thus have at most $3\cdot 4^{q+2}+2$ elements in $I_0\pitchfork \mathcal{L}_\varepsilon(k+q)$.
\end{proof}

\begin{coro} \label{coro:orbitBound}
For all $k\ge p_0$, $q \ge 0$, $j\in\llbracket 0,k\rrbracket$ and all $I\in \I_\varepsilon(k)$, 
\[\card\big(T^j(I)\pitchfork \mathcal{L}_\varepsilon(k+q)\big)\le 4^{q+3} \,. \]
\end{coro}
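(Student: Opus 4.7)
The plan is to derive Corollary \ref{coro:orbitBound} directly from Lemma \ref{lemm:orbitBound} applied with $I_0:=T^j(I)$, keeping the same $k$ and $q$. The lemma then delivers the bound $4^{q+3}$ provided we verify its size hypothesis $\lvert T^j(I)\rvert<s_{k-2}$, so the entire task reduces to propagating the size bound of $I$ through at most $k$ iterations of $T$ using the uniform continuity condition \eqref{e:s3}.

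For the bulk of the range $0\le j\le k-1$ this is immediate. Since $I\in\I_\varepsilon(k)$ we have $\lvert I\rvert<s_{k-1}$, and \eqref{e:s3} applied at index $k-1$ yields $\lvert T^j(I)\rvert<P_{k-1}$. A one-line computation bounds this:
\[
P_{k-1}=2^{-k-9}\prod_{i=1}^{k-2}s_i<2^{-k-9}s_{k-2}<s_{k-2},
\]
using that all $s_i<1$ and that $k\ge p_0$ can be assumed large (for small $\varepsilon$).

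The boundary case $j=k$ is slightly more delicate, because \eqref{e:s3} at index $k$ requires $\lvert I\rvert\le s_k$ while we only have $\lvert I\rvert<s_{k-1}$. To handle it, I would cover $I$ by $N:=\lceil\lvert I\rvert/s_k\rceil$ subintervals of length $\le s_k$, each of which has $T^k$-image of length $<P_k$ by \eqref{e:s3}. Since $T^k(I)$ is an interval (continuous image of a connected set in $\R$), its length is bounded by the sum of these images, so $\lvert T^k(I)\rvert<NP_k$. Checking that $NP_k<s_{k-2}$ reduces to an inequality of the shape $s_k\gtrsim 2^{-k}s_{k-1}^2\prod_{j=1}^{k-3}s_j$, which is compatible with the upper bound \eqref{e:s2} as long as $(s_n)$ decays fast enough, and can be secured by a mild strengthening of the inductive choice of $s_k$ in the construction of $h$.

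Once $\lvert T^j(I)\rvert<s_{k-2}$ is established uniformly in $j\in\llbracket 0,k\rrbracket$, Lemma \ref{lemm:orbitBound} applies and yields $\card(T^j(I)\pitchfork\L_\varepsilon(k+q))\le 4^{q+3}$. The main obstacle is the endpoint $j=k$: a single application of \eqref{e:s3} at a matching index is not available, and one must either run the covering argument above (which constrains the parameter sequence $(s_n)$) or, alternatively, apply the lemma with $(k-1,q+1)$ in place of $(k,q)$ using a two-step iterate bound, at the cost of weakening the constant from $4^{q+3}$ to $4^{q+4}$.
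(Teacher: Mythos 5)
Your overall plan is right: reduce to Lemma \ref{lemm:orbitBound} by showing $\lvert T^j(I)\rvert<s_{k-2}$ through \eqref{e:s3}, and your treatment of the range $0\le j\le k-1$ is essentially the intended argument. Since $\lvert I\rvert<s_{k-1}$, \eqref{e:s3} at index $k-1$ gives $\lvert T^j(I)\rvert<P_{k-1}$, and $P_{k-1}=2^{-k-9}\prod_{i=1}^{k-2}s_i<s_{k-2}$ because $2^{-k-9}\prod_{i=1}^{k-3}s_i<1$; then Lemma \ref{lemm:orbitBound} with $(k,q)$ gives $4^{q+3}$.

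The trouble is $j=k$, and neither of your two fixes actually proves the statement as written. Route (a), the covering argument, needs a \emph{lower} bound on $s_k$ of roughly $s_k\gtrsim 2^{-k}s_{k-1}^2\prod_{j\le k-3}s_j$, but the paper's construction only imposes an \emph{upper} bound $s_k<P_k$ and is otherwise free to take $s_k$ arbitrarily small; calling the added constraint a ``mild strengthening'' understates that you would be changing the construction the theorem is stated for. Route (b) as you note only yields $4^{q+4}$. Moreover, the obvious further attempt (iterate \eqref{e:s3} once more on $T^{k-1}(I)$) runs into the same wall: by \eqref{e:s2} one has $s_{k-1}<P_{k-1}$, so $T^{k-1}(I)$, of length $<P_{k-1}$, need not have length below $s_{k-1}$; all one can say is that its length is $<s_{k-2}$, so \eqref{e:s3} is then only available at index $k-2$, giving $\lvert T^k(I)\rvert<P_{k-2}$, and since $P_{k-2}>s_{k-2}$ this again feeds into Lemma \ref{lemm:orbitBound} only at level $k-1$ and produces $4^{q+4}$. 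In short, your proof delivers $4^{q+3}$ for $j\le k-1$ and $4^{q+4}$ for $j=k$; the weaker constant is harmless for the later use in the proof of Theorem \ref{t:vanishing} (where only the order of the exponent matters), but it does not establish the bound $4^{q+3}$ over the full range $j\in\llbracket 0,k\rrbracket$ claimed in the corollary.
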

This means that when coding the orbits of $T$ by the elements of $\mathcal{I}_\varepsilon$ they visits, visiting a small interval (in some $\mathcal{I}_\varepsilon(k)$ with large $k$) means there are only a limited number of ways to continue the orbit while not visiting even much smaller intervals. 

We now bound the maximal depth of intervals of $\mathcal{I}_\varepsilon$.
\begin{lemm} \label{lemm:depth}
Let $K_\ve:= \lceil \log_2 \frac1\ve \rceil+2$.
Then $\mathcal{I}_\varepsilon=\mathcal{L}_\varepsilon(K_\varepsilon)$, i.e. for all $k>K_\varepsilon$, $\mathcal{I}_\varepsilon(k)=\varnothing$.
\end{lemm}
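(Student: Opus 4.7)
The plan is to argue by contradiction. Suppose $I \in \mathcal{I}_\varepsilon(k)$ for some $k > K_\varepsilon$; by monotonicity of $(s_j)$ this gives $|I| < s_{k-1} \le s_{K_\varepsilon}$, while by definition of $\mathcal{I}_\varepsilon$ we have $|h(I)| = \varepsilon'$. My aim is to derive the strict upper bound $|h(I)| < \varepsilon/2$, which contradicts $|h(I)| = \varepsilon'$ since an elementary check shows $\varepsilon' = 1/(\lfloor 1/\varepsilon \rfloor + 1) > \varepsilon/2$ for every $\varepsilon\in(0,1)$.

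The key observation is a refinement of item 2 of Lemma \ref{l:horizontal}: not only does $I$ meet at most three rectangles at depth $K_\varepsilon$, but these must sit in a horizontal--vertical--horizontal pattern along the chain. Indeed, the argument used in the proof of that lemma shows that any horizontal rectangle at depth $K_\varepsilon$ has $I$-width greater than $2^9 s_{K_\varepsilon} > |I|$, so $I$ cannot span an entire horizontal; hence $I$ meets at most two horizontals. By the same token $I$ meets at most one vertical, since two vertical rectangles in the chain are necessarily separated by a horizontal that $I$ cannot cross. Because the chain alternates between horizontal and vertical rectangles, the intersection pattern is a sub-pattern of H--V--H.

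With this refinement in hand the computation is routine: the explicit formulas for $|J_\omega|$ yield $|J_\omega| \le 2^{-K_\varepsilon}$ for vertical $\omega$ of depth $K_\varepsilon$, and $|J_\omega| \le 2^{-K_\varepsilon} s_{K_\varepsilon}$ for horizontal $\omega$. Summing over (at most) one vertical and two horizontal consecutive $J_\omega$'s that contain $h(I)$ yields $|h(I)| \le 2^{-K_\varepsilon}(1 + 2 s_{K_\varepsilon})$. The definition of $K_\varepsilon$ gives $2^{-K_\varepsilon} \le \varepsilon/4$, and hypothesis \eqref{e:s2} gives $s_{K_\varepsilon} \le s_1 < 2^{-11}$; combining, $|h(I)| < (\varepsilon/4)(1 + 2^{-10}) < \varepsilon/2$, the desired contradiction.

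The only subtlety is that one cannot simply apply item 2 of Lemma \ref{l:horizontal} as a black box: a vertical--horizontal--vertical arrangement of three rectangles would yield $|h(I)| \le 2 \cdot 2^{-K_\varepsilon} + 2^{-K_\varepsilon} s_{K_\varepsilon}$, not sharp enough to beat $\varepsilon/2$. Fortunately the V--H--V pattern is precisely what the proof of that lemma rules out, so the H--V--H refinement is obtained at no extra cost.
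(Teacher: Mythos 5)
Your proof is correct, and it takes a genuinely different route from the paper's. The paper argues directly: since every depth-$K_\varepsilon$ rectangle has $\lvert J_\omega\rvert < 2^{-K_\varepsilon} \le \varepsilon'/2 = \frac12\lvert h(I)\rvert$, at least one \emph{horizontal} $J_\omega$ must sit entirely inside $h(I)$; then $I_\omega\subset I$ and the width bound $\lvert I_\omega\rvert > s_{K_\varepsilon}$ for horizontal rectangles gives $\lvert I\rvert > s_{K_\varepsilon}$ directly. You instead argue by contradiction: assuming $\lvert I\rvert<s_{K_\varepsilon}$, you show $I$ fits inside an H--V--H triple at depth $K_\varepsilon$ and bound $\lvert h(I)\rvert$ from above by the sum of the corresponding $\lvert J_\omega\rvert$, deriving $\lvert h(I)\rvert<\varepsilon/2<\varepsilon'$. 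Both arguments exploit the same asymmetry (horizontal rectangles are wide in $I$ but short in $J$) and the alternating structure of the chain, but in opposite directions: the paper finds something large inside $I$, you cover $h(I)$ by something small. Your H--V--H refinement is indeed already present in the paper's proof of item (ii) of Lemma~\ref{l:horizontal} (``at most two successive horizontal rectangles and the vertical rectangle between them''), so you are not adding a new fact, just reading that proof as a statement about the shape of the triple rather than merely its cardinality. One small remark: you present the H--V--H refinement as essential to beat the threshold $\varepsilon/2$, but there is actually more slack than that, since $\varepsilon' \ge \varepsilon/(1+\varepsilon)$ is much closer to $\varepsilon$ than to $\varepsilon/2$ for small $\varepsilon$; still, your version of the threshold argument is self-contained and correct as written.
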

\begin{proof}
Let $I\in \mathcal{I}_\varepsilon$. For every $\omega$ of depth $K_\ve$,
\[ \lvert J_\omega\rvert < 2^{-K_\varepsilon} \le \frac{\ve}{4} \le \frac{\varepsilon'}{2} = \frac12 \lvert h(I) \rvert \,,\]
and if $R_\omega$ is horizontal, $\lvert J_\omega\rvert \ll \frac12 \lvert h(I) \rvert$.
There must thus be an horizontal rectangle $R_\omega$ such that
\[
J_{\omega} \subset h(I) \,.
\]
Indeed, consider $\omega_1$ such that $J_{\omega_1}$ contains the minimum of $h(I)$; if $R_{\omega_1}$ is vertical, take $\omega$ the next word, otherwise the subsequent one.

Now $|I_{\omega}| > s_{K_\ve}$ and $I_{\omega'} \subset I$, therefore $I \in \L_\ve(K_\ve)$.
\end{proof}

\subsection{End of the proof of Theorem \ref{t:vanishing}      }

Let $T:[0,1] \to [0,1]$ be continuous. There exists a decreasing sequence $s$ satisfying conditions \eqref{e:s1}, \eqref{e:s2} and \eqref{e:s3}. We consider the homeomorphism $h:=h_s$ of $[0,1]$ given by Lemma \ref{lemm:h}. We use the metric $d:=d_h$, given for all $x,y \in [0,1]$ by
$d(x,y)=|h(x)-h(y)|$, and we have to show that 
\[
\lim_{\ve\to 0} \lim_{n\to\infty} \frac{\log N(d,T,\varepsilon,n)}{n \log \frac1\ve} = 0 \,,
\]
where $N(d,T,\varepsilon,n)$ is the largest possible cardinal of a $(n,\varepsilon)$-separated set.

Let us fix $0 < \ve \ll 1$ for now, and as above set $\varepsilon'=1/(\lfloor \frac1\varepsilon\rfloor +1)$  and $p_0=p_0(\ve)$ such that $\ve' \in [s_{p_0}, s_{p_0-1})$. We may assume that $p_0>3$ by taking $\varepsilon$ small enough. Condition \eqref{e:s2} and $s_j<\frac12$ ensure the (very conservative) estimate 
\begin{equation} \label{equ:n2}
\log_2 \frac1\varepsilon \ge \log_2\frac{1}{\varepsilon'} -1 \ge (p_0+9)(p_0-2)-1 > p_0^2 \,. 
\end{equation}

Define a vertex-labeled oriented graph $G=(V,E,\delta)$ by the vertex set $V=\mathcal{I}_\varepsilon$, the edge set $E$ of all $(I,I')$ such that $T(I)\cap I'\neq \varnothing$, and the labeling map $\ell:V\to \mathbb{N}$ such that $I\in \mathcal{I}_\varepsilon(\ell(I))$ for all $I\in\mathcal{I}_\varepsilon$. For each $n\in\mathbb{N}$, let $G^n$ be the set of oriented paths in $G$ of length $n$. It provides an $(\varepsilon,n)$-cover of $[0,1]$ for the metric $d$: by associating to a path $\alpha=(I_0,\dots,I_{n-1})$ the set $O(\alpha)=\cap_j T^{-j}(I_j)$, for all $x\in[0,1]$ there is $\alpha\in G^n$ such that $x\in O(\alpha)$, and by construction each $O(\alpha)$ has diameter at most $\varepsilon$ for the dynamical metric $d_n$. 
As is well-known, $N(d_h,T,\varepsilon,n)$ is bounded above by $\card G^n$: no two elements of any given $(\varepsilon,n)$-separated set can lay on the same element of any given $(\varepsilon,n)$-cover.

For $\al=(I_t)_{0\le t<n} \in G^n$, we consider its ``depth/label sequence''
\[L(\al):=(\ell(I_t))_{0\le t<n} \,,\]
and for each $L=(\ell_t)_{0\le t< n}\in\llbracket 1, K_\varepsilon \rrbracket^{n}$ we denote by $A_L$ the set of paths $\alpha\in G^n$ such that $L(\alpha)=L$. Thanks to Lemma \ref{lemm:depth}, it is sufficient to estimate the cardinal of the $A_L$ since
\begin{align*}
\log_2 \card G^n &= \log_2 \sum_{L\in \llbracket 1, K_\varepsilon \rrbracket^{n}} \card A_L \\
 &\le n\log_2(K_\varepsilon) + \max \big\{\log_2(\card A_L) \colon L\in \llbracket 1, K_\varepsilon \rrbracket^{n} \big\} \,,\\
\frac{\log_2 \card G^n}{n\log_2 \frac1\varepsilon} &\le \underbrace{\frac{\log_2(\log_2\frac1\varepsilon+3)}{\log_2 \frac1\varepsilon}}_{\to 0}+ \frac{\max \big\{\log_2(\card A_L) \colon L\in \llbracket 1, K_\varepsilon \rrbracket^{n} \big\}}{n\log_2 \frac1\varepsilon} \,.
\end{align*}

Fix any $L=(\ell_0,\dots,\ell_{n-1}) \in \llbracket 1,K_\ve \rrbracket^{n}$. The cardinality of $A_L$ is easy to bound in some specific cases:
\begin{enumerate}
\item\label{enumi:small} If the depth stays below $p_0$, i.e. $L \in \llbracket 1,p_0 \rrbracket^{n}$: then by Lemma \ref{lemm:cardL} and inequality \eqref{equ:n2},
\[
\frac{\log_2 \card A_L}{n} \le \log_2 4^{p_0} \le 2 \sqrt{\log_2 \frac1\ve} \,.
\]
\item If the depth starts above $p_0$ but does not raises above this starting value and the run is short enough, i.e. if $L\in \{k\}\times \llbracket 1,k \rrbracket^{n-1}$ for some $k \in\llbracket p_0,K_\varepsilon\rrbracket$ and $n\le k$: then By Lemma \ref{lemm:cardL} and Corollary \ref{coro:orbitBound}, 
\[
\frac{\log_2 \card A_L}{n} \le 2 \frac{k+3n}{n} \,,
\] 
which will be small if $n$ is equal (or close to) $k$.
\item\label{enumi:block} More generally, if the depth increases several times above the previous extremal value within the time window given by Corollary \ref{coro:orbitBound}, i.e. if $L\in \prod_{i=1}^q \{k_i\}\times \llbracket 1,k_i \rrbracket^{n_i-1}$ for some $p_0<k_1<\dots<k_q\le K_\varepsilon$, $(n_i)_{1\le i\le q}$ with $n_i\le k_i$ for all $i$ and $n=\sum_i n_i\ge k_q$, then with the convention $k_0=0$:
\begin{align*}
\frac{\log_2 \card A_L}{n} 
  &\le \frac{1}{n} \log_2\Big(\prod_{i=1}^{q} 4^{k_i-k_{i-1}+3}\cdot 4^{3 (n_i-1)}  \Big)\\
  &\le \frac2n \sum_i (k_i-k_{i-1}+3n_i) \\
  &\le \frac{2 k_q}{k_q}+6 = 8 < 2\sqrt{\log_2\frac1\varepsilon} \,,
\end{align*}
\end{enumerate}
as soon as $\varepsilon$ is small enough.
We now cut $L$ into blocks for which case \ref{enumi:small} or \ref{enumi:block} applies (except possibly for the last one, which can be too short for case \ref{enumi:block}). Define $t_0=0$ and recursively:
\begin{itemize}
\item if $\ell_{t_i}\le p_0$, $t_{i+1}$ is the first time after $t_i$ at which $\ell_{t_{i+1}}>p_0$ (so that $L_i:=(\ell_j)_{t_i\le j<t_i+1}$ falls into case \ref{enumi:small} above),
\item if $\ell_{t_i}>p_0$, $t_{i+1}$ is the largest integer after $t_i$ such that $L_i$ belongs to case \ref{enumi:block} above,
\end{itemize}   
until no such time exists, at which point we set $t_{i+1}=n$ and $i+1=:m$.
We obtain a sequence of integers $0=t_0 < t_1 < t_2 < \dots < t_m=n$ such that for each $i\in\llbracket 0,m-2\rrbracket$, the subsequence $L_i$ satisfies
\[\frac{\log_2\card A_{L_i}}{(t_{i+1}-t_i)} \le 2\sqrt{\log_2\frac1\varepsilon} \,.\]
By taking $n>K_\varepsilon^4$, we ensure that $t_m-t_{m-1}\le \sqrt{n}$ so that 
\[\frac{\log_2 \card A_{L_{m-1}}}{n} \le \frac{\log K_\varepsilon}{\sqrt{n}} \,.\]
Then finally, recalling the definition $K_\ve= \lceil \log_2 \frac1\ve \rceil+2$, we have
\begin{align*}
\frac{\log \card A_L}{n} &= \frac1n \sum_{i=1}^{m-1} \log \card A_{L_i} \\
  &\le \frac{\log \card A_{L_m}}{n} + \frac{1}{t_{m-1}} \sum_{i=0}^{m-2} \log \card A_{L(t_i)} \\
  &\le \frac{\log  K_\varepsilon}{\sqrt{n}} +
   \max_{0\le i\le m-2}\Big\{ \frac{\log \card A_{L_i}}{t_{i+1}-t_{i}}  \Big\} \,,\\
\limsup_{n \to \infty} \frac{\log \card A_L}{n\log_2\frac1\varepsilon}  &\le \frac{2}{\sqrt{\log_2\frac1\varepsilon}} \to 0 \quad(\text{as }\varepsilon\to0).
\end{align*}

\bibliographystyle{alpha}
\bibliography{rough}

\end{document}